\newcommand\stypath{.}
\newcommand{\pd}[2]{\frac{\partial#1}{\partial#2}}
\title{Existence and Uniqueness of Solutions of the Koopman--von Neumann Equation on Bounded Domains}
\author[1,2]{Marian Stengl\thanks{corresponding author, e-mail address: \texttt{stengl@math.tu-berlin.de}}}
\author[1,3]{Patrick Gel\ss}
\author[4]{Stefan Klus}
\author[1,2]{Sebastian Pokutta}
\affil[1]{AI in Society, Science, and Technology, Zuse Institute Berlin, Berlin 14195, Germany}
\affil[2]{Institute of Mathematics, Technische Universit\"at Berlin, Berlin 10623, Germany}
\affil[3]{Institute of Mathematics, Freie Universit\"at Berlin, Berlin 14195, Germany}
\affil[f]{School of Mathematical \& Computer Sciences, Heriot--Watt University, Edinburgh EH14 4AS, UK}
\begin{document}
%
%
\maketitle%
\if0
\todo[inline]{%
\textbf{possible journals:}\\[0.25cm]
Elsevier: \textit{Journal of Mathematical Analysis and Applications, Journal of Differential Equations, Journal of Functional Analysis}\\[0.25cm]
Springer: \textit{Journal of Evolution Equations, Journal of Dynamics and Differential Equations, Quantum Studies: Mathematics and Foundations}\\[0.25cm]
Wiley: \textit{Mathematical Methods in the Applied Sciences}\\[0.25cm]
Taylor and Francis: \textit{Journal of Computational and Theoretical Transport, Numerical Functional Analysis and Optimization, Dynamical Systems}\\[0.25cm]
AIP Publishing: \textit{Journal of Mathematical Physics}\\[0.25cm]
IOP SCIENCE: \textit{Journal of Physics A: Mathematical and Theoretical}\\[0.25cm]
SIAM: \textit{Journal on Mathematical Analysis}
\\[0.5cm]
1. Springer: JoEE
2. IOP: JoPA
3. AIP: JoMP
4. Elsevier: JoDE
}
\fi
%
%
%

\abstract{%
	The Koopman--von Neumann equation describes the
	evolution of a complex-valued wavefunction corresponding to the probability distribution given by an associated classical Liouville equation. Typically, it is defined on the whole Euclidean space.
	The investigation of bounded domains, particularly in practical scenarios involving quantum-based simulations of dynamical systems, has received little attention so far.
	We consider the Koopman--von Neumann equation associated with an ordinary differential equation on a bounded domain whose trajectories are contained in the set's closure.
	Our main results are the construction of a strongly continuous semigroup together with the existence and uniqueness of solutions of the associated initial value problem.
	To this end, a functional-analytic framework connected to Sobolev spaces is proposed and analyzed.
	Moreover, the connection of the Koopman--von Neumann framework to transport equations is highlighted.
}
~\\[0.1cm]
\noindent \begin{minipage}[t]{0.15\linewidth}
	\textbf{Keywords:}
\end{minipage}%
\begin{minipage}[t]{0.85\linewidth}
	dynamical systems, transfer operators, evolution equations, Koopman--von Neumann mechanics, Perron--Frobenius--Sobolev space
\end{minipage}

~\\

\noindent \begin{minipage}[t]{0.15\linewidth}
	\textbf{MSC:}
\end{minipage}%
\begin{minipage}[t]{0.85\linewidth}
	35A05, 35F10, 37C30, 46E35, 47D06
\end{minipage}






%
%
%
\section{Introduction}\label{sec:intro}
%
%
Quantum computing has the potential to enhance the way information is processed and enables us to solve problems that are beyond the capabilities of classical computers.
Since the introduction of this paradigm by Benioff~\cite{Benioff1980}, Manin~\cite{Manin1980}, and in particular Feynman~\cite{Feynman1982} in the early 1980s, the interest in quantum computation and simulation has been growing continuously.
Recent years have seen rapid progress not only in terms of technical realizations but also by opening up new applications including cryptography~\cite{Pirandola2020, Portmann2022}, financial modeling~\cite{Coyle2021, Stamatopoulos2022}, materials science~\cite{Bauer2020, Schlawin2022}, and machine learning~\cite{Zhang2020, Cerezo2022}.

%
%
Quantum computers are also of interest to simulate dynamical systems modeled by ordinary differential equations (ODEs).
This problem class covers a variety of complex physical, chemical, and biological systems.
As quantum algorithms are written in terms of unitary operations, the realization of such an algorithm is a challenging task.
While the solution of high-dimensional systems of \emph{linear} ODEs has been extensively studied and various quantum algorithms have been developed in the past decades~\cite{Lloyd1996, Aharonov2003, Berry2007, Harrow2009, Okounkov2010, Berry2017, Childs2019, Zanger2021}, the simulation of \emph{nonlinear} ODEs is a more challenging task that has only been addressed recently~\cite{Kyriienko2021, Liu2021, Xue2021, Krovi2023, Shukla2023}.

%
%
One possible approach towards the analysis of dynamical systems is the use of \emph{transfer operators}.
In this context, the Perron--Frobenius and the Koopman operators~\cite{Ko31, LaMa94, DJ99, Mezic05, Giannakis2022} are of particular interest, as they enable the analysis of the global behavior of complex dynamical systems.
Instead of solving nonlinear ODEs through numerical integration, transfer operators describe how probability densities and observables, respectively, are propagated forward in time by using linear and infinite-dimensional operators and their corresponding generators.
Transfer operator theory has been successfully applied in various research areas including fluid dynamics \cite{Mezic05, RMBSH09}, molecular dynamics \cite{Sch99, NKPMN14, KKS16, SKH23}, and ergodic theory~\cite{Ko31, Fiedler2001, EFHN2001, AM2017, GIANNAKIS2019}.
It lays the foundation for a wide range of data-driven methods which can be used, e.g., for the detection of metastable or coherent sets as well as model reduction~\cite{KKS16, KNKWKSN18, KoMe18, KNPNCS20, KKB21}, but also for control~\cite{KM18a, PK19, POR20, KKB21, KNP22}.

%
%
For the operator-based numerical simulation and analysis of dynamical systems on a quantum computer, the \emph{Koopman--von Neumann} (KvN) framework is of particular interest.
In this formulation, the conservation of the probability distribution, expressed by the classical Liouville equation, is recast as a Schrödinger equation on a Hilbert space.
The solution of this equation yields a complex-valued wave function $\psi$, whose propagation is described by a semigroup of unitary operators.
Using Born's rule, one can extract the probability density $\rho = |\psi|^2$.
This probability density then satisfies the Liouville equation, see \cite{Joseph20,bib:BogdanovBognanova,bib:BoganovBogdanovaFastovetsLukichev}.
In its original formulation, Koopman and von Neumann only considered the case of Hamiltonian dynamics~\cite{Ko31, bib:vonNeumann_Koopman}, but it has been extended to general dynamical systems, see \cite{Joseph20,bib:BogdanovBognanova,bib:BoganovBogdanovaFastovetsLukichev}.
The KvN framework thus establishes a connection between classical mechanics and quantum mechanics.
See also \cite{LLASS22} for a detailed comparison of the different operators and their properties.
If the KvN Hamiltonian is sparse, the quantum-based simulation is exponentially more efficient than the Euler discretization of the Liouville equation~\cite{Joseph20}.
All these considerations motivate an in-depth analysis of the mathematical theory behind the KvN equation.

%
%
\revb{%
The primary goal of this paper is to mathematically investigate the KvN framework.
We focus on dynamical systems with trajectories fully contained within a bounded, closed, and sufficiently regular set for all times.
For these systems, we will derive a rigorous existence theory for the KvN equation.
The idea behind this paper has grown from two aspects:
\begin{enumerate}
    \item There was in our opinion a gap in both, the mathematical and physical literature, regarding the existence of solutions. This needed to be addressed very early to lay a mathematical foundation.
    \item All the literature we found was taking into account the Koopman--von Neumann framework on the whole space $\bR^d$. However, in particular with regard to quantum computing bounded domains are of interest as an unbounded domain is hard to discretize.
\end{enumerate}
The key requirement for the existence of solutions is the no-outflow condition on the right-hand side of the given ODE.
This condition ensures that trajectories remain within the bounded domain for all times.

Recognizing the dual interest from both communities, we provide a comprehensive collection of the tools used in this study.%
}{}
To this end, key concepts such as semigroup theory, function spaces, transfer operators and their connections to the KvN framework will be introduced in detail and many additional references are provided for the interested reader.
We will in particular highlight the relationship with the transport equation literature.
%
%

The rest of this paper is organized as follows:
In \cref{sec:np} we will introduce selected tools from functional analysis including semigroup theory in \cref{ssec:semigroup}, function spaces in \cref{ssec:function_spaces}, and transfer operators in \cref{ssec:derivation_kvn}.
A function space framework related to Sobolev spaces will be introduced and analyzed in \cref{sec:pfs}. In \cref{sec:existence_kvn} these results will be used to derive the existence of a strongly continuous semigroup associated with the KvN generator and hence to prove the existence and uniqueness of solutions.

%
%
\section{Notation and Preliminaries}\label{sec:np}
For the following abstract notions in functional analysis, we refer to \cite{bib:AltNuernberg}.
Let $X$ be a real (or complex) Banach space.
Its \emph{(topological) dual space} is denoted by $X^*$ and is defined as the set of all bounded linear operators taking values in $\bR$ or $\bC$.
The \emph{dual pairing} of elements $x^* \in X^*$ and $x \in X$ is defined by $\langle x^*, x \rangle_{X^*, X} := x^*(x)$.
If the corresponding spaces are clear from the context, we may just write $\langle x^*, x \rangle$.
Let $Y$ be another (real or complex) Banach space.
The set of all bounded linear operator from $X$ to $Y$ is denoted by $\cL(X,Y)$ and forms a Banach space when equipped with the \emph{operator norm}
\begin{eq*}
	\|A\|_{\cL(X,Y)} := \sup_{\substack{x \in X \\ \|x\|_X = 1}} \|A x\|_Y
\end{eq*}
for $A \in \cL(X,Y)$.
Clearly, we have $X^* = \cL(X,\bR)$ for real, and $X^* = \cL(X,\bC)$ for complex Banach spaces.
If $Y = X$ we will sometimes simply write $\cL(X) := \cL(X,X)$.
The identity on $X$ is denoted by $\id_X$.
The \emph{dual} of an operator $A \in \cL(X,Y)$ is the uniquely determined operator $A^* \in \cL(Y^*, X^*)$ with
\begin{eq*}
	\langle A^* y^* , x \rangle_{X^*, X} = \langle y^*, Ax \rangle_{Y^*,Y}, \quad \text{for all~} x \in X, y^*\in Y^*.
\end{eq*}

Let $Y \subseteq X$ be a subspace equipped with its own norm $\|\cdot\|_Y$.
If $(Y, \|\cdot\|_Y)$ is a real (respectively complex) Banach space and there exists a constant $C > 0$ with $\|y\|_X \leq C \|y\|_Y$ for all $y \in Y$, then we say that $Y$ \emph{embeds continuously} into $X$ and we write $Y \embeds X$.

Finally, let $H$ be a complex Hilbert space with scalar product $(\cdot,\cdot)_H$.
Then, according to the Riesz representation theorem~\cite[Section 6.1]{bib:AltNuernberg}, the canonical map $\Lambda \colon H \to H^*$ defined by ${u \mapsto (v \mapsto (u,v)_H)}$ is an isometric, conjugate linear map, i.e., $\Lambda(u_0 + u_1) = \Lambda u_0 + \Lambda u_1$ and $\Lambda(\alpha u) = \bar\alpha \Lambda u$ for all $u, u_0, u_1 \in H$ and $\alpha \in \bC$.
%
%
\subsection{Semigroup Theory}\label{ssec:semigroup}
\revb{%
In this section, we introduce the concepts and results from semigroup theory used in the main part of the article.
For more details, see \cite{bib:Pazy_Semigroup,bib:EngelNagel}.
Our primary goal is to prove the existence of solutions for the KvN equation, which can be classified as a linear evolution equation.
The mathematical framework for deriving the existence of solutions is \emph{semigroup theory}.
For finite-dimensional linear ODEs, solutions are characterized using the matrix exponential.
Semigroups generalize this concept to linear operators on infinite-dimensional Banach spaces.

}%
A \emph{one-parameter semigroup} (on $X$) or just \emph{semigroup} is a family of operators $(T(t))_{t \geq 0} \subseteq \cL(X)$ such that the following properties hold:
\begin{enum}
	\item $T(0) = \id_X$,
	\item $T(s + t) = T(s)T(t)$ for all $s, t \geq 0$.
\end{enum}
A semigroup is called a \emph{$C_0$-semigroup} if for all $x \in X$ the mapping $t \mapsto T(t)x$ is continuous on $[0,\infty)$ and it is called a semigroup of \emph{contractions}, if it holds
that $\|T(t)\|_{\cL(X)} \leq 1$ for all $t \geq 0$.
The \emph{(infinitesimal) generator} of $(T(t))_{t \geq 0}$ is defined by
\begin{eq*}
	Ax := \lim_{t \searrow 0} \frac{1}{t} (T(t) - \id_X)x
\end{eq*}
for all $x \in \cD(A)$ with
\begin{eq*}
	\cD(A) := \left\{ x \in X : \lim_{t \searrow 0} \frac{1}{t} (T(t) - \id_X)x \text{~exists} \right\}.
\end{eq*}
This set is called the \emph{domain} of $A$.
In general, $\cD(A)$ is not a closed subspace of $X$ and $A$ is an \emph{unbounded operator}.
Such an operator is called \emph{closed}, if its graph
\begin{eq*}
	\gph{A} := \{ (x,y) \in X \times X : x \in \cD(A) \text{~and~} y = Ax \}
\end{eq*}
is a closed subset of $X \times X$.
If $A \colon \cD(A) \subseteq X \to X$ is a closed operator, then the space $\cD(A)$ equipped with the \emph{graph norm}
\begin{eq*}
	\|x\|_A := \|x\|_X + \|A x\|_X
\end{eq*}
is a Banach space and $A \in \cL(\cD(A), X)$ is a bounded linear operator.
For a complex Hilbert space $H$, an operator $A \colon \cD(A) \subseteq H \to H$ is called \emph{dissipative}, if all $u \in \cD(A)$ satisfy $\re (Au, u)_H \leq 0$.

Consider the following \emph{Cauchy problem}
\begin{eq}\label{eq:evo}
	\partial_t u &= A u \text{~for all~} t \geq 0,\\
	u(0) &= u_0.
\end{eq}
Of particular interest for us is the following result.
\begin{thm}[Solution of the Cauchy problem]\label{thm:evo_existence_uniqueness}
	Let $A \colon \cD(A) \subseteq X \to X$ be the infinitesimal generator of a $C_0$-semigroup $(T(t))_{t \geq 0}$ with $T(t) \in \cL(X)$ for all $t \geq 0$, then the mapping $u\colon t \mapsto T(t)u_0$ is the unique solution $u \in C^1([0,\infty), X) \cap C([0,\infty), \cD(A))$ of \cref{eq:evo}.
\end{thm}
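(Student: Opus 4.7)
The plan is to follow the classical Hille--Yosida style argument. First I would observe that the regularity class $C([0,\infty),\mathcal{D}(A))$ forces $u_0=u(0)\in\mathcal{D}(A)$, so the standing assumption throughout the proof is $u_0\in\mathcal{D}(A)$. The key preparatory fact I would establish is that $\mathcal{D}(A)$ is invariant under the semigroup and that $A$ commutes with $T(t)$ on its domain, i.e.\ $T(t)\mathcal{D}(A)\subseteq\mathcal{D}(A)$ and $AT(t)u_0=T(t)Au_0$ for all $u_0\in\mathcal{D}(A)$. This follows from writing $\tfrac{1}{h}(T(h)-\id_X)T(t)u_0=T(t)\tfrac{1}{h}(T(h)-\id_X)u_0$ and passing to the limit $h\searrow 0$ using boundedness of $T(t)$ and the definition of the generator.

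For existence I would prove differentiability of $u(t)=T(t)u_0$ separately from the right and the left. The right derivative is immediate: $\tfrac{1}{h}(T(t+h)-T(t))u_0=T(t)\tfrac{1}{h}(T(h)-\id_X)u_0\to T(t)Au_0$. For the left derivative (at $t>0$) I would write
\begin{eq*}
	\tfrac{1}{h}\bigl(T(t)-T(t-h)\bigr)u_0-T(t)Au_0=T(t-h)\!\left[\tfrac{1}{h}(T(h)-\id_X)u_0-Au_0\right]+\bigl(T(t-h)-T(t)\bigr)Au_0,
\end{eq*}
and conclude via strong continuity of $T$ together with the standard fact that $\|T(s)\|_{\cL(X)}$ is bounded on any compact interval (a consequence of strong continuity and the uniform boundedness principle). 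Thus $u'(t)=AT(t)u_0=T(t)Au_0$ exists everywhere. Because $t\mapsto T(t)Au_0$ is continuous (by the $C_0$-property applied to $Au_0\in X$), we obtain $u\in C^1([0,\infty),X)$, and since $\|u(t)\|_A=\|T(t)u_0\|_X+\|T(t)Au_0\|_X$ depends continuously on $t$, also $u\in C([0,\infty),\mathcal{D}(A))$.

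For uniqueness I would use the classical Duhamel-type trick: given any solution $v$ of \cref{eq:evo} in the stated regularity class, fix $t>0$ and set $w(s):=T(t-s)v(s)$ on $[0,t]$. The goal is to show $w'\equiv 0$. The subtle point is that while $T(\cdot)$ is only strongly differentiable on $\mathcal{D}(A)$, here $v(s)\in\mathcal{D}(A)$ for every $s$ by assumption, so for small $h$
\begin{eq*}
	\tfrac{1}{h}\bigl(w(s+h)-w(s)\bigr)=T(t-s-h)\tfrac{1}{h}\bigl(v(s+h)-v(s)\bigr)-T(t-s-h)\tfrac{1}{h}\bigl(T(h)-\id_X\bigr)v(s).
\end{eq*}
Letting $h\to 0$, using local boundedness of $\|T(\cdot)\|_{\cL(X)}$, the differentiability of $v$, and the generator property applied to $v(s)\in\mathcal{D}(A)$, yields $w'(s)=T(t-s)v'(s)-T(t-s)Av(s)=T(t-s)(Av(s)-Av(s))=0$. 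Hence $w$ is constant, giving $v(t)=w(t)=w(0)=T(t)u_0$.

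The main obstacle is the uniqueness step: one must carefully justify the product-rule computation for $w(s)=T(t-s)v(s)$ even though $T$ is not differentiable in the operator norm and $A$ is unbounded. The argument hinges on the hypothesis $v(s)\in\mathcal{D}(A)$ for all $s$, which lets us apply the generator identity pointwise, combined with uniform boundedness of $T(\cdot)$ on $[0,t]$ to pass the limit through. The existence half is essentially a routine but careful limit computation once the invariance $T(t)\mathcal{D}(A)\subseteq\mathcal{D}(A)$ is in hand.
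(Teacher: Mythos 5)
Your proposal is correct, and it is essentially the classical argument that the paper itself does not spell out but delegates to the cited references (Pazy, Chapter 4, Theorem 1.3, and Lorenzi--Rhandi): differentiability of the orbit $t \mapsto T(t)u_0$ for $u_0 \in \cD(A)$ via the commutation $AT(t)u_0 = T(t)Au_0$, and uniqueness via $w(s) = T(t-s)v(s)$. Two cosmetic repairs: continuity into $\cD(A)$ should be deduced from $\|u(t)-u(s)\|_A = \|T(t)u_0 - T(s)u_0\|_X + \|T(t)Au_0 - T(s)Au_0\|_X \to 0$ (continuity of $t \mapsto \|u(t)\|_A$ alone does not give continuity of the map), and your difference quotient for $w$ only makes sense for $h>0$, so either treat $h<0$ separately by writing $T(t-s-h) = T(t-s)T(-h)$ or invoke the standard fact that a continuous function with vanishing right derivative on an interval is constant.
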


For more details, see, e.g., \cite[Chapter 4, Theorem 1.3]{bib:Pazy_Semigroup} as well as	\cite[Proposition 2.2.2(i)]{bib:LorenziRhandi_Semigroup}. The existence and uniqueness of solutions of \cref{eq:evo} can thus be guaranteed if the existence of a $C_0$-semigroup for a $A \colon \cD(A) \subseteq X \to X$ can be established.
There are several results that guarantee the existence of a $C_0$-semigroup for a given unbounded operator $A$ with domain $\cD(A)$, e.g., the Hille--Yosida theorem~\cite[Chapter 2, Generation Theorem 3.5]{bib:EngelNagel} and Lumer--Phillips theorem~\cite[Chapter 2, Theorem 3.15]{bib:EngelNagel}.
For our case, the following corollary suffices.
%
%
\begin{cor}[Corollary of Lumer--Phillips Theorem, see \mbox{\cite[Corollary 4.4]{bib:Pazy_Semigroup}}]\label{cor:lumer_phillips_cor}
	Let $H$ be a complex Hilbert space and $A$ a densely defined closed linear operator on $H$.
	If both $A$ and $A^*$ are dissipative, then $A$ is the infinitesimal generator of a $C_0$-semigroup of contractions on $H$.
\end{cor}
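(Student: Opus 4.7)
The plan is to apply the Lumer--Phillips theorem in its surjectivity formulation: a densely defined dissipative operator on a Hilbert space generates a $C_0$-semigroup of contractions as soon as $\id_H - A$ has range equal to $H$. Since $A$ is already assumed densely defined, closed, and dissipative, the entire argument reduces to proving this range condition. I would split the verification into two pieces, namely closedness and density of the range of $\id_H - A$.

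For closedness I would extract from the dissipativity of $A$ the coercive estimate $\|(\id_H - A)x\|_H \geq \|x\|_H$ for all $x \in \cD(A)$, which follows by expanding the squared norm and invoking $\re(Ax, x)_H \leq 0$. Combined with the closedness of $A$, this bound makes the range of $\id_H - A$ closed by a routine Cauchy-sequence argument: for any $y_n = x_n - A x_n$ convergent in $H$, the estimate forces $(x_n)$ to be Cauchy, its limit lies in $\cD(A)$ because $A$ is closed, and the identity passes to the limit.

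For density I would show that the orthogonal complement of the range is trivial. Any $y$ orthogonal to the range of $\id_H - A$ satisfies $(y, x - Ax)_H = 0$ for every $x \in \cD(A)$, i.e., $(y, Ax)_H = (y, x)_H$. By the very definition of the Hilbert-space adjoint, this places $y$ in $\cD(A^*)$ with $A^* y = y$. Testing the dissipativity hypothesis on $A^*$ at the vector $y$ then yields $\|y\|_H^2 = \re(A^* y, y)_H \leq 0$, forcing $y = 0$. Combining closedness with density delivers the required surjectivity, and Lumer--Phillips produces the semigroup. The main (modest) obstacle I foresee is being careful with the sesquilinearity convention when identifying the adjoint action on $y$; the rest is mechanical.
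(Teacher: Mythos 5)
Your proof is correct and follows essentially the same route as the source the paper cites for this statement (Pazy, Corollary 4.4), which the paper does not reprove: the dissipativity estimate $\|(\id_H - A)x\|_H \geq \|x\|_H$ together with closedness of $A$ gives closedness of the range of $\id_H - A$, any $y$ orthogonal to that range satisfies $A^*y = y$ and is killed by dissipativity of $A^*$, and the Lumer--Phillips range condition then yields the contraction semigroup. Nothing further is needed.
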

%
%
%
%
\subsection{Function Spaces}\label{ssec:function_spaces}
\revb{%
As previously noted, we consider an evolution equation in an infinite-dimensional setting.
Since the evolved KvN operator involves partial differential operators, we have to deal with function spaces that resolve different types of differentiability.%
}{}
We will frequently use several results regarding function spaces, which are introduced in this subsection.
For further details, we refer to~\cite{bib:AltNuernberg,bib:EvansGariepy,bib:AdamsFournier} as well as to~\cite[Chapter 1, §2]{bib:GiraultRaviart_FEMNavierStokes}.

Throughout this work, let $\Omega \subseteq \bR^d$, $d \in \bN$, be a bounded, open domain with \emph{Lipschitz boundary}.
The latter means that for every $x \in \partial\Omega$ there exists a neighborhood $Q$ such that after a change of coordinates the intersection $Q \cap \partial\Omega$ can be identified with the graph of a Lipschitz continuous function on a neighborhood of $\bR^{d - 1}$.
By $\lambda^d$ we denote the \emph{Lebesgue measure} and by $\cH^{d-1}$ the $(d - 1)$-dimensional Hausdorff measure.
Since $\Omega$ has a Lipschitz boundary, the \emph{outward unit normal} $\nu \colon \partial\Omega \to \bR^d$ is well-defined up to a Hausdorff null set.

%
%
\subsubsection{Continuous and Differentiable Functions}

The set of all \emph{continuous} functions on $\Omega$ is denoted by $C(\Omega)$ and the set of all such functions that are \emph{continuous to the boundary} is denoted by $C(\bar\Omega)$.
As $\Omega \subseteq \bR^d$ is bounded, its closure is compact.
Hence, all functions in $C(\bar\Omega)$ are bounded and uniformly continuous on $\bar\Omega$.
For $k \in \bN$, $k \geq 1$, let $C^k(\Omega)$ and $C^k(\bar\Omega)$ denote the space of all \emph{$k$-times differentiable} functions such that all partial derivatives of order up to $k$ are in $C(\Omega)$ or $C(\bar\Omega)$, respectively.
For $k = 0$, we define $C^0(\Omega) = C(\Omega)$ (and $C^0(\bar \Omega) = C(\bar \Omega)$).
The spaces
\begin{eq*}
	C^\infty(\Omega) := \bigcap_{k \in \bN} C^k(\Omega) \text{~and~} C^\infty(\bar\Omega) := \bigcap_{k \in \bN} C^k(\bar\Omega)
\end{eq*}
are the sets of \emph{smooth functions} on $\Omega$ or $\bar\Omega$, respectively.
The \emph{support} of a function $u \colon \Omega \to \bR$ is defined as
\begin{eq*}
	\supp u := \overline{\{ x \in \Omega : u(x) \neq 0\}},
\end{eq*}
where the closure is taken in $\bR^d$.
A function is said to be \emph{compactly supported} (in $\Omega$), if its support is contained in $\Omega$.
The set of all \emph{smooth compactly supported} functions is denoted by $C^\infty_0(\Omega)$.
For a Banach space $X$, we analogously define the spaces $C^k(\Omega, X)$ and $C^\infty_{(0)}(\Omega, X)$ as the set of all $k$-times differentiable and smooth (as well as compactly supported) functions, respectively, on $\Omega$ with values in $X$.

%
%
\subsubsection{Lebesgue Spaces}
In the remainder of the text, we always identify a function $u \colon \Omega \to \bR$ with its equivalence class $[u] = \left\{ v \colon \Omega \to \bR \,\colon\, \lambda^d(\{x \in \Omega : u(x) \neq v(x)\}) = 0 \right\}$ and write $\mathrm{d} x = \mathrm{d} \lambda^d (x)$ as a shorthand.
Given the measure space $(\Omega, \cL(\Omega), \lambda^d)$ with Lebesgue $\sigma$-algebra $\cL(\Omega)$, the $L^p$ space for $p \in \left[1, \infty \right)$ can be defined as the set of equivalence classes of Lebesgue measurable functions
\begin{eq*}
L^p(\Omega) = \left\{   u \colon \Omega \to \bR \,\colon\, \int_{\Omega} |u|^p \mathrm{d}x < \infty \right\}.
\end{eq*}
For $p = \infty$, we define the space
\begin{eq*}
	L^\infty(\Omega) := \{ u \colon \Omega \to \bR  \,\colon\, \exists\,C > 0 \text{~with~} |u| \leq C \text{~$\lambda^d$-a.e.~on~} \Omega\}.
\end{eq*}
The above spaces are Banach spaces when equipped with the norms
\begin{eq*}
	\|u\|_{L^p(\Omega)} := \left( \int_\Omega |u(x)|^p \mathrm{d}x \right)^{1/p} \text{for~}p \in [1, \infty) \text{~and~}
	\|u\|_{L^\infty(\Omega)} := \inf_{\substack{N \subseteq \Omega,\\ \lambda^d(N) = 0}} \sup_{x \in \Omega \backslash N} |u(x)|,
\end{eq*}
respectively.
For $p = 2$, the norm is induced by the inner product
\begin{eq*}
	(u,v)_{L^2(\Omega)} := \int_\Omega u  v \,\dx, \quad \text{for all~} u, v \in L^2(\Omega)
\end{eq*}
and therefore $L^2(\Omega)$ is a Hilbert space.
The space $L^p(\Omega, \bR^{m})$, $m \in \bN$, is the set of functions ${u \colon \Omega \to \bR^{m}}$ whose components are in $L^p(\Omega)$.
For $p,q \in (1, \infty)$ with $\frac{1}{p} + \frac{1}{q} = 1$, the dual of $L^p(\Omega)$ is isomorphic to $L^q(\Omega)$.
Moreover, we have $(L^1(\Omega))^* = L^\infty(\Omega)$, but in general $(L^\infty(\Omega))^* \neq L^1(\Omega)$, see \cite[Theorems 2.44--2.46, Remark 2.47]{bib:AdamsFournier}.

%
%
\subsubsection{\texorpdfstring{Sobolev Space and $H(\ddiv)$ Space}{Sobolev Space and H(div) Space}}

The \emph{distributional derivative} of $u \in L^p(\Omega)$ (with respect to the $j$-th coordinate) is the linear functional $\partial_j u \colon C_0^\infty(\Omega) \to \bR$ defined by
\begin{eq*}
	\langle \partial_j u, \varphi \rangle
	:= - \int_\Omega u \, \partial_j \varphi \,\dx
	~= - (u, \partial_j \varphi)_{L^2(\Omega)}
\end{eq*}
for $j = 1, \dots, d$ and for all $\varphi \in C_0^\infty(\Omega)$.
(Here, the use of the dual pairing is some slight abuse of notation.)
By definition, the distributional derivative \emph{always} exists.
%
%
The Sobolev space (for $p = 2$) is defined as
\begin{eq*}
	H^1(\Omega) := \left\{ u \in L^2(\Omega) : \partial_j u \in L^2(\Omega) \text{~for all~} j = 1, \dots, d\right\}
\end{eq*}
and is a Hilbert space when equipped with the inner product
\begin{eq*}
	(u,v)_{H^1(\Omega)} := (u,v)_{L^2(\Omega)} + (\nabla u, \nabla v)_{L^2(\Omega;\bR^d)} \text{~for all~} u, v \in H^1(\Omega).
\end{eq*}
In this case, the distributional derivative is called \emph{weak derivative}.
Sobolev functions exhibit a range of practical properties, one of which is a simple product rule highlighted in the following theorem.

\begin{thm}[Product rule, \mbox{cf.~\cite[Sec.~4.25]{bib:AltNuernberg}}]\label{thm:sobolev:product_rule}
	\reva{For}{} two functions $u\in H^1(\Omega)$ and $f \in C^1(\bar \Omega)$ \reva{we have $u f \in H^1(\Omega)$ with}{}
	\begin{eq*}
		\nabla (u f) = f \nabla u + u \nabla f \in L^2(\Omega;\bR^d).
	\end{eq*}
\end{thm}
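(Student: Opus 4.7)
The plan is to verify the identity $\partial_j(uf) = f\,\partial_j u + u\,\partial_j f$ as a weak derivative by reducing to the classical product rule via smooth approximation, and then to read off $L^2$-integrability of both sides.

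First, I would handle integrability. Since $\bar\Omega$ is compact and $f \in C^1(\bar\Omega)$, both $\|f\|_\infty$ and $\|\nabla f\|_\infty$ over $\bar\Omega$ are finite. Combined with $u \in L^2(\Omega)$ and $\nabla u \in L^2(\Omega;\bR^d)$, this immediately yields $uf \in L^2(\Omega)$ and $f\,\nabla u + u\,\nabla f \in L^2(\Omega;\bR^d)$, with norms controlled by $\|u\|_{H^1(\Omega)}$ and the $C^1$-norm of $f$. In particular $uf \in L^1_{\mathrm{loc}}(\Omega)$, so its distributional derivatives are well defined as linear functionals on $C_0^\infty(\Omega)$.

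The core step is then to show, for every $\varphi \in C_0^\infty(\Omega)$ and every $j \in \{1,\dots,d\}$,
\begin{eq*}
-\int_\Omega uf\,\partial_j \varphi\,\dx \;=\; \int_\Omega (f\,\partial_j u + u\,\partial_j f)\,\varphi\,\dx.
\end{eq*}
To achieve this, I would approximate $u$ locally by smooth functions. Because $\supp \varphi$ is compactly contained in $\Omega$, one can pick a relatively compact open neighborhood $U$ of $\supp \varphi$ with $\overline U \subset \Omega$ and form the mollifications $u_\varepsilon := u * \eta_\varepsilon \in C^\infty(U)$ for a standard mollifier $\eta_\varepsilon$ and $\varepsilon$ small enough, obtaining $u_\varepsilon \to u$ and $\nabla u_\varepsilon \to \nabla u$ in $L^2(U)$. (Alternatively, one could invoke Meyers--Serrin to get a global smooth approximant in $H^1(\Omega)$.) Each product $u_\varepsilon f$ is $C^1$ on $U$, so the classical product rule together with integration by parts — with no boundary term, since $\varphi$ vanishes near $\partial U$ — gives
\begin{eq*}
-\int_\Omega u_\varepsilon f\,\partial_j \varphi\,\dx \;=\; \int_\Omega (f\,\partial_j u_\varepsilon + u_\varepsilon\,\partial_j f)\,\varphi\,\dx.
\end{eq*}

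Finally, I would pass to the limit $\varepsilon \to 0$. The $L^\infty$-bounds on $f$, $\nabla f$, $\varphi$, and $\partial_j \varphi$, combined with the $L^2(U)$-convergence of $u_\varepsilon$ and $\nabla u_\varepsilon$, make each of the three integrands converge in $L^1(\Omega)$ by a one-line H\"older estimate. The resulting identity is precisely the statement $\partial_j(uf) = f\,\partial_j u + u\,\partial_j f$ in the distributional sense, and the $L^2$-integrability established in the first paragraph then places $uf$ in $H^1(\Omega)$. I do not foresee a serious obstacle here; the only mildly delicate point is selecting an approximation that is smooth on a neighborhood of $\supp \varphi$ without requiring $u$ to extend or vanish at $\partial\Omega$, which is exactly why local mollification (or Meyers--Serrin) is the right tool rather than any approximation by functions smooth up to $\bar\Omega$.
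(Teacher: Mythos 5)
Your argument is correct and is essentially the standard proof of this result; the paper itself does not prove the theorem but only cites it (Alt, Sec.~4.25), and the cited textbook argument proceeds in the same way, namely by local smooth approximation of $u$, the classical product rule for the approximants, and passage to the limit in the distributional identity. The integrability step and the limit passage via the $L^\infty$-bounds on $f$, $\nabla f$ and the test function are handled correctly, so there is no gap.
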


There exist other versions of the product rule for Sobolev functions, but within the scope of this work \cref{thm:sobolev:product_rule} suffices.
Another important feature of Sobolev functions pertaining to boundary conditions is the existence of \emph{traces}.
This is covered in the next theorem.
%
%
\begin{thm}[Traces of Sobolev functions, cf.~\mbox{\cite[Theorem 4.6]{bib:EvansGariepy}}]\label{thm:trace_sobolev}
	Let $\Omega \subseteq \bR^d$ be a bounded, open domain with Lipschitz boundary.
	Then, there exists a linear, continuous operator $\tr: H^1(\Omega) \to L^2(\partial\Omega)$ that satisfies $\tr(u) = u|_{\partial\Omega}$ for all $u \in C^1(\bar \Omega)$.
\end{thm}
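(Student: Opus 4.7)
The plan is to establish the trace inequality $\|u|_{\partial\Omega}\|_{L^2(\partial\Omega)} \leq C\,\|u\|_{H^1(\Omega)}$ first on the dense subspace $C^1(\bar\Omega)$, and then extend $\tr$ to all of $H^1(\Omega)$ by continuity. The density step relies on the classical fact that, since $\partial\Omega$ is Lipschitz, $C^\infty(\bar\Omega)$ (hence $C^1(\bar\Omega)$) is dense in $H^1(\Omega)$; this is proved by covering $\partial\Omega$ with Lipschitz charts, locally translating a function in the inward normal direction to push its support into $\Omega$, and then mollifying. Once that density is in hand, the uniquely determined bounded linear extension from $C^1(\bar\Omega)$ to $H^1(\Omega)$ provides the operator $\tr$, and it agrees with pointwise restriction by construction.

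To prove the trace inequality itself, I would localize using a finite open cover $\{Q_i\}_{i=1}^N$ of $\partial\Omega$ by Lipschitz coordinate patches together with a subordinate smooth partition of unity $\{\eta_i\}$. After applying the bi-Lipschitz flattening map associated with $Q_i$, it suffices to bound the trace on $\{x_d = 0\}$ for a function $v \in C^1(\bar Q^+)$ compactly supported in the upper half-cylinder $Q^+ = \{x \in \bR^d : |x'| < r,\, 0 < x_d < r\}$. For such $v$ and each $x' = (x_1,\dots,x_{d-1})$, the fundamental theorem of calculus gives
\begin{eq*}
    |v(x',0)|^2 = -\int_0^r \partial_d\bigl(\chi(x_d)\,|v(x',x_d)|^2\bigr)\,\mathrm{d}x_d
\end{eq*}
for a suitable cutoff $\chi \in C^1_0([0,r))$ with $\chi(0)=1$. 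Expanding the derivative and applying Cauchy--Schwarz together with Young's inequality yields the local estimate $\|v(\cdot,0)\|_{L^2(\bR^{d-1})}^2 \leq C\,\|v\|_{H^1(Q^+)}^2$. The Lipschitz change of variables introduces a bounded Jacobian factor on both sides (on the boundary side via the surface measure relation with $\cH^{d-1}$), hence preserves the inequality up to a constant depending only on the charts.

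Finally, summing the local estimates applied to the pieces $\eta_i u$ and using the product rule from Theorem~\ref{thm:sobolev:product_rule} (bounding $\|\nabla(\eta_i u)\|_{L^2}$ by $\|u\|_{H^1(\Omega)}$ with a constant depending on $\max_i\|\eta_i\|_{C^1}$) gives the global inequality on $C^1(\bar\Omega)$. The main technical obstacle is the interplay between the limited regularity of the boundary charts and the differentiation under the coordinate change; care is needed because only Lipschitz regularity of the charts is available, but this suffices because we never differentiate the chart more than once in $L^\infty$, and the almost-everywhere defined outward normal $\nu$ (mentioned above in the setup) ensures that the pullback surface measure on $\partial\Omega$ is well-defined and comparable to $\cH^{d-1}$. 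Once the inequality is established on $C^1(\bar\Omega)$, the density argument yields the desired continuous operator $\tr\colon H^1(\Omega) \to L^2(\partial\Omega)$, and its independence from the approximating sequence is automatic from the linear-extension construction.
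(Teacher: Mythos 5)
The paper does not prove this statement at all: it is quoted as a known result with a citation to Evans--Gariepy (Theorem 4.6), so there is no internal proof to compare against. Your argument is essentially the classical proof given in that reference --- localize with a partition of unity over Lipschitz charts, obtain the boundary estimate by integrating $\partial_d\bigl(\chi\,|v|^2\bigr)$ in the vertical direction and applying Cauchy--Schwarz, then extend from the dense subspace $C^1(\bar\Omega)$ --- and it is correct, including your (accurate) caveat that after the bi-Lipschitz flattening one only has Lipschitz rather than $C^1$ regularity, which suffices since the fundamental theorem of calculus applies along almost every line and the chart is differentiated only once, in $L^\infty$.
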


Of particular importance are functions that are zero on the boundary.
This space is defined by $H^1_0(\Omega) = \ker{\tr}$.
Under the conditions of \cref{thm:trace_sobolev}, one can show that this space is the completion of $C_0^\infty(\Omega)$ with respect to the $H^1$-norm, cf. \cite[Theorem A8.10 and 3.29]{bib:AltNuernberg}.
Its dual space is referred to as $H^{-1}(\Omega)$.
The trace operator is \emph{not} surjective and its range is the space $H^\frac{1}{2}(\partial \Omega)$ equipped with the quotient norm
\begin{eq*}
	\|z\|_{H^{1/2}(\partial \Omega)} = \inf_{\substack{u \in H^1(\Omega),\\ \tr(u) = z}} \|u\|_{H^1(\Omega)}.
\end{eq*}
The dual of this space is denoted by $H^{-1/2}(\partial \Omega)$.

%
%
Similar to the definition of the distributional derivative, we define the \emph{distributional divergence} of a vector field $u \in L^p(\Omega; \bR^d)$ by
\begin{eq}\label{eq:distributional_divergence}
	\langle \ddiv u, \varphi \rangle
	:= - \int_\Omega u \cdot \nabla \varphi \, \dx.
\end{eq}%
Analogously, we are then interested in the existence of \emph{weak divergences}.
This motivates the definition of the space
\begin{eq*}
	H(\ddiv,\Omega) := \left\{u \in L^2(\Omega;\bR^d) : \ddiv u \in L^2(\Omega) \right\}.
\end{eq*}
Hence, we have $u \in H(\ddiv, \Omega)$ if and only if $u \in L^2(\Omega;\bR^d)$ and there exists a $y \in L^2(\Omega)$ such that
\begin{eq*}
	(u, \nabla \varphi)_{L^2(\Omega;\bR^d)} = -(y, \varphi)_{L^2(\Omega)} \text{~for all~} \varphi \in C_0^\infty(\Omega).
\end{eq*}
This space is a Hilbert space when equipped with the inner product
\begin{eq*}
	(u,v)_{H(\ddiv,\Omega)} := (u,v)_{L^2(\Omega;\bR^d)} + (\ddiv(u), \ddiv(v))_{L^2(\Omega)}.
\end{eq*}
It can then be shown that a function whose components are in $H^1(\Omega)$ is indeed in $H(\ddiv,\Omega)$.
Returning to the product rule described in \cref{thm:sobolev:product_rule}, we would like to highlight the following frequently encountered case:
Take a function $F \in C^1(\bar\Omega;\bR^d)$ with entries $F_j \in C^1(\bar\Omega)$.
By \cref{thm:sobolev:product_rule}, all components of $u F$ are in $H^1(\Omega)$ and its weak divergence reads
\begin{eq}\label{eq:product_rule}
	\ddiv(u F) = \sum_{j = 1}^d \partial_j(u F_j) = F \cdot \nabla u + (\ddiv F) u.
\end{eq}
We will also refer to \cref{eq:product_rule} as a \emph{product rule}.

Analogously to the Sobolev space case, one can ask for an extension of the trace.
It turns out that it only makes sense to ask for an extension of the \emph{trace in normal direction}.
%
%
\begin{thm}[Traces of $H(\ddiv)$ functions, see \reva{\mbox{\cite[Theorem 2.5]{bib:GiraultRaviart_FEMNavierStokes}}}]\label{thm:trace_H_div}
	Let $\Omega \subseteq \bR^d$ be a bounded, open domain with Lipschitz boundary.
	\reva{Then,}{} there exists a bounded linear operator $\tr_\nu \colon H(\ddiv,\Omega) \to H^{-1/2}(\partial \Omega)$ such that for all $p \in C^1(\bar\Omega; \bR^d)$ \reva{holds}{} $\tr_\nu(p) = p|_{\partial\Omega} \cdot \nu$, where $\nu \colon \partial \Omega \to \bR^d$ denotes the outward unit normal of $\Omega$.
\end{thm}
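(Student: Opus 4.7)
The plan is to construct $\tr_\nu$ by duality via a generalized integration-by-parts formula, exploiting the existing trace theorem for $H^1(\Omega)$ recalled in \cref{thm:trace_sobolev}. The starting point is the classical divergence theorem: for $p \in C^1(\bar\Omega;\bR^d)$ and any $\varphi \in C^1(\bar\Omega)$ one has
\begin{eq*}
	\int_{\partial\Omega} (p \cdot \nu) \, \varphi \, \dH^{d-1}
	= \int_\Omega (\ddiv p)\, \varphi \,\dx + \int_\Omega p \cdot \nabla \varphi \,\dx.
\end{eq*}
This identity makes sense for any $p \in H(\ddiv,\Omega)$ and $\varphi \in H^1(\Omega)$, so it is natural to use the right-hand side as the \emph{definition} of the normal trace.

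Concretely, for $z \in H^{1/2}(\partial\Omega)$ I would pick any lift $\varphi \in H^1(\Omega)$ with $\tr(\varphi) = z$ and set
\begin{eq*}
	\langle \tr_\nu(p), z \rangle
	:= \int_\Omega (\ddiv p)\, \varphi \,\dx + \int_\Omega p \cdot \nabla \varphi \,\dx.
\end{eq*}
The first step is to verify that this is independent of the chosen lift. If $\varphi_1, \varphi_2$ are two lifts, then $\varphi_1 - \varphi_2 \in H^1_0(\Omega)$, which by the characterization recalled after \cref{thm:trace_sobolev} is the $H^1$-closure of $C_0^\infty(\Omega)$. For any $\psi \in C_0^\infty(\Omega)$, the definition of the distributional divergence in \cref{eq:distributional_divergence}, combined with $\ddiv p \in L^2(\Omega)$, gives $\int_\Omega (\ddiv p)\psi \,\dx + \int_\Omega p \cdot \nabla \psi \,\dx = 0$. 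Passing to the limit via continuity of the bilinear form in $\varphi$ with respect to the $H^1$-norm (using Cauchy--Schwarz and $p, \ddiv p \in L^2$) yields independence of the lift.

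The second step is boundedness. Cauchy--Schwarz gives
\begin{eq*}
	|\langle \tr_\nu(p), z \rangle|
	\leq \|\ddiv p\|_{L^2(\Omega)} \|\varphi\|_{L^2(\Omega)} + \|p\|_{L^2(\Omega;\bR^d)} \|\nabla \varphi\|_{L^2(\Omega;\bR^d)}
	\leq \|p\|_{H(\ddiv,\Omega)} \|\varphi\|_{H^1(\Omega)}
\end{eq*}
for every lift $\varphi$; taking the infimum over all lifts and using the definition of the quotient norm on $H^{1/2}(\partial\Omega)$ gives $|\langle \tr_\nu(p), z \rangle| \leq \|p\|_{H(\ddiv,\Omega)} \|z\|_{H^{1/2}(\partial\Omega)}$. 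Hence $\tr_\nu(p) \in H^{-1/2}(\partial\Omega)$ with $\|\tr_\nu(p)\|_{H^{-1/2}(\partial\Omega)} \leq \|p\|_{H(\ddiv,\Omega)}$, and linearity in $p$ is immediate. Finally, for $p \in C^1(\bar\Omega;\bR^d)$ the classical divergence theorem identifies $\langle \tr_\nu(p), \tr(\varphi)\rangle$ with $\int_{\partial\Omega} (p\cdot\nu) \tr(\varphi) \,\dH^{d-1}$ for every $\varphi \in C^1(\bar\Omega)$, which by density of traces of $C^1(\bar\Omega)$-functions in $H^{1/2}(\partial\Omega)$ (and the $L^2$-boundedness of $p\cdot\nu$) shows $\tr_\nu(p) = p|_{\partial\Omega}\cdot\nu$ as elements of $H^{-1/2}(\partial\Omega)$.

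The main obstacle is the well-definedness in the first step: it is essential that $H^1_0(\Omega)$ is the $H^1$-closure of $C_0^\infty(\Omega)$, which in turn relies on the Lipschitz-boundary assumption on $\Omega$. Everything else is a direct application of Cauchy--Schwarz and the classical divergence theorem.
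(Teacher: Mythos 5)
Your argument is correct: the duality construction (define $\langle \tr_\nu(p),z\rangle$ through an $H^1$-lift of $z$, check lift-independence via the density of $C_0^\infty(\Omega)$ in $H^1_0(\Omega)$, bound by the quotient norm of $H^{1/2}(\partial\Omega)$, and identify the result with $p|_{\partial\Omega}\cdot\nu$ for $p\in C^1(\bar\Omega;\bR^d)$ using the classical divergence theorem and density of smooth traces) is exactly the standard proof. The paper does not prove this theorem itself but cites it from the reference on finite element methods for Navier--Stokes equations, where essentially this same argument is given, so your proposal matches the intended proof.
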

One should note that the values of this trace are of fairly low regularity as they are only distributions in general.
An interesting feature is the extension of Green's formula.
%
%
\begin{thm}[Green's formula, see \reva{\mbox{\cite[eq. (2.17)]{bib:GiraultRaviart_FEMNavierStokes}}}]\label{thm:green_Sobolev_H_div}
	Let $\Omega \subseteq \bR^d$ be a bounded, open domain with Lipschitz boundary.
	Then, \reva{for all $u \in H^1(\Omega)$ and $p \in H(\ddiv,\Omega)$ we have}{}
	\begin{eq*}
		\langle \tr_\nu p, \tr u \rangle_{H^{-1/2}(\partial\Omega), H^{1/2}(\partial\Omega)} = (\ddiv p, u)_{L^2(\Omega)} + (p, \nabla u)_{L^2(\Omega;\bR^d)}.
	\end{eq*}
\end{thm}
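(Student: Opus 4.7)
The plan is to establish the identity first on smooth data via the classical divergence theorem, and then extend it by a twofold density argument together with the continuity properties of the trace operators. For smooth $u \in C^1(\bar\Omega)$ and $p \in C^1(\bar\Omega;\bR^d)$, the classical product rule (\cref{thm:sobolev:product_rule}) combined with the divergence theorem for Lipschitz domains yields
\[
\int_{\partial\Omega} (p\cdot\nu)\, u \,\mathrm{d}\cH^{d-1} = \int_\Omega \ddiv(up)\, \mathrm{d}x = (\ddiv p, u)_{L^2(\Omega)} + (p,\nabla u)_{L^2(\Omega;\bR^d)}.
\]
Since, in this smooth case, $\tr_\nu p = p|_{\partial\Omega}\cdot\nu$ and $\tr u = u|_{\partial\Omega}$ are both continuous functions on $\partial\Omega$, the left-hand side coincides with the dual pairing $\langle \tr_\nu p, \tr u\rangle_{H^{-1/2}(\partial\Omega),\, H^{1/2}(\partial\Omega)}$, and the claim holds on this smooth dense subset.

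Next I would fix $p \in C^1(\bar\Omega;\bR^d)$ and extend in the first argument. For $u \in H^1(\Omega)$ I choose a sequence $u_n \in C^\infty(\bar\Omega)$ with $u_n \to u$ in $H^1(\Omega)$, which exists by the density of smooth functions in $H^1$ on Lipschitz domains. The convergences $u_n \to u$ in $L^2(\Omega)$ and $\nabla u_n \to \nabla u$ in $L^2(\Omega;\bR^d)$ pass the right-hand side to the limit, while the continuity of $\tr \colon H^1(\Omega) \to H^{1/2}(\partial\Omega)$ from \cref{thm:trace_sobolev}, together with the fact that $\tr_\nu p$ is a fixed element of $H^{-1/2}(\partial\Omega)$, handles the left-hand side.

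In the final step I fix $u \in H^1(\Omega)$ and extend in the second argument. Using the density of $C^\infty(\bar\Omega;\bR^d)$ in $H(\ddiv,\Omega)$ for Lipschitz domains (cf.~\cite{bib:GiraultRaviart_FEMNavierStokes}), I pick $p_n \in C^\infty(\bar\Omega;\bR^d)$ with $p_n \to p$ in $H(\ddiv,\Omega)$. The right-hand side converges because $p_n \to p$ and $\ddiv p_n \to \ddiv p$ both hold in $L^2$. The continuity of the normal trace $\tr_\nu \colon H(\ddiv,\Omega) \to H^{-1/2}(\partial\Omega)$ from \cref{thm:trace_H_div} ensures $\tr_\nu p_n \to \tr_\nu p$ in $H^{-1/2}(\partial\Omega)$, and the duality pairing against the fixed element $\tr u \in H^{1/2}(\partial\Omega)$ passes accordingly to the limit, yielding the formula in full generality.

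The main obstacle I anticipate is the density of $C^\infty(\bar\Omega;\bR^d)$ in $H(\ddiv,\Omega)$ on a general Lipschitz domain: although classical, it is not automatic and requires a local extension-and-mollification argument near $\partial\Omega$ that genuinely uses the Lipschitz regularity assumption. Once both density results are granted and the compatibility of $\tr_\nu$ on smooth fields with the classical normal trace is noted, the remainder of the argument is a routine two-step continuity passage.
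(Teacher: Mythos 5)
Your argument is correct. Note that the paper does not prove this statement at all---it is quoted as a known result from Girault--Raviart---and your two-step scheme (classical divergence theorem for $C^1$ data, then density of $C^\infty(\bar\Omega)$ in $H^1(\Omega)$ and of $C^\infty(\bar\Omega;\bR^d)$ in $H(\ddiv,\Omega)$ combined with the continuity of $\tr$ and $\tr_\nu$) is essentially the standard proof given in that reference, so there is nothing to compare beyond that. The only point worth flagging is that in the literature the boundedness of $\tr_\nu$ is usually \emph{established} via this very formula, so your argument is non-circular only because you, like the paper, take \cref{thm:trace_H_div} (and the $H(\ddiv)$ density theorem you correctly identify as the main external ingredient) as given.
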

\cref{thm:green_Sobolev_H_div} facilitates an extension of Green's formula from $C^1$ functions to Sobolev and $H(\ddiv)$ functions.
Moreover, it involves the normal trace, which is in general only a distribution.
%
%
\subsection{Derivation of the Koopman--von Neumann Generator}
\label{ssec:derivation_kvn}
In this subsection, we return to the ODE setting from the introduction.
Consider the following autonomous initial value problem (IVP)
\begin{eq}\label{eq:ode}
	\dot x &= F(x), \\
	x(0) &= x_0
\end{eq}
for a vector field $F \colon \bar\Omega \to \bR^d$.
Within the scope of this work, we do not need the vector field to be defined on the whole Euclidean space as we are only interested in solutions with their trajectories contained in $\bar\Omega$.
In what follows, we make the standing smoothness assumption that
\begin{eq}\label{eq:F_regularity}
	F \in C^1(\bar \Omega)
\end{eq}
throughout. It then holds that $\ddiv F \in L^\infty(\Omega)$ and $\ddiv F \in C(\bar \Omega)$.
The latter implies that $\ddiv F$ and $F$ themselves are uniformly continuous on $\bar \Omega$.

%
%
Let us briefly discuss the existence and uniqueness of solutions of \cref{eq:ode}.
Assumption~\cref{eq:F_regularity} guarantees that $F$ is globally Lipschitz continuous (on $\bar \Omega$).
Hence, there exists a Lipschitz continuous extension to $\bR^d$ according to Kirszbraun's theorem with the same Lipschitz constant, see \cite{bib:Kirszbraun}.
However, this extension does not need to be unique, but it suffices to apply the standard existence and uniqueness theory for dynamical systems.
This guarantees for every such extension the existence of a unique solution $x \colon [0,\infty) \to \bR^d$.

These solutions may, however, depend on the chosen extension, if they ever leave $\bar\Omega$.
To prevent this, we need to ensure trajectories starting in $\bar \Omega$ to remain in $\bar \Omega$ for all $t > 0$.
In other words, we require the set $\bar\Omega$ to be \emph{(positively) invariant} with respect to \cref{eq:ode}.
Solutions with this property are also called \emph{viable}.
There are necessary and sufficient criteria to guarantee the viability of solutions, which can be found in the theorems by Nagumo and Bony--Brezis, see \cite[Theorem 16.5]{bib:AmannMetzen}, \cite[Chapter 4, Section 2, Theorem 2]{bib:AubinCellina_DifferentiaInclusions}, and \cite[Chapter III, §10, XV]{bib:WalterThompson_ODEs}.
Here, we propose the following \emph{no-outflow condition}: For all $x \in \partial\Omega$ we require
%
%
\begin{eq}\label{eq:F_no_outflow}
	F(x) \cdot \nu(x) \leq 0.
\end{eq}

\revb{The reader is reminded that $\nu(x)$ denotes the outward unit normal of the domain's boundary in the point $x \in \partial \Omega$.}{}
\revb{The no-outflow condition \cref{eq:F_no_outflow}}{} resembles the results by Bony, see the aforementioned references as well as \cite{bib:Bony, bib:Redheffer}.
However, the outward unit normal therein has a different definition compared to the one for Lipschitz boundaries, see \cite[A8.5(3)]{bib:AltNuernberg}.

There is another interpretation of \cref{eq:F_no_outflow}.
As it turns out, the KvN equation resembles a transport equation.
For the latter, see for instance \cite{bib:Bardos_Transport}.
Therefore, we need to impose a boundary condition on the so-called \emph{inflow boundary}.
This will be explained in more detail in Section~\ref{sec:existence_kvn}.
Condition \cref{eq:F_no_outflow} will only be occasionally used in the article.
Therefore, it is \emph{not} a standing assumption and will be explicitly stated, where needed.

%
%
Assuming \cref{eq:F_regularity} and \cref{eq:F_no_outflow} hold, the ODE in \cref{eq:ode} has a unique solution on $[0,\infty)$ that takes only values in $\bar\Omega$ for each initial value $x_0 \in \bar\Omega$.
Consider the associated \emph{semiflow}, cf. \cite[Section 10 and Remark (10.2)(c)]{bib:AmannMetzen} and \cite[Theorem 6.1]{bib:Teschl_ODE}, $\Phi \colon [0,\infty) \times \bar\Omega \to \bar\Omega$ with $\Phi_t(x_0) = x(t)$, where $x$ is the solution of \cref{eq:ode}.
Clearly, $\Phi$ has the semigroup properties $\Phi_0 = \id_{\bar\Omega}$ and $\Phi_{t + s} = \Phi_t \circ \Phi_s$.

%
%
\subsubsection{Perron--Frobenius and Koopman Operators} 

Next, we introduce transfer operators.
Our exposition is predominantly based on the corresponding sections in \cite{LaMa94}.
The Perron--Frobenius operator for a given time $t \geq 0$ is the bounded linear operator $\cP(t) \colon L^1(\Omega) \to L^1(\Omega)$ defined by
\begin{eq*}
	\int_A (\cP(t) \rho)(x) \, \dx = \int_{\Phi_t^{-1}(A)} \rho(x) \, \dx \text{~for all~}  A \in \cB(\Omega),
\end{eq*}
where $\cB(\Omega)$ denotes the Borel $\sigma$-algebra on $\Omega$.
Here, the map $\Phi_t$ is nonsingular as it is \reva{a $C^1$-diffeomorphism}{}.
Using the properties of the semiflow $\Phi$, it can be shown that the family $(\cP(t))_{t \geq 0}$ is a semigroup of bounded linear operators on $L^1(\Omega)$, \reva{cf. \cite[p. 42]{LaMa94}}.
This operator is in fact connected to the propagation of probability densities in the Liouville equation.

%
%
In addition, we consider the \emph{Koopman operator} $\cK(t) \colon L^\infty(\Omega) \to L^\infty(\Omega)$ defined by
\begin{eq*}
	\cK(t) f := f \circ \Phi_t.
\end{eq*}
Again, the family $(\cK(t))_{t \geq 0}$ is a semigroup of bounded linear operators on $L^\infty(\Omega)$.
The application of the Koopman operator is connected to the time evolution of an observable along a trajectory.
%
%
It can be shown that the Koopman operator is in fact the dual of the Perron--Frobenius operator with $(L^1(\Omega))^* = L^\infty(\Omega)$, see \reva{\cite[eq. (3.3.4)]{LaMa94}}. Hence, we have
\begin{eq*}
	\langle \cK(t) f, \rho\rangle_{L^\infty(\Omega), L^1(\Omega)}
	= \langle f, \cP(t)\rho\rangle_{L^\infty(\Omega), L^1(\Omega)}
\end{eq*}
for all $f \in L^\infty(\Omega), \rho \in L^1(\Omega)$ and $t \geq 0$.

%
%
The generators of the Perron--Frobenius and Koopman operators are given by
\begin{eq*}
	\cL^* \rho &:= \lim_{t \searrow 0} \frac{1}{t} (\cP(t)\rho - \rho) = -\ddiv(\rho F)&& \text{and}\\
	\cL f &:= \lim_{t \searrow 0} \frac{1}{t} (\cK(t)f - f) = F \cdot \nabla f,&&
\end{eq*}
respectively, for sufficiently smooth functions $f$ and $\rho$.
For now, we do not specify the function spaces and keep it at the formal level.
These operators will be called \emph{Perron--Frobenius generator} and \emph{Koopman generator} to distinguish them from the operators in the associated semigroup.

%
%
\subsubsection{Koopman--von Neumann Operator}

The aim of KvN mechanics is the formulation of a quantum system whose solution $\psi$ satisfies $\rho = |\psi|^2$ according to Born's rule.
In other words, a quantum mechanical system is formulated, whose probability distribution is associated with the solution of the dynamical system in \cref{eq:ode}.
Originally, this framework was proposed as a link between classical and quantum mechanics.
Hence, the infinitesimal generator was first proposed for Hamiltonian systems, see \cite{bib:Koopman, bib:vonNeumann_Koopman, bib:vonNeumann_KoopmanErrata, bib:KoopmanvonNeumann_KvN}.
In recent years, this framework was extended to general ODEs, see \cite{bib:Joseph, bib:BogdanovBognanova, bib:BoganovBogdanovaFastovetsLukichev}.

In this paper, we introduce the KvN generator as
\begin{eq}\label{eq:kvn}
	\kvnop := \frac{1}{2} (\cL^* - \cL).
\end{eq}
This notion diverges slightly from the literature by representing the KvN generator as the skew-symmetric component of the Koopman or Perron-Frobenius generator, respectively.
However, our approach makes the skew-symmetry of the generator immediately evident.
By using the product rule, we formally derive
\begin{eq*}
	\kvnop \psi = -F \cdot \nabla \psi - \frac{1}{2} (\ddiv F) \psi
\end{eq*}
for a sufficiently smooth function $\psi$.
From this we can see that the operator resembles a special case of a \emph{transport equation}.
To the best of the authors' knowledge, this connection has not been highlighted yet.
Throughout this article, we will hence occasionally point out similarities between our approach and the existing literature on transport equations.

\begin{remark}
	In the case of Hamiltonian systems, it is worth mentioning that the Perron--Frobenius and Koopman generators coincide.
	Given a classical Hamiltonian $ H \colon \bR^d \times \bR^d \to \bR $, the equations of motion can be written as
	\begin{equation*}
		\dot{q}_j = \pd{H}{p_j}, \quad \dot{p}_j = -\pd{H}{q_j},
	\end{equation*}
	with $ j = 1, \dots, d $, where the vector $ q $ contains the generalized coordinates and $ p $ the momenta.
	Defining $ x = \begin{bmatrix} q \\[0.2em] p \end{bmatrix} \in \bR^{2 d} $, we obtain $ F = \begin{bmatrix} \phantom{-} \nabla_p H \\ -\nabla_q H \end{bmatrix} $. The Koopman generator can hence be written as
	\begin{equation*}
		\cL f = \sum_{j=1}^d \left( \pd{H}{p_j} \pd{f}{q_j} - \pd{H}{q_j} \pd{f}{p_j} \right)
	\end{equation*}
	and the Perron--Frobenius generator as
	\begin{equation*}
		\cL^* \rho = - \sum_{j=1}^d \left( \pd{H}{p_j} \pd{\rho}{q_j} - \pd{H}{q_j} \pd{\rho}{p_j} \right).
	\end{equation*}
	Compare to \cite[Ex. 7.6.1 and 7.8.1]{LaMa94} for more details.
	That is, $ \cL $ is a skew-adjoint operator with $ \cL = -\cL^* $ and we have $\kvnop = \cL^*$.
\end{remark}

The main task of this paper is to demonstrate the existence of a strongly continuous semigroup of unitary operators on the set of complex $L^2$ functions.
To this end, we wish to apply the results from \cref{ssec:semigroup}, which requires a suitable framework.
This is discussed in the subsequent sections.
In the remainder of this subsection, however, we give an example to demonstrate why this is in fact a nontrivial task.
%
%

One of the necessary conditions for the existence of a strongly continuous semigroup is for the generator to be a densely defined generator with closed graph.
Hence, we need to select an appropriate function space setting.

As we aim at a quantum-physical interpretation of the system, we choose $X = L^2(\Omega)$.
For now, we stick to real-valued functions.
Since the Koopman generator involves a gradient, we might hence try to use the Sobolev space $H_0^1(\Omega)$ as the domain of the KvN generator.
This motivates the following framework.
%
%
Consider $\cL \colon H_0^1(\Omega) \to L^2(\Omega)$ defined by
\begin{eq*}
	\cL\psi := F \cdot \nabla \psi.
\end{eq*}
%
%
In this setting, the Perron--Frobenius generator $\cL^* \colon L^2(\Omega) \to H^{-1}(\Omega)$ as the dual for arbitrary $\varphi \in L^2(\Omega)$ and $\psi \in H_0^1(\Omega)$ reads
\begin{eq*}
	\langle \cL^* \varphi , \psi \rangle_{H^{-1}(\Omega),H_0^1(\Omega)}
	= (\varphi, \cL \psi)_{L^2(\Omega)}
	= (\varphi, F \cdot \nabla \psi)_{L^2(\Omega)}
	= - \langle \ddiv(\varphi F), \psi \rangle_{H^{-1}(\Omega), H_0^1(\Omega)}.
\end{eq*}
Here, $\ddiv$ is the distributional divergence, see~\eqref{eq:distributional_divergence}.
Clearly, if $\psi, \varphi \in H_0^1(\Omega)$, then, using the product rule \cref{eq:product_rule} and the regularity assumption \cref{eq:F_regularity} on $F$, we have
\begin{eq*}
	\ddiv(\varphi F) = F \cdot \nabla \varphi + (\ddiv F) \varphi \in L^2(\Omega).
\end{eq*}%
This means that
\begin{eq}\label{eq:pf_generator_restriction_to_H_0_1}
	\cL^*\varphi = -F \cdot \nabla \varphi - (\ddiv F) \varphi \text{~for~} \varphi \in H_0^1(\Omega)
\end{eq}
and the restriction $\cL^* \colon H_0^1(\Omega) \to L^2(\Omega)$ is a bounded linear operator.
%
%
As we have the continuous embeddings $H_0^1(\Omega) \embeds L^2(\Omega) \embeds H^{-1}(\Omega)$, we can define the KvN generator as $\kvnop \colon H_0^1(\Omega) \to H^{-1}(\Omega)$ by $\kvnop = \frac{1}{2}(\cL^* - \cL)$.
This yields
\begin{eq*}
	\langle \kvnop \varphi, \psi \rangle_{H^{-1}(\Omega),H_0^1(\Omega)}
	&= \frac{1}{2} \left( (\varphi, \cL \psi)_{L^2(\Omega)} - (\cL \varphi, \psi)_{L^2(\Omega)} \right)\\
	&= \frac{1}{2} \left( (F \cdot \nabla \varphi, \psi)_{L^2(\Omega)} - (F \cdot \nabla \psi, \varphi)_{L^2(\Omega)} \right).
\end{eq*}
By construction, this formulation of the KvN generator provides a bounded skew-symmetric operator.

%
%
To apply semigroup theory, however, we need to deal with unbounded operators on $L^2(\Omega)$ taking values in $L^2(\Omega)$ that are defined on a dense subspace.
This motivates in combination with \cref{eq:pf_generator_restriction_to_H_0_1} the definition of the KvN generator as the unbounded operator $\kvnop \colon H_0^1(\Omega) \subseteq L^2(\Omega) \to L^2(\Omega)$ defined by
\begin{eq}\label{eq:kvn_textbook}
	\kvnop \psi = -F \cdot \nabla \psi - \frac{1}{2} (\ddiv F) \psi,
\end{eq}
which is the formula that can be found in the existing literature on KvN mechanics.
Unfortunately, this approach is incompatible to the semigroup framework.
The reason lies in the lack of a closed graph of the KvN generator, even in seemingly trivial cases.

To see this, take the vanishing vector field $F = 0$.
Then, the KvN generator reads $\kvnop = 0$ and its graph is $\gph \kvnop = H_0^1(\Omega) \times \{0\}$.
But as $H_0^1(\Omega) \subseteq L^2(\Omega)$ is dense, the completion of this set is $L^2(\Omega) \times \{0\}$ and therefore the graph cannot be closed.
Thus, a different domain space is required.

%
%
\section{Perron--Frobenius--Sobolev Space}\label{sec:pfs}
As demonstrated in \cref{ssec:derivation_kvn}, the closedness of the KvN generator may suffer from the behavior of the vector field.
In fact, if one would use anything else than a closed subspace of $L^2(\Omega)$, then the graph would not be closed.
Therefore, we want to incorporate the vector field into the definition of the domain.
This motivates the following modification of the Sobolev space.
%
%
\begin{defn}[Perron--Frobenius--Sobolev Spaces]\label{defn:pfs}
	Let $\Omega \subseteq \bR^d$ be a bounded, open domain with Lipschitz boundary.
	The \emph{Perron--Frobenius--Sobolev space} is defined as
	\begin{eq*}
		H(\cL^*, \Omega) := \left\{ \psi \in L^2(\Omega) : \psi F \in H(\ddiv, \Omega)\right\}.
	\end{eq*}
	It is a Hilbert space when equipped with the inner product
	\begin{eq*}
		(\psi , \varphi)_{\pfs} := (\psi , \varphi)_{L^2(\Omega)} + (\ddiv(\psi F) , \ddiv(\varphi F) )_{L^2(\Omega)} \text{~for all~} \psi, \varphi \in \pfs.
	\end{eq*}
\end{defn}
Due to \cref{eq:F_regularity}, the condition in \cref{defn:pfs} is equivalent to $\psi F$ having a weak divergence, i.e., there exists a $y \in L^2(\Omega)$ such that
\begin{eq*}
	(\psi F, \nabla \varphi)_{L^2(\Omega;\bR^d)} = - (y, \varphi)_{L^2(\Omega)} \text{~for all~} \varphi \in C_0^\infty(\Omega).
\end{eq*}
The name of this space is derived from the requirement that the Perron--Frobenius generator is well-defined with values in $L^2(\Omega)$.
In fact, the norm is equivalent to the graph norm of the Perron--Frobenius generator (with constants independent of the space dimensions).
This guarantees the completeness of the space.
We refer to this space as \emph{PFS space} and to its elements as \emph{PFS functions}.

%
%
Transport equations have previously been investigated using semigroup theory, see, e.g., \cite{bib:Bardos_Transport}, where the domain $\{\psi \in L^2(\Omega) \colon F \cdot \nabla \psi \in L^2(\Omega)\}$ was considered.
This is essentially equivalent to $\pfs$ under assumption~\cref{eq:F_regularity}, as the product rule \cref{eq:product_rule} explains.
In this work, we will stick to \cref{defn:pfs} as we inherit more modern tools known from the analysis of $H(\ddiv)$ spaces.

%
%
Next, we discuss a few properties of the PFS space.
By assuming~\cref{eq:F_regularity} and using the product rule, we obtain the inclusions $H^1(\Omega) \subseteq \pfs \subseteq L^2(\Omega)$.
In fact, depending on $F$ and the space dimension, either one of these inclusions can be an equality.
To see this, consider the following two cases:
Firstly, for $d = 1$, let $\Omega \subseteq \bR$ be a finite, open interval and set $F(x) = 1$ for all $x \in \bar\Omega$.
Then, for a function $\psi \in H(\cL^*,\Omega)$, we have $\ddiv(\psi F) = \psi' \in L^2(\Omega)$.
This proves that $H(\cL^*,\Omega) \subseteq H^1(\Omega)$ in this case.
Secondly, for arbitrary dimensions $d \in \bN$, $d \geq 1$, consider the function $F$ with $F(x) = 0$ for all $x \in \Omega$.
Then, we have $\ddiv(\psi F) = 0 \in L^2(\Omega)$ for all $\psi \in L^2(\Omega)$ and hence $L^2(\Omega) = H(\cL^*, \Omega)$.
In conclusion, the Perron--Frobenius--Sobolev space can be interpreted as an intermediate concept between Sobolev and Lebesgue spaces.
%
%
\subsection{Trace Operator and Green's Formula for the Perron--Fro\-be\-nius--Sobolev Functions}\label{ssec:trace_green_pfs}
Trace operators are a vital tool to treat boundary conditions for Sobolev functions.
To declare them on the PFS space, we use the normal trace known for $H(\ddiv)$ functions, see \cref{thm:trace_H_div}.
By definition \reva{we have}{} $\psi F \in H(\ddiv,\Omega)$ for every $\psi \in \pfs$.
Hence, the normal trace of the latter is well-defined with $\tr_\nu(\psi F) \in H^{-1/2}(\partial \Omega)$.
%
%
\begin{thm}[Trace for Perron--Frobenius--Sobolev Functions]\label{thm:trace_pfs}
	Let $\Omega \subseteq \bR^d$ be a bounded, open domain with Lipschitz boundary.
	The map $\tr_{F \nu} \colon \pfs \to H^{-1/2}(\partial \Omega)$ defined by $\psi \mapsto \tr_\nu(\psi F)$ is a bounded linear operator with
	\begin{eq*}
		\|\tr_{F\nu}\|_{\cL\left(\pfs ,H^{-1/2}(\partial \Omega)\right)}
		\leq \max(1, \|F\|_{L^\infty(\Omega;\bR^d)}) \|\tr_\nu\|_{\cL\left(H(\ddiv,\Omega),H^{-1/2}(\partial \Omega)\right)}.
	\end{eq*}
\end{thm}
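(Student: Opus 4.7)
The plan is to realize $\tr_{F\nu}$ as a composition $\tr_\nu \circ M_F$, where $M_F \colon \pfs \to H(\ddiv, \Omega)$ is the multiplication map $\psi \mapsto \psi F$. Once both factors are shown to be bounded linear operators, the composition inherits linearity and boundedness with a norm estimate that is the product of the two.

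First, I would verify that $M_F$ is well-defined and linear: well-definedness is built into the definition of $\pfs$ (which demands precisely $\psi F \in H(\ddiv, \Omega)$), and linearity in $\psi$ is immediate from the pointwise product and from the linearity of the distributional divergence. Linearity of $\tr_{F\nu}$ then follows because $\tr_\nu$ from \cref{thm:trace_H_div} is linear.

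Next, I would estimate $\|M_F \psi\|_{H(\ddiv,\Omega)}$ in terms of $\|\psi\|_{\pfs}$. Using the pointwise bound $|\psi F| \leq \|F\|_{L^\infty(\Omega;\bR^d)} |\psi|$ I obtain $\|\psi F\|_{L^2(\Omega;\bR^d)} \leq \|F\|_{L^\infty(\Omega;\bR^d)} \|\psi\|_{L^2(\Omega)}$, while the divergence contribution $\|\ddiv(\psi F)\|_{L^2(\Omega)}$ is already controlled by $\|\psi\|_{\pfs}$ by definition. Combining these using the elementary inequality $a^2 + b^2 \leq C^2(a^2 + b^2)$ with the single constant $C = \max(1, \|F\|_{L^\infty(\Omega;\bR^d)})$ applied separately to the two summands of $\|M_F\psi\|_{H(\ddiv,\Omega)}^2$ and $\|\psi\|_{\pfs}^2$, I expect to arrive at
\begin{eq*}
    \|M_F \psi\|_{H(\ddiv,\Omega)} \leq \max(1, \|F\|_{L^\infty(\Omega;\bR^d)}) \|\psi\|_{\pfs}.
\end{eq*}

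Finally, I would invoke \cref{thm:trace_H_div} to bound $\|\tr_\nu(\psi F)\|_{H^{-1/2}(\partial\Omega)}$ by $\|\tr_\nu\|_{\cL(H(\ddiv,\Omega), H^{-1/2}(\partial\Omega))} \|\psi F\|_{H(\ddiv,\Omega)}$ and chain the two estimates to read off the claimed operator norm bound. There is no real obstacle here; the only subtle points are making sure that the $\max(1, \|F\|_{L^\infty})$ (and not a squared version) comes out cleanly, which is just a matter of carrying the constant through the two summands symmetrically before taking the square root, and noting that $\|F\|_{L^\infty(\Omega;\bR^d)}$ is finite thanks to the standing regularity assumption \cref{eq:F_regularity}.
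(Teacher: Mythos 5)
Your proposal is correct and follows essentially the same route as the paper's proof: factor $\tr_{F\nu}$ through $\psi \mapsto \psi F \in H(\ddiv,\Omega)$, bound $\|\psi F\|_{H(\ddiv,\Omega)} \leq \max(1,\|F\|_{L^\infty(\Omega;\bR^d)})\|\psi\|_{\pfs}$ termwise, and apply the boundedness of $\tr_\nu$ from \cref{thm:trace_H_div}. The constant comes out exactly as claimed, so there is nothing to add.
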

\begin{proof}
	Take an arbitrary $\psi \in \pfs$.
	Then $\psi F \in H(\ddiv, \Omega)$ is well-defined and we have
	\begin{align*}
		\|\tr_{F\nu}(\psi)\|_{H^{- \scriptstyle\frac{1}{2}}(\Omega)}
		&= \|\tr_{\nu}(\psi F)\|_{H^{- \scriptstyle\frac{1}{2}}(\Omega)}
		\leq \|\tr_\nu\|_{\cL\left(H(\ddiv,\Omega),H^{-1/2}(\partial \Omega)\right)} \|\psi F\|_{H(\ddiv,\Omega)}\\
		&= \|\tr_{\nu}\|_{\cL\left(H(\ddiv,\Omega), H^{-1/2}(\partial\Omega)\right)} \left( \|\psi F\|_{L^2(\Omega;\bR^d)}^2 + \|\ddiv(\psi F)\|_{L^2(\Omega)}^2 \right)^\frac{1}{2}\\
		&\leq \|\tr_{\nu}\|_{\cL\left(H(\ddiv,\Omega), H^{-1/2}(\partial\Omega)\right)} \max(1, \|F \|_{L^\infty(\Omega;\bR^d)}) \|\psi\|_{\pfs}. \qedhere
	\end{align*}
\end{proof}

As mentioned above, traces are relevant when incorporating boundary conditions.
Of particular interest are homogeneous boundary conditions, meaning the trace being zero.
To this end, we define the following space.
%
%
\begin{defn}(Perron--Frobenius--Sobolev functions with vanishing trace)\label{defn:pfz}
	Let $\Omega \subseteq \bR^d$ be a bounded, open domain with Lipschitz boundary.
	The space $H_0(\cL^*,\Omega)$ is defined by
	\begin{eq*}
		H_0(\cL^*, \Omega) := \ker(\tr_{F \nu}) = \{ \psi \in H(\cL^*, \Omega) : \tr_{F \nu}(\psi) = 0\}.
	\end{eq*}
\end{defn}
As this space is the kernel of a linear and bounded operator, it is a closed subspace of $\pfs$ and hence again a Hilbert space with respect to the same norm.

Next Green's formula is reestablished for PFS and Sobolev functions.
%
%
\begin{thm}[Green's Formula, first version]\label{thm:green_pfs_h1}
	Let $\Omega \subseteq \bR^d$ be a bounded, open domain with Lipschitz boundary.
	For all $\psi \in \pfs$ and $\varphi \in H^1(\Omega)$ \reva{we have}{}
	\begin{eq*}
		\langle \tr_{F\nu}(\psi) , \tr(\varphi) \rangle_{H^{-1/2}(\partial \Omega), H^{1/2}(\partial \Omega)}
		&= (\ddiv(\psi F), \varphi)_{L^2(\Omega)} + (\psi , \ddiv(\varphi F))_{L^2(\Omega)}\\
		&\phantom{=}~ - (\ddiv(F) \psi, \varphi)_{L^2(\Omega)}.
	\end{eq*}

\end{thm}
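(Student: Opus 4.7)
The plan is to reduce this to the classical Green's formula for $H(\ddiv)$ functions (\cref{thm:green_Sobolev_H_div}) applied to the vector field $p := \psi F$, then use the product rule to rewrite the remaining volume term in the form claimed.

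First I would verify that the right-hand side of \cref{thm:green_Sobolev_H_div} can indeed be invoked: by definition of $\pfs$ we have $\psi F \in H(\ddiv, \Omega)$, while $\varphi \in H^1(\Omega)$ by hypothesis. Applying that theorem with $p = \psi F$ yields
\begin{eq*}
	\langle \tr_\nu(\psi F), \tr(\varphi)\rangle_{H^{-1/2}(\partial\Omega), H^{1/2}(\partial\Omega)}
	= (\ddiv(\psi F), \varphi)_{L^2(\Omega)} + (\psi F, \nabla\varphi)_{L^2(\Omega;\bR^d)}.
\end{eq*}
By the definition of $\tr_{F\nu}$ in \cref{thm:trace_pfs}, the left-hand side already matches the desired expression.

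It remains to rewrite the term $(\psi F, \nabla\varphi)_{L^2(\Omega;\bR^d)} = (\psi, F\cdot\nabla\varphi)_{L^2(\Omega)}$. Since $\varphi \in H^1(\Omega)$ and $F \in C^1(\bar\Omega;\bR^d)$ by the standing assumption~\cref{eq:F_regularity}, the product rule \cref{thm:sobolev:product_rule} and the identity \cref{eq:product_rule} give $\varphi F \in H(\ddiv,\Omega)$ with
\begin{eq*}
	\ddiv(\varphi F) = F \cdot \nabla\varphi + (\ddiv F) \varphi,
\end{eq*}
so that $F \cdot \nabla\varphi = \ddiv(\varphi F) - (\ddiv F) \varphi$ pointwise a.e.\ in $\Omega$, and in particular as an identity in $L^2(\Omega)$. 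Taking the $L^2$-inner product with $\psi \in L^2(\Omega)$ yields
\begin{eq*}
	(\psi F, \nabla\varphi)_{L^2(\Omega;\bR^d)} = (\psi, \ddiv(\varphi F))_{L^2(\Omega)} - (\ddiv(F)\psi, \varphi)_{L^2(\Omega)}.
\end{eq*}
Substituting this back into the previous display gives the claim.

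I do not expect any real obstacle here: both ingredients (\cref{thm:green_Sobolev_H_div} for $\psi F$, and the product rule for $\varphi F$) are already established earlier in the paper, and the regularity $F \in C^1(\bar\Omega)$ together with boundedness of $\Omega$ ensures all products live in the correct spaces. The only point worth a moment of care is that the product rule is applied to $\varphi \in H^1(\Omega)$ (a full Sobolev function) rather than to $\psi \in \pfs$, which is precisely why the assumption $\varphi \in H^1(\Omega)$ appears in the statement and cannot be relaxed to $\varphi \in \pfs$ without additional work.
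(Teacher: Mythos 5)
Your proposal is correct and follows essentially the same route as the paper's own proof: apply \cref{thm:green_Sobolev_H_div} to the pair $\psi F \in H(\ddiv,\Omega)$ and $\varphi \in H^1(\Omega)$, then use the product rule \cref{eq:product_rule} for $\varphi F$ to rewrite $(\psi F, \nabla\varphi)_{L^2(\Omega;\bR^d)}$, and identify $\tr_\nu(\psi F)$ with $\tr_{F\nu}(\psi)$. No gaps to report.
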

\begin{proof}
	By applying \cref{thm:green_Sobolev_H_div} to $\varphi \in H^1(\Omega)$ and $\psi F \in H(\ddiv,\Omega)$, we obtain
	\begin{eq}\label{eq:tmp_0}
		\langle \tr_{\nu}(\psi F) , \tr(\varphi) \rangle_{H^{-1/2}(\partial \Omega), H^{1/2}(\partial \Omega)}
		= (\ddiv(\psi F), \varphi)_{L^2(\Omega)} + (\psi F, \nabla \varphi)_{L^2(\Omega;\bR^d)}.
	\end{eq}
	With the product rule in \cref{eq:product_rule}, we get $\ddiv(\varphi F) = F \cdot \nabla \varphi + \ddiv(F) \varphi$ and subsequently obtain from  \cref{eq:tmp_0}:
	\begin{eq*}
		(\ddiv(\psi F), \varphi)_{L^2(\Omega)} + (\psi F, \nabla \varphi)_{L^2(\Omega;\bR^d)}
		&= (\ddiv(\psi F), \varphi)_{L^2(\Omega)} + (\psi , \ddiv(\varphi F))_{L^2(\Omega)}\\
		&\phantom{=}~ - (\ddiv(F) \psi, \varphi)_{L^2(\Omega)}.
	\end{eq*}
	Together with $\tr_\nu (\psi) = \tr_{F\nu}(\psi)$, this proves the assertion.
\end{proof}

Due to the symmetry of the left-hand side of the equation in \cref{thm:green_pfs_h1}, one is tempted to interchange the roles of $\psi$ and $\varphi$ therein to extend this result to two functions in $\pfs$.
To do so, the density of $H^1(\Omega)$ in $\pfs$ is required.
Clearly, the Sobolev space is dense in $L^2(\Omega)$ with respect to its norm.
However, it is not clear yet whether the same holds true for the PFS space.
In the following theorem, we address the density of smooth functions in the PFS space.

%
%
\begin{thm}[Density of Smooth Functions]\label{thm:pfs_smooth_density}
	Let $\Omega \subseteq \bR^d$ be a bounded, open domain.
	Then, \reva{the following assertions are true}{}:
	\begin{enum}
		\item\label{enum:pfs_smooth_density} The space $C^\infty(\Omega) \cap \pfs$ is dense in $\pfs$ with respect to the $\pfs$-norm.
		\item\label{enum:pfs_smooth_density_bdry} If, moreover, $\Omega$ has a Lipschitz boundary, then the space $C^\infty(\bar \Omega)$ is dense in $\pfs$ with respect to the $\pfs$-norm.
		\item\label{enum:pfsz_smooth_density} If, moreover, $\Omega$ has a Lipschitz boundary, then the space $C_0^\infty(\Omega)$ is dense in $\pfsz$ with respect to the $\pfs$-norm.
	\end{enum}
\end{thm}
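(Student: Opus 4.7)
The plan is to adapt the classical density arguments for $H^1$ and $H(\ddiv, \Omega)$ to the PFS setting. All three assertions rest on one technical ingredient, a Friedrichs-type commutator estimate: for a standard mollifier $\eta_\varepsilon \in C_0^\infty(\bR^d)$ and $\psi \in L^2$ extended by zero outside its domain, I claim
\begin{equation*}
\ddiv\bigl((\psi \ast \eta_\varepsilon) F\bigr) - \bigl(\ddiv(\psi F)\bigr) \ast \eta_\varepsilon \;\longrightarrow\; 0 \quad \text{in } L^2_{\mathrm{loc}} \text{ as } \varepsilon \to 0.
\end{equation*}
Using the product rule \cref{eq:product_rule} applied to the smooth function $\psi \ast \eta_\varepsilon$, and noting that $F \cdot \nabla \psi := \ddiv(\psi F) - (\ddiv F)\psi \in L^2(\Omega)$ is well defined whenever $\psi \in \pfs$, this reduces to the classical commutator estimate $F \cdot \nabla(\psi \ast \eta_\varepsilon) - (F \cdot \nabla \psi) \ast \eta_\varepsilon \to 0$ in $L^2$, together with the trivial convergence $(\ddiv F)(\psi \ast \eta_\varepsilon) \to (\ddiv F)\psi$ in $L^2$. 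The commutator estimate itself requires only the Lipschitz regularity of $F$ granted by \cref{eq:F_regularity}, and this step is the main technical hurdle of the whole argument; the remainder is bookkeeping.

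For part~\ref{enum:pfs_smooth_density}, I would follow the Meyers--Serrin scheme. Define $\Omega_k := \{x \in \Omega : \mathrm{dist}(x, \partial\Omega) > 1/k\} \cap B_k(0)$ and form the open cover of $\Omega$ by the annular regions $U_k := \Omega_{k+2} \setminus \overline{\Omega_k}$. Choose a subordinate partition of unity $\{\zeta_k\} \subset C_0^\infty(\Omega)$. For $\psi \in \pfs$, the ordinary product rule \cref{thm:sobolev:product_rule} applied to $\zeta_k \psi F$ shows $\zeta_k \psi \in \pfs$ with $\ddiv(\zeta_k \psi F) = \zeta_k \ddiv(\psi F) + \psi F \cdot \nabla \zeta_k \in L^2(\Omega)$. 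Given $\delta > 0$, I would pick mollifier widths $\varepsilon_k$ small enough that each $(\zeta_k \psi) \ast \eta_{\varepsilon_k}$ remains supported compactly inside $\Omega$ and that $\|(\zeta_k \psi) \ast \eta_{\varepsilon_k} - \zeta_k \psi\|_{\pfs} < 2^{-k}\delta$; the second bound invokes the commutator lemma above. The locally finite sum $\psi_\delta := \sum_k (\zeta_k \psi) \ast \eta_{\varepsilon_k}$ is then smooth and satisfies $\|\psi - \psi_\delta\|_{\pfs} \leq \delta$.

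For part~\ref{enum:pfs_smooth_density_bdry}, I would use a finite cover $\{W_0, W_1, \dots, W_N\}$ of $\overline{\Omega}$, with $W_0 \Subset \Omega$ handled via part~\ref{enum:pfs_smooth_density}, and each $W_j$, $j \geq 1$, a coordinate neighborhood in which $\partial\Omega$ is the graph of a Lipschitz function. On each boundary patch I would pick a fixed vector $\mathbf{e}_j \in \bR^d$ pointing uniformly into $\Omega$ (such a vector exists thanks to the interior cone condition coming from the Lipschitz graph structure) and, for small $t > 0$, consider the translated function $\psi(\cdot - t\mathbf{e}_j)$, which is defined on a neighborhood of $W_j \cap \overline{\Omega}$. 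Mollifying with parameter $\varepsilon \ll t$ yields a function smooth up to $\partial\Omega \cap W_j$; pasting via a partition of unity subordinate to $\{W_j\}$ produces the $C^\infty(\overline{\Omega})$ approximant, and $\pfs$-convergence follows from strong $L^2$-continuity of translations combined with the commutator estimate applied chart by chart. Part~\ref{enum:pfsz_smooth_density} is a variant: for $\psi \in \pfsz$ I would first extend by zero to obtain $\widetilde\psi \in L^2(\bR^d)$ and observe that the vanishing trace $\tr_{F\nu}(\psi) = 0$ combined with \cref{thm:green_Sobolev_H_div} implies $\widetilde\psi F \in H(\ddiv, \bR^d)$ with $\ddiv(\widetilde\psi F) = \chi_\Omega \ddiv(\psi F)$, since the boundary pairing that would otherwise appear vanishes. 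I would then apply the same boundary-chart translation as above, but now orienting each local shift so that the translate of $\widetilde\psi$ has support strictly inside $\Omega$; mollifying with a sufficiently small parameter produces an approximant in $C_0^\infty(\Omega)$, and the same commutator-based error control yields $\pfs$-convergence.
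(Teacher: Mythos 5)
Your overall plan is sound, and for parts \cref{enum:pfs_smooth_density} and \cref{enum:pfs_smooth_density_bdry} it is essentially the paper's own argument: the paper's appendix first proves exactly your Friedrichs-type commutator statement (as a standalone lemma, with a uniform bound for $u \in L^2$, convergence for $u \in C^1(\bar\Omega)$, and a density step, using only the Lipschitz bound on $F$ from \cref{eq:F_regularity}), then runs a Meyers--Serrin partition-of-unity argument for the interior statement and a boundary-chart argument with an inward shift proportional to the mollification radius for $C^\infty(\bar\Omega)$; your reduction of the commutator to the classical Friedrichs estimate via $F\cdot\nabla\psi := \ddiv(\psi F) - (\ddiv F)\psi$ is correct, the only difference being that you cite the classical lemma where the paper proves it from scratch. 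Two small remarks on \cref{enum:pfs_smooth_density_bdry}: the translate should be $\psi(\cdot + t\mathbf{e}_j)$ (with $\mathbf{e}_j$ pointing into $\Omega$), not $\psi(\cdot - t\mathbf{e}_j)$, so that values are sampled inside the domain; and ``$L^2$-continuity of translations plus the commutator estimate'' glosses over the fact that translation does not commute with $\ddiv(\cdot\,F)$ either --- the clean way, as in the paper, is to treat the shifted mollifier $\rho_\eps(\cdot - \alpha\eps e_d)$ as a single kernel family and rerun the same commutator estimate for it. Where you genuinely diverge is \cref{enum:pfsz_smooth_density}: the paper argues indirectly, in the spirit of \cite[Theorem 2.4]{bib:GiraultRaviart_FEMNavierStokes}, by taking a functional $\xi \in \pfsz^*$ vanishing on $C_0^\infty(\Omega)$, representing it through the Riesz isomorphism, showing the representer lies in $\pfs$, and using part \cref{enum:pfs_smooth_density_bdry} together with Green's formula to conclude $\xi = 0$ on $\pfsz$; you instead give a direct construction, using the vanishing normal trace and \cref{thm:green_Sobolev_H_div} to show the zero extension satisfies $\widetilde\psi F \in H(\ddiv,\bR^d)$ with no boundary contribution, and then repeating the inward-translation-plus-mollification scheme to land in $C_0^\infty(\Omega)$. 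Both routes work: yours is constructive and self-contained modulo the chart bookkeeping (which must be done once more for the extended function), while the paper's duality argument is shorter because it recycles \cref{enum:pfs_smooth_density_bdry} and the already-established Green formula \cref{thm:green_pfs_h1} instead of redoing any approximation near the boundary.
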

The statement in \cref{thm:pfs_smooth_density}\,\cref{enum:pfs_smooth_density} is a direct counterpart to the Meyers--Serrin theorem, see \cite{bib:MeyersSerrin}.
Due to the technical nature and multiple steps involved in proving~\cref{thm:pfs_smooth_density}, it is included in the appendix.
We will just use it here to propose the following extension of \cref{thm:green_pfs_h1}.
%
%
\begin{thm}[Trace and Green's formula, final version]\label{thm:green_pfs}
	Let $\Omega \subseteq \bR^d$ be a bounded, open domain with Lipschitz boundary.
	Then the following statements hold:
	\begin{enum}
		\item\label{enum:green_extension} The bilinear form $a \colon \pfs \times H^1(\Omega) \to \bR$ defined by $(\psi, \varphi) \mapsto \langle \tr_{F \nu}(\psi) , \tr(\varphi) \rangle$ can be continuously extended to $\pfs \times \pfs$.
		Then we may write $a(\psi, \varphi) = \int_{\partial\Omega} \psi \varphi F \nu \, \dH^{d - 1}$ for all $\psi, \varphi \in \pfs$ and get
		\begin{eq*}
			\int_{\partial\Omega} \psi \varphi F \nu \, \dH^{d - 1} =
			(\ddiv(\psi F), \varphi)_{L^2(\Omega)} + (\psi , \ddiv(\varphi F))_{L^2(\Omega)} - (\ddiv(F) \psi, \varphi)_{L^2(\Omega)}.
		\end{eq*}
		\item\label{enum:green_pfsz} For all $\psi \in \pfsz$ and $\varphi \in \pfs$, we obtain
		\begin{eq*}
			(\ddiv(F) \psi, \varphi)_{L^2(\Omega)} = (\ddiv(\psi F), \varphi)_{L^2(\Omega)} + (\psi, \ddiv(\varphi F))_{L^2(\Omega)}.
		\end{eq*}
		\item\label{enum:trace_extension} If the no-outflow condition \cref{eq:F_no_outflow} is fulfilled, then there exists a bounded linear operator $\tr \colon \pfs \to L^2(\partial \Omega, |F \nu| \, \dH^{d-1})$ with $\tr(\psi) = \psi|_{\partial \Omega}$ for all $\psi \in C^1(\bar \Omega)$, where
		\begin{eq*}
			L^2(\partial \Omega, |F \nu| \, \dH^{d-1}) := \left\{ u \colon \partial \Omega \to \bR : \int_{\partial\Omega} |u|^2 |F \nu| \dH^{d - 1} < \infty \right\}.
		\end{eq*}
	\end{enum}
\end{thm}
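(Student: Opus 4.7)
The plan is to bootstrap all three parts from \cref{thm:green_pfs_h1} together with the density result \cref{thm:pfs_smooth_density}\,\cref{enum:pfs_smooth_density_bdry}, which supplies $C^\infty(\bar\Omega)$ (and hence $H^1(\Omega)$) as a dense subspace of $\pfs$. For \cref{enum:green_extension}, I would first observe that the right-hand side of \cref{thm:green_pfs_h1}, viewed as a bilinear form
\[
R(\psi,\varphi) := (\ddiv(\psi F), \varphi)_{L^2(\Omega)} + (\psi, \ddiv(\varphi F))_{L^2(\Omega)} - (\ddiv(F)\psi, \varphi)_{L^2(\Omega)},
\]
is continuous on $\pfs \times \pfs$: the first two terms are bounded by $\|\psi\|_\pfs\|\varphi\|_\pfs$ directly from the definition of the PFS-norm, and the third by $\|\ddiv F\|_{L^\infty(\Omega)}\|\psi\|_\pfs\|\varphi\|_\pfs$, which is finite by \cref{eq:F_regularity}. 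Since $R$ agrees with $a$ on $\pfs \times H^1(\Omega)$ by \cref{thm:green_pfs_h1}, and $H^1(\Omega)$ is dense in $\pfs$, $R$ is the (unique) continuous extension of $a$. When $\psi,\varphi \in C^\infty(\bar\Omega)$, the classical Green identity rewrites $R(\psi,\varphi)$ as the genuine boundary integral $\int_{\partial\Omega}\psi\varphi\,F\nu\,\dH^{d-1}$, which justifies the suggestive notation for general PFS pairs.

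Part \cref{enum:green_pfsz} is then immediate: if $\psi \in \pfsz$ then $\tr_{F\nu}\psi = 0$ by definition, so \cref{thm:green_pfs_h1} gives $a(\psi,\phi) = 0$ for every $\phi \in H^1(\Omega)$. By \cref{enum:green_extension} and density of $H^1(\Omega)$ in $\pfs$, the extension $R$ also vanishes identically in the second slot, i.e., $R(\psi,\varphi) = 0$ for every $\varphi \in \pfs$. Rearranging the three terms of $R$ produces the identity.

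For \cref{enum:trace_extension}, I would take $\psi \in C^\infty(\bar\Omega)$ and specialize \cref{enum:green_extension} to $\varphi = \psi$:
\[
\int_{\partial\Omega}\psi^2\,F\nu\,\dH^{d-1} = 2(\ddiv(\psi F),\psi)_{L^2(\Omega)} - (\ddiv(F)\psi,\psi)_{L^2(\Omega)}.
\]
The no-outflow condition \cref{eq:F_no_outflow} gives $|F\nu| = -F\nu$ on $\partial\Omega$, so the left-hand side equals $-\int_{\partial\Omega}\psi^2|F\nu|\,\dH^{d-1}$. Cauchy--Schwarz together with Young's inequality then yields
\[
\int_{\partial\Omega}\psi^2|F\nu|\,\dH^{d-1} \leq \bigl(1 + \|\ddiv F\|_{L^\infty(\Omega)}\bigr)\|\psi\|_\pfs^2.
\]
Consequently, the classical restriction map $\psi \mapsto \psi|_{\partial\Omega}$ is bounded from $(C^\infty(\bar\Omega),\|\cdot\|_\pfs)$ into the complete weighted $L^2$-space. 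Using density of $C^\infty(\bar\Omega)$ in $\pfs$ once more, this map extends uniquely and continuously to the desired $\tr \colon \pfs \to L^2(\partial\Omega,|F\nu|\,\dH^{d-1})$.

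The main subtlety is actually hidden in \cref{enum:green_extension}: the original $a$ is a priori only bounded with respect to $\|\cdot\|_\pfs \times \|\cdot\|_{H^1(\Omega)}$ (via the $H^{\pm 1/2}$-pairing), which is \emph{not} continuous in the ambient $\pfs$-topology, so a naïve extension by continuity fails. The role of \cref{thm:green_pfs_h1} is precisely to repackage $a$ into a form that is bounded with respect to the product of $\pfs$-norms, making density available. In \cref{enum:trace_extension}, the no-outflow assumption is the linchpin: without a sign on $F\nu$ the quadratic integral $\int\psi^2 F\nu\,\dH^{d-1}$ is indefinite and cannot control $\int\psi^2|F\nu|\,\dH^{d-1}$; the quadratic trick $\varphi = \psi$ only gives a workable one-sided estimate once $|F\nu| = -F\nu$ is known.
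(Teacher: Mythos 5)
Your proposal is correct and follows essentially the same route as the paper: use \cref{thm:green_pfs_h1} to rewrite the boundary pairing as a bilinear form bounded by the product of $\pfs$-norms, invoke the density of $C^\infty(\bar\Omega)$ (hence $H^1(\Omega)$) from \cref{thm:pfs_smooth_density}, extend by continuity, and then obtain \cref{enum:green_pfsz} by vanishing of $\tr_{F\nu}$ and \cref{enum:trace_extension} by taking $\varphi=\psi$ together with the sign from the no-outflow condition. The only difference is cosmetic: the paper packages the extension as an operator $A\colon H^1(\Omega)\to\pfs^*$ and cites an extension theorem, whereas you exhibit the right-hand side $R$ directly as the continuous extension, which is equivalent.
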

\begin{proof}
	%
	%
	ad \cref{enum:green_extension}.
	We define the operator $A \colon H^1(\Omega) \to \pfs^*$ by \begin{eq*}
		\langle A\varphi, \psi \rangle := a(\psi, \varphi) = \langle \tr_{F\nu}(\psi), \tr(\varphi)\rangle.
	\end{eq*}
	By \cref{thm:green_pfs_h1}, we obtain
	\begin{eq*}
		|\langle A \varphi, \psi \rangle|
		&\leq \left|(\psi, \ddiv(\varphi F))_{L^2(\Omega)} + (\ddiv(\psi F), \varphi)_{L^2(\Omega)} - (\ddiv(F)\psi, \varphi)_{L^2(\Omega)}\right|\\
		&\leq \|\psi\|_{L^2(\Omega)} \|\ddiv(\varphi F)\|_{L^2(\Omega)} + \|\ddiv(\psi F)\|_{L^2(\Omega)} \|\varphi\|_{L^2(\Omega)}\\
		&\phantom{\leq}~ + \|\ddiv F\|_{L^\infty(\Omega)} \|\psi\|_{L^2(\Omega)} \|\varphi\|_{L^2(\Omega)}\\
		&\leq (1 + \|\ddiv F\|_{L^\infty(\Omega)}) \|\psi\|_{\pfs} \|\varphi\|_{\pfs}.
	\end{eq*}
	This guarantees that $\|A \varphi\|_{\pfs^*} \leq (1 + \|\ddiv F\|_{L^\infty(\Omega)}) \|\varphi\|_{\pfs}$ for all $\varphi \in H^1(\Omega)$.
	As $C^\infty(\bar \Omega) \subseteq H(\cL^*, \Omega)$ is dense according to \cref{thm:pfs_smooth_density}\,\cref{enum:pfs_smooth_density_bdry}, we obtain that $H^1(\Omega)$ is dense in $\pfs$.
	Hence, there exists a unique, continuous linear extension $A$ on $\pfs$, see \cite[Theorem 3.1--3]{bib:Ciarlet}.

	%
	%
	ad \cref{enum:green_pfsz}.
	Let $\psi \in \pfsz$ and $\varphi \in \pfs$.
	Then we get $\tr_{F \nu}(\psi) = 0$ and hence $\int_{\partial \Omega} \psi \varphi F \nu \, \dH^{d - 1} = 0$.
	The application of \cref{thm:green_pfs}\,\cref{enum:green_extension} yields the assertion.

	%
	%
	ad \cref{enum:trace_extension}.
	We apply \cref{thm:green_pfs}\,\cref{enum:green_extension} with $\varphi = \psi \in C^1(\bar \Omega)$.
	This yields
	\begin{eq*}
		\int_{\partial \Omega} \psi^2 |F \nu| \, \dH^{d - 1}
		&= - \int_{\partial \Omega} \psi^2 F \nu \, \dH^{d - 1} \\
		&=  (\ddiv(F) \psi, \psi)_{L^2(\Omega)} - 2 (\ddiv(\psi F), \psi)_{L^2(\Omega)}\\
		&\leq \|\ddiv(F)\|_{L^\infty(\Omega)} \|\psi\|_{L^2(\Omega)}^2 + 2 \| \ddiv(\psi F) \|_{L^2(\Omega)} \|\psi\|_{L^2(\Omega)}\\
		&\leq \left(\|\ddiv F\|_{L^\infty(\Omega)}^2 + 4 \right)^\frac{1}{2} \|\psi\|_{\pfs} \|\psi\|_{L^2(\Omega)}\\
		&\leq \left(\|\ddiv F\|_{L^\infty(\Omega)}^2 + 4 \right)^\frac{1}{2} \|\psi\|_{\pfs}^2.
	\end{eq*}
	This implies that $\|\psi|_{\partial \Omega}\|_{L^2(\partial \Omega, |F \nu|\dH)} \leq (\|\ddiv(F)\|_{L^\infty(\Omega)}^2 + 4)^\frac{1}{4} \|\psi\|_{\pfs}$ for all $\psi \in C^1(\bar \Omega)$.
	As \cref{thm:pfs_smooth_density}\,\cref{enum:pfs_smooth_density_bdry} guarantees $C^1(\bar \Omega)$ to be dense in $\pfs$, the trace operator can be extended to $\pfs$.
\end{proof}

It should be mentioned that extensions of the trace operator are well-known for the transport equation, see, e.g., \cite[Chapitre 2, Section 4]{bib:Bardos_Transport} and \cite[Proposition 2.1]{bib:DahmenHuangSchwabWelper}.
Here, however, our analysis benefits from the use of the normal trace operator defined for $H(\ddiv)$ functions.
Moreover, to our best knowledge, the density results in \cref{thm:pfs_smooth_density} have not been shown before.

With these results at hand, we close the discussion of the Perron--Frobenius--Sobolev space and draw our attention to the promised existence and uniqueness for the KvN equation.
%
%
\section{Existence and Uniqueness for the Koopman--von Neumann Equation}\label{sec:existence_kvn}
%
%
As semigroup theory is defined for complex Banach spaces, we define the spaces $L^2_\bC(\Omega)$, $\pfsc$, $\pfscz$ to be the sets of complex-valued functions on $\Omega$ such that both the real and imaginary parts are in $L^2(\Omega)$, $H^1(\Omega)$, etc., respectively.
In what follows, we want to apply semigroup theory on $X := L_\bC^2(\Omega)$ equipped with the inner product
\begin{eq*}
	(u , v)_{L^2_\bC(\Omega)} = \int_\Omega \bar u v \, \dx,
\end{eq*}
where $\bar u$ is the (pointwise) conjugate of $u$.
The differential operators $\nabla$ and $\ddiv$ are both separately applied to the real part and imaginary part.
For clarification, we mention that the vector field $F \colon \bar\Omega \to \bR^d$ does \emph{not} take values in the complex numbers.
%
%
\subsection{Koopman--von Neumann Generator}\label{ssec:kvn_generator}

For the definition of the KvN generator, we return to the spaces associated with the real-valued functions.
By construction, the Perron--Frobenius generator $\cL^* \colon \pfs \to L^2(\Omega)$ is well-defined and continuous.
Together with \cref{eq:kvn_textbook} this admits the following definition of the KvN generator.
%
\begin{defn}
	Let $\Omega \subseteq \bR^d$ be a bounded, open domain with Lipschitz boundary.
	The \emph{Koopman--von Neumann generator} $\kvnop \colon \pfs \to \pfs^*$ is defined by
	\begin{eq}\label{eq:kvnop}
		\langle \kvnop \psi, \varphi \rangle_{\pfs^*,\pfs}
		&= \frac{1}{2}\left( (\cL^* \psi, \varphi)_{L^2(\Omega)} - (\psi, \cL^* \varphi)_{L^2(\Omega)} \right)\\
		&= - \frac{1}{2} (\ddiv(\psi F), \varphi)_{L^2(\Omega)} + \frac{1}{2} (\psi, \ddiv(\varphi F))_{L^2(\Omega)}.
	\end{eq}
\end{defn}
Clearly, we have $\langle \kvnop \psi , \varphi \rangle = - \langle \kvnop \varphi, \psi \rangle$.
This guarantees that the generator is skew-symmetric.
From this, we deduce for the dual of the KvN operator that $(\kvnop)^* = - \kvnop$.
Here, we used the reflexivity of the PFS space.

For semigroup theory, we aim to formulate the KvN generator as an operator taking values in $L^2(\Omega)$.
For this sake, we use \cref{thm:green_pfs}.
%
%
\begin{thm}[Koopman--von Neumann generator on Perron--Frobenius--Sobolev spaces]\label{thm:kvnop}
	Let $\Omega \subseteq \bR^d$ be a bounded, open domain with Lipschitz boundary.
	Then the following holds:
	\begin{enum}
		\item\label{enum:kvnop_reformulation} For all $\psi , \varphi \in \pfs$, we have
		\begin{eq*}
			\langle \kvnop \psi, \varphi \rangle_{\pfs^*, \pfs} &= - (\ddiv(\psi F), \varphi)_{L^2(\Omega)} + \frac{1}{2}(\ddiv(F) \psi, \varphi)_{L^2(\Omega)}\\
			&\phantom{=}~ + \frac{1}{2}\int_{\partial \Omega} \psi \varphi F\nu \, \dH^{d - 1}.
		\end{eq*}
		\item\label{enum:kvnop_L2} The restriction $\kvnop \colon \pfsz \to L^2(\Omega)$ is a bounded linear operator with
		\begin{eq*}
			\kvnop \psi = -\ddiv(\psi F) + \frac{1}{2}\ddiv(F) \psi.
		\end{eq*}
	\end{enum}
\end{thm}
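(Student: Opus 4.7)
The plan is to derive both parts directly from the definition \cref{eq:kvnop} by applying the two versions of Green's formula already established in \cref{thm:green_pfs}.

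For part \cref{enum:kvnop_reformulation}, I start from the defining identity
\begin{eq*}
  \langle \kvnop \psi, \varphi \rangle_{\pfs^*, \pfs}
  = -\tfrac{1}{2}(\ddiv(\psi F), \varphi)_{L^2(\Omega)}
  + \tfrac{1}{2}(\psi, \ddiv(\varphi F))_{L^2(\Omega)}
\end{eq*}
and use the extended Green formula of \cref{thm:green_pfs}\,\cref{enum:green_extension} to rewrite the second term. That identity reads
\begin{eq*}
  (\psi, \ddiv(\varphi F))_{L^2(\Omega)}
  = \int_{\partial\Omega} \psi\varphi\, F\nu \, \dH^{d-1}
  - (\ddiv(\psi F), \varphi)_{L^2(\Omega)}
  + (\ddiv(F)\psi, \varphi)_{L^2(\Omega)}.
\end{eq*}
Substituting and collecting terms immediately yields the claimed formula with a single interior divergence term, a zeroth-order term involving $\ddiv F$, and a boundary term with coefficient $\tfrac{1}{2}$.

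For part \cref{enum:kvnop_L2}, I specialize to $\psi \in \pfsz$ and use \cref{thm:green_pfs}\,\cref{enum:green_pfsz}, which says precisely that the boundary term vanishes and
\begin{eq*}
  (\psi, \ddiv(\varphi F))_{L^2(\Omega)}
  = (\ddiv(F)\psi, \varphi)_{L^2(\Omega)} - (\ddiv(\psi F), \varphi)_{L^2(\Omega)}.
\end{eq*}
Plugging this into \cref{eq:kvnop} gives
\begin{eq*}
  \langle \kvnop \psi, \varphi \rangle
  = \bigl( -\ddiv(\psi F) + \tfrac{1}{2}\ddiv(F)\psi,\, \varphi \bigr)_{L^2(\Omega)}
  \quad \text{for all } \varphi \in \pfs.
\end{eq*}
Since $\pfs$ is dense in $L^2(\Omega)$ (containing $C_0^\infty(\Omega)$) and $-\ddiv(\psi F) + \tfrac{1}{2}\ddiv(F)\psi$ lies in $L^2(\Omega)$ (the first summand by the very definition of $\pfs$, the second by $\ddiv F \in L^\infty(\Omega)$ from \cref{eq:F_regularity}), this functional is represented by that $L^2$-element, which gives the stated pointwise formula for $\kvnop\psi$.

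Finally, boundedness of $\kvnop \colon \pfsz \to L^2(\Omega)$ follows by the triangle inequality and H\"older:
\begin{eq*}
  \|\kvnop\psi\|_{L^2(\Omega)}
  \leq \|\ddiv(\psi F)\|_{L^2(\Omega)} + \tfrac{1}{2}\|\ddiv F\|_{L^\infty(\Omega)} \|\psi\|_{L^2(\Omega)}
  \leq \bigl(1 + \tfrac{1}{2}\|\ddiv F\|_{L^\infty(\Omega)}\bigr)\|\psi\|_{\pfs}.
\end{eq*}
I do not anticipate a serious obstacle: everything has been set up in \cref{ssec:trace_green_pfs}, and the only subtlety is realizing that the second Green identity of \cref{thm:green_pfs} is exactly the tool that kills the boundary term when $\psi \in \pfsz$, thereby turning the abstract duality pairing into an $L^2$-inner product.
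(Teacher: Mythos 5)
Your proof is correct and follows essentially the same route as the paper: part (i) by substituting the extended Green formula of \cref{thm:green_pfs}\,\cref{enum:green_extension} into the definition \cref{eq:kvnop}, and part (ii) by observing that the boundary term disappears for $\psi \in \pfsz$ so that the pairing is realized by an $L^2$ function, followed by a routine norm estimate. The only cosmetic difference is your boundedness constant $1 + \tfrac{1}{2}\|\ddiv F\|_{L^\infty(\Omega)}$ versus the paper's slightly sharper $\bigl(1 + \tfrac{1}{4}\|\ddiv F\|_{L^\infty(\Omega)}^2\bigr)^{1/2}$ obtained via Cauchy--Schwarz, which is immaterial.
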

\begin{proof}
	%
	%
	ad \cref{enum:kvnop_reformulation}.
	Let $\psi, \varphi \in \pfs$.
	By \cref{thm:green_pfs}\,\cref{enum:green_extension}, we have
	\begin{eq*}
		(\psi, \ddiv(\varphi F))_{L^2(\Omega)} = (\ddiv(F) \psi, \varphi)_{L^2(\Omega)} + \int_{\partial \Omega} \psi \varphi F \nu \, \dH^{d - 1} - (\ddiv(\psi F), \varphi)_{L^2(\Omega)}.
	\end{eq*}
	Plugging this into the definition of the KvN generator in \cref{eq:kvnop} yields the desired result.

	%
	%
	ad \cref{enum:kvnop_L2}.
	The linearity of the KvN generator is clear by definition. If $\psi \in \pfsz$, we have $\int_{\partial \Omega} \psi \varphi \, \dH^{d - 1} = 0$ for all $\varphi \in \pfs$.
	This yields
	$\kvnop \psi = -\ddiv(\psi F) + \frac{1}{2}\ddiv(F) \psi \in L^2(\Omega)$.
	Moreover, the estimate
	\begin{eq*}
		\|\kvnop \psi\|_{L^2(\Omega)}
		&\leq \|\ddiv(\psi F)\|_{L^2(\Omega)} + \frac{1}{2}\|\ddiv(F)\|_{L^\infty(\Omega)} \|\psi\|_{L^2(\Omega)}\\
		&\leq \sqrt{1 + \frac{1}{4} \|\ddiv(F)\|_{L^\infty(\Omega)}^2} \|\psi\|_{\pfs}
	\end{eq*}
	shows the boundedness of the KvN generator.
\end{proof}
Here, we make two important observations:
On the one hand, the KvN generator on $\pfs$ has a part involving the trace.
Hence, it cannot take values in $L^2(\Omega)$ if the trace of $\psi$ does not vanish.
It is thus necessary to restrict the generator to the subspace $\pfsz$.
On the other hand, the KvN equation takes the form of a \emph{transport equation}.
In the latter context, the boundary is usually divided into three parts:
$\Gamma_\mp := \{ x \in \partial\Omega : F \cdot \nu \lessgtr 0\}$ and $\Gamma_0 := \{x \in \partial\Omega : F \cdot \nu = 0 \}$.
The first two sets are referred to as \emph{inflow} and \emph{outflow boundary}, whereas the third is sometimes called \emph{characteristic boundary}, see \cite{bib:DahmenHuangSchwabWelper,bib:BroersenDahmenStevenson} or \emph{solid wall}, see \cite[Subsection 2.2.1]{bib:KuzminHaemaelaeinen}.
Existence theory for this class of equations has been derived in, e.g., \cite{bib:Bardos_Transport}.
It is necessary to provide a condition on the inflow boundary, but it is in general \emph{not} possible to enforce one on the other parts.
Here, we need to force the traces of $\psi F \cdot \nu$ to take the value zero on the entire boundary for the reasons outlined above.
This is only feasible under the condition in \cref{eq:F_no_outflow}, which is already relevant to guarantee that the trajectories of \cref{eq:ode} are contained in $\bar\Omega$, cf.~\cref{ssec:derivation_kvn}.
In a physical sense, this setting can be interpreted as a particle to be confined in the closed domain.
%
%
\subsection{Koopman--von Neumann Equation}\label{ssec:existence_kvn}
Now, we turn to the evolution equation \cref{eq:evo} driven by the KvN operator.
First, we define the KvN generator for complex-valued functions $\kvnop \colon \pfscz \to L^2_\bC(\Omega)$ by
\begin{eq*}
	\kvnop \psi := \kvnop(\re \psi) + \ic \kvnop(\im \psi).
\end{eq*}
Here, we use the same notation for the real and complex case.
Then, the inner product with $\varphi \in L^2_\bC(\Omega)$ reads
\begin{eq*}
	(\kvnop \psi , \varphi)_{L^2_\bC(\Omega)} = - \int_\Omega \ddiv(\bar \psi F) \varphi \, \mathrm{d}x + \frac{1}{2} \int_\Omega \ddiv(F) \bar \psi \varphi \, \mathrm{d}x.
\end{eq*}
Based on the discussion in the previous subsection, we formulate the following evolution equation
\begin{eq}\label{eq:kvn_evo}
	\partial_t \psi &= \kvnop \psi, &&\text{for all~} t \geq 0 \text{~in~} \Omega ,\\
	\psi F \cdot \nu      &= 0,           &&\text{for all~} t \geq 0 \text{~on~} \partial \Omega ,\\
	\psi(0)         &= \psi_0,      &&\text{in~} \Omega.
\end{eq}
Our goal is to use \cref{cor:lumer_phillips_cor}.
In what follows, we verify its assumptions.
%
%
\begin{lem}[Densely defined, closed graph]\label{lem:kvnop_domain_gph}
	The KvN generator is a densely defined, closed, linear operator $\kvnop \colon \cD(\kvnop) \subseteq L^2_\bC(\Omega) \to L^2_\bC(\Omega)$ with $\cD(\kvnop) := \pfszc$.
\end{lem}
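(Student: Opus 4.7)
The plan is as follows. Linearity of $\kvnop$ is immediate from its definition via real and imaginary parts, so only density and closedness demand attention. For density, I would invoke \cref{thm:pfs_smooth_density}\,\cref{enum:pfsz_smooth_density}, which places $C_0^\infty(\Omega)$ inside $\pfsz$; taking complex combinations puts $\{u + \ic v : u, v \in C_0^\infty(\Omega)\}$ inside $\pfszc$, and the latter set is already dense in $L^2_\bC(\Omega)$.

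The heart of the argument is the closed-graph property. Given $(\psi_n) \subseteq \pfszc$ with $\psi_n \to \psi$ in $L^2_\bC(\Omega)$ and $\kvnop \psi_n \to g$ in $L^2_\bC(\Omega)$, I would rearrange \cref{thm:kvnop}\,\cref{enum:kvnop_L2} as $\ddiv(\psi_n F) = -\kvnop \psi_n + \frac{1}{2}(\ddiv F)\,\psi_n$. Since $\ddiv F \in L^\infty(\Omega)$ and $F$ is bounded, the right-hand side converges in $L^2_\bC(\Omega)$ to $-g + \frac{1}{2}(\ddiv F)\psi$, while $\psi_n F \to \psi F$ in $L^2_\bC(\Omega;\bC^d)$. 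Testing against $\varphi \in C_0^\infty(\Omega)$ and passing to the limit in the distributional identity $(\psi_n F, \nabla \varphi)_{L^2_\bC} = -(\ddiv(\psi_n F), \varphi)_{L^2_\bC}$ then identifies $\psi F$ as an element of $H(\ddiv, \Omega; \bC^d)$ with $\ddiv(\psi F) = -g + \frac{1}{2}(\ddiv F)\psi$, so $\psi \in \pfsc$ and, moreover, $\psi_n \to \psi$ in the $\pfs$-norm.

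The remaining point is the boundary condition. Here I would invoke the continuity of the normal trace in \cref{thm:trace_pfs} (applied to real and imaginary parts separately): combined with $\tr_{F\nu}(\psi_n) = 0$ for every $n$ and the $\pfs$-convergence just established, this forces $\tr_{F\nu}(\psi) = 0$, placing $\psi$ in $\pfszc$. Substituting back into the formula from \cref{thm:kvnop}\,\cref{enum:kvnop_L2} then yields $\kvnop \psi = g$.

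The step requiring the most care is establishing that $\psi F$ has a weak divergence with the correct $L^2$-representative — but this is precisely the content of the algebraic rearrangement above, which converts the hypothesis on $\kvnop \psi_n$ into $H(\ddiv)$-convergence of $\psi_n F$. Beyond this observation, the argument reduces to standard continuity properties of $H(\ddiv, \Omega)$ and of $\tr_{F\nu}$, and I do not anticipate any further difficulty.
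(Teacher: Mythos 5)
Your proposal is correct and follows essentially the same route as the paper's proof: test against $\varphi \in C_0^\infty(\Omega)$, pass to the limit to identify $\ddiv(\psi F) \in L^2$, upgrade to convergence in the $\pfs$-norm, and conclude $\psi \in \pfszc$ (your appeal to continuity of $\tr_{F\nu}$ is exactly the reason the paper cites for $\pfsz$ being a closed subspace). No gaps.
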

\begin{proof}
	The proof is only shown for the real-valued case $\kvnop \colon \cD(\kvnop) \subseteq L^2(\Omega) \to L^2(\Omega)$ with $\cD(\kvnop) = \pfsz$.
	The complex case is derived by using the following arguments on real and imaginary parts, respectively.

	As $C^\infty_0(\Omega)$ is dense in $L^2(\Omega)$, we obtain the density of $\pfsz$ in $L^2(\Omega)$.
	Thus, it is left to show the closedness of $\gph{\kvnop}$ in $L^2(\Omega) \times L^2(\Omega)$.
	Let a sequence $(\psi_n, y_n)_{n \in \bN} \subseteq \gph{\kvnop}$ be given that converges in $L^2(\Omega) \times L^2(\Omega)$ to $(\psi, y)$.
	We have to show that $(\psi, y) \in \gph{\kvnop}$.
	To show $\psi \in \pfs$ we use \cref{defn:pfs} and take an arbitrary $\varphi \in C_0^\infty(\Omega)$.
	For all $n \in \bN$, we have
	\begin{eq*}
		(\psi_n F, \nabla \varphi)_{L^2(\Omega;\bR^d)}
		&= - (\ddiv(\psi_n F), \varphi)_{L^2(\Omega)}
		= (\kvnop \psi_n, \varphi)_{L^2(\Omega)} - \frac{1}{2}(\ddiv(F) \psi_n, \varphi)_{L^2(\Omega)}\\
		&= (y_n, \varphi)_{L^2(\Omega)} - \frac{1}{2}(\ddiv(F) \psi_n, \varphi)_{L^2(\Omega)}.
	\end{eq*}
	The convergence of $\psi_n \to \psi$ and $y_n \to y$ in $L^2(\Omega)$ yields
	\begin{eq*}
		(\psi F, \nabla \varphi)_{L^2(\Omega;\bR^d)}
		&= \lim_{n \to \infty} (\psi_n F, \nabla \varphi)_{L^2(\Omega;\bR^d)}
		= \lim_{n \to \infty} \left( (y_n, \varphi)_{L^2(\Omega)} - \frac{1}{2} (\ddiv(F) \psi_n , \varphi)_{L^2(\Omega)} \right)\\
		&= (y, \varphi)_{L^2(\Omega)} - \frac{1}{2} (\ddiv(F) \psi, \varphi)_{L^2(\Omega)}.
	\end{eq*}
	This implies that $y \in \pfs$ with $-\ddiv(\psi F) = y - \frac{1}{2}\ddiv(F) \psi$.
	Moreover, we estimate
	\begin{eq*}
		\|\ddiv(\psi_n F) - \ddiv(\psi F)\|_{L^2(\Omega)}
		&= \left\| \frac{1}{2} \ddiv(F) ( \psi_n - \psi) - (y_n - y) \right\|_{L^2(\Omega)}\\
		&\leq \frac{1}{2} \|\ddiv(F)\|_{L^\infty(\Omega)} \|\psi_n - \psi\|_{L^2(\Omega)} + \|y_n - y\|_{L^2(\Omega)} \to 0
	\end{eq*}
	as $n \to \infty$.
	Together with $\psi_n \to \psi$ in $L^2(\Omega)$ this yields $\psi_n \to \psi$ in $\pfs$.
	Since $\pfsz \subseteq \pfs$ is a closed subspace and $\psi_n \to \psi$ in $\pfsz$, we obtain $\psi \in \pfsz$.
	Moreover, we get
	\begin{eq*}
		\kvnop \psi
		= -\ddiv(\psi F) + \frac{1}{2} \ddiv(F) \psi
		= y.
	\end{eq*}
	Hence, we deduce $(\psi, y) \in \gph{\kvnop}$.
\end{proof}

Next, we show that the KvN operator is dissipative.
%
%
\begin{thm}[Dissipativity]\label{thm:kvn_dissipative}
	Let $\Omega \subseteq \bR^d$ be a bounded, open domain with Lipschitz boundary.
	The KvN operator $\kvnop: \pfszc \subseteq L^2_\bC(\Omega) \to L^2_\bC(\Omega)$ is dissipative.
	\reva{Even more}{}, we have $\re\left( (\kvnop \psi, \psi)_{L^2_\bC(\Omega)} \right) = 0$.
\end{thm}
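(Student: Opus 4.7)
My plan is to exploit the explicit $L^2$-valued representation of the generator given by \cref{thm:kvnop}\,\cref{enum:kvnop_L2}, namely $\kvnop\psi = -\ddiv(\psi F) + \tfrac{1}{2}\ddiv(F)\psi$ for $\psi \in \pfszc$, together with the Green-type identity from \cref{thm:green_pfs}\,\cref{enum:green_pfsz}. Since $F$ is real-valued, $\overline{\kvnop\psi} = -\ddiv(\bar\psi F) + \tfrac{1}{2}\ddiv(F)\bar\psi$, so a direct computation yields
\begin{eq*}
  (\kvnop\psi,\psi)_{L^2_\bC(\Omega)} = -\int_\Omega \ddiv(\bar\psi F)\,\psi\,\dx + \tfrac{1}{2}\int_\Omega \ddiv(F)\,|\psi|^2\,\dx.
\end{eq*}
Using $2\re z = z + \bar z$ together with $\overline{(\kvnop\psi,\psi)_{L^2_\bC(\Omega)}} = (\psi,\kvnop\psi)_{L^2_\bC(\Omega)}$, the claim reduces to the single identity
\begin{eq*}
  \int_\Omega \bigl[\ddiv(\bar\psi F)\,\psi + \bar\psi\,\ddiv(\psi F)\bigr]\,\dx = \int_\Omega \ddiv(F)\,|\psi|^2\,\dx.
\end{eq*}

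To prove this, I will first extend \cref{thm:green_pfs}\,\cref{enum:green_pfsz} to complex-valued arguments. Writing $\eta = u_1 + \ic u_2$ with $u_j \in \pfsz$ and $\varphi = v_1 + \ic v_2$ with $v_j \in \pfs$, and observing that $\ddiv(\cdot\, F)$ is $\bC$-linear, the real-valued identity applied to each of the four pairings $(u_j, v_k)$ assembles into the unconjugated form
\begin{eq*}
  \int_\Omega \ddiv(F)\,\eta\,\varphi\,\dx = \int_\Omega \ddiv(\eta F)\,\varphi\,\dx + \int_\Omega \eta\,\ddiv(\varphi F)\,\dx
\end{eq*}
for all $\eta \in \pfszc$ and $\varphi \in \pfsc$.

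Finally, I will apply this extended identity with $\eta = \psi$ and $\varphi = \bar\psi$. This choice is admissible because $\pfszc$ is closed under complex conjugation: the real and imaginary parts of $\bar\psi$ are, up to a sign, those of $\psi$, which by hypothesis lie in $\pfsz$. Substituting the resulting relation into the reduction above gives $2\re\bigl((\kvnop\psi,\psi)_{L^2_\bC(\Omega)}\bigr) = 0$, which is the asserted equality and in particular implies dissipativity. The only step that is not a pure algebraic cancellation is the complex extension of Green's formula together with the conjugation-invariance of the domain; that observation is, in essence, the analytic counterpart of the skew-symmetry of $\kvnop$ already recorded after~\cref{eq:kvnop}, which is precisely what makes a vanishing real part (rather than a mere inequality) plausible here.
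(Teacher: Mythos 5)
Your proof is correct, but it takes a somewhat different route from the paper's. The paper decomposes $\psi$ into real and imaginary parts and uses only the skew-symmetry of the real-valued generator recorded right after \cref{eq:kvnop} (namely $(\kvnop u, u)_{L^2(\Omega)} = 0$ and $(\kvnop u, v)_{L^2(\Omega)} = -(\kvnop v, u)_{L^2(\Omega)}$ for $u, v \in \pfsz$), which after expanding the four cross terms shows directly that $(\kvnop\psi,\psi)_{L^2_\bC(\Omega)} = 2\ic\,(\kvnop(\re\psi),\im\psi)_{L^2(\Omega)}$ is purely imaginary; Green's formula is not invoked again at this point, having already been consumed in the proof of \cref{thm:kvnop}. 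You instead work with the explicit divergence form from \cref{thm:kvnop}\,\cref{enum:kvnop_L2}, complexify \cref{thm:green_pfs}\,\cref{enum:green_pfsz} bilinearly (without conjugation), and test with $\varphi = \bar\psi$, which is admissible since $\pfszc$ is stable under conjugation — all of these steps check out, including the reduction of the claim to the single integral identity via $2\re z = z + \bar z$. What your route buys is that it makes the integration-by-parts mechanism, and in particular the place where the vanishing boundary term for $\psi \in \pfszc$ enters, completely explicit, essentially re-deriving the skew-symmetry in the complex setting; what the paper's route buys is brevity and the slightly stronger bookkeeping that the inner product is not merely of vanishing real part but equals the explicit purely imaginary quantity $2\ic\,(\kvnop(\re\psi),\im\psi)_{L^2(\Omega)}$.
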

\begin{proof}
	Let $\psi \in \pfscz$.
	Using the aforementioned skew-symmetry, we find
	\begin{eq*}
		(\kvnop \psi, \psi)_{L^2_\bC(\Omega)}
		&= \int_\Omega \overline{(\kvnop \psi)} \psi \, \dx\\
		&= \int_\Omega \left(\kvnop(\re(\psi)) - \ic \kvnop(\im(\psi))\right)(\re(\psi) + \ic \im(\psi)) \, \dx\\
		&= (\kvnop(\re(\psi)), \re(\psi))_{L^2(\Omega)} + (\kvnop(\im(\psi)), \im(\psi))_{L^2(\Omega)}\\
		&\phantom{=}~ + \ic (\kvnop(\re(\psi)), \im(\psi))_{L^2(\Omega)}
		- \ic (\kvnop(\im(\psi)), \re(\psi))_{L^2(\Omega)}\\
		&= 2 \ic (\kvnop(\re(\psi)), \im(\psi))_{L^2(\Omega)}.
	\end{eq*}
	This implies that $\re\left((\kvnop \psi, \psi)_{L^2_\bC(\Omega)}\right) = 0$, which completes the proof.
\end{proof}

With these results at hand, we are ready to prove the main result of this work.
%
%
\begin{thm}[Existence and uniqueness of solutions of \cref{eq:kvn_evo}]
	Let $\Omega \subseteq \bR^d$ be a bounded, open domain with Lipschitz boundary \reva{and assume the no-outflow condition \cref{eq:F_no_outflow} to hold.
	Moreover, let an initial value $\psi_0 \in \pfszc$ be given.}{}
	Then the KvN generator induces a $C_0$-semigroup of contractions $(T(t))_{t \geq 0}$ and \cref{eq:kvn_evo} has a unique solution $\psi \in C^1([0,\infty), L^2_\bC(\Omega)) \cap C([0,\infty), \pfszc)$ defined by $\psi(t) = T(t)\psi_0$.
	Moreover, for all $t \geq 0$ \reva{we have}{} $\|\psi(t)\|_{L^2_\bC(\Omega)} = \|\psi_0\|_{L^2_\bC(\Omega)}$.
\end{thm}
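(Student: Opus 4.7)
The plan is to invoke the Lumer--Phillips corollary \cref{cor:lumer_phillips_cor} to conclude that $\kvnop$ generates a $C_0$-semigroup of contractions $(T(t))_{t\geq 0}$ on $L^2_\bC(\Omega)$, and then apply \cref{thm:evo_existence_uniqueness} to obtain existence, uniqueness and the claimed regularity of $\psi(t) = T(t)\psi_0$. The three hypotheses to verify are that $\kvnop$ is densely defined and closed, that $\kvnop$ is dissipative, and that the Hilbert-space adjoint $\kvnop^*$ is dissipative.

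The first two are already in hand: \cref{lem:kvnop_domain_gph} gives density and closedness with $\cD(\kvnop) = \pfszc$, while \cref{thm:kvn_dissipative} delivers dissipativity in the sharpened form $\re(\kvnop \psi, \psi)_{L^2_\bC(\Omega)} = 0$. For the third hypothesis, combining the Green's formula \cref{thm:green_pfs} with the vanishing $F\nu$-trace inherent to $\pfszc$ yields the skew-symmetry
\begin{eq*}
	(\kvnop \psi, \varphi)_{L^2_\bC(\Omega)} = -(\psi, \kvnop \varphi)_{L^2_\bC(\Omega)} \quad\text{for all~} \psi, \varphi \in \pfszc,
\end{eq*}
so that $-\kvnop \subseteq \kvnop^*$ as unbounded operators on $L^2_\bC(\Omega)$. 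I would upgrade this to the equality $\kvnop^* = -\kvnop$ by showing the reverse inclusion of domains: for any $\varphi \in \cD(\kvnop^*)$, testing the adjoint relation first against $\psi \in C_0^\infty(\Omega) \subseteq \pfszc$ forces $\varphi F$ to admit a distributional divergence in $L^2_\bC(\Omega)$, so $\varphi \in \pfsc$; testing subsequently against general $\psi \in \pfszc$ and using \cref{thm:green_pfs} forces $\tr_{F\nu}(\varphi) = 0$ in $H^{-1/2}(\partial \Omega)$, i.e., $\varphi \in \pfszc$. Dissipativity of $\kvnop^* = -\kvnop$ is then immediate from $\re(-\kvnop \varphi, \varphi)_{L^2_\bC(\Omega)} = 0$.

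With the hypotheses verified, \cref{cor:lumer_phillips_cor} supplies the $C_0$-semigroup of contractions $(T(t))_{t\geq 0}$, and \cref{thm:evo_existence_uniqueness} produces the unique $\psi \in C^1([0,\infty), L^2_\bC(\Omega)) \cap C([0,\infty), \pfszc)$ satisfying $\partial_t \psi = \kvnop \psi$ with $\psi(0) = \psi_0$. The boundary condition $\psi F \cdot \nu = 0$ is encoded in the domain $\pfszc$, so this is exactly \cref{eq:kvn_evo}. Norm preservation then follows by differentiating $t \mapsto \|\psi(t)\|^2_{L^2_\bC(\Omega)}$ along the flow and using $\partial_t \psi = \kvnop \psi$ together with $\re(\kvnop \psi, \psi)_{L^2_\bC(\Omega)} = 0$ from \cref{thm:kvn_dissipative}.

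The main obstacle is the sharp identification $\cD(\kvnop^*) = \pfszc$: the easy inclusion $-\kvnop \subseteq \kvnop^*$ is insufficient for dissipativity of $\kvnop^*$ on its entire domain, and one must exclude the possibility that $\cD(\kvnop^*)$ strictly contains $\pfszc$. The density theorem \cref{thm:pfs_smooth_density} together with the extension of Green's formula to $\pfs \times \pfs$ in \cref{thm:green_pfs} provide exactly the tools required to close this gap. Should the domain identification prove technically awkward, an alternative is to verify the range condition $(I - \kvnop)(\pfszc) = L^2_\bC(\Omega)$ by a Lax--Milgram argument applied to an auxiliary stationary transport problem, and to appeal to the Hille--Yosida theorem instead of \cref{cor:lumer_phillips_cor}.
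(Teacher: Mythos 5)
Your overall route --- verifying the hypotheses of \cref{cor:lumer_phillips_cor} via \cref{lem:kvnop_domain_gph} and \cref{thm:kvn_dissipative}, then invoking \cref{thm:evo_existence_uniqueness} and differentiating $t \mapsto \|\psi(t)\|_{L^2_\bC(\Omega)}^2$ --- is the paper's route. The genuine gap is your identification $\cD(\kvnop^*) = \pfszc$. The first half of that argument is fine (testing against $C_0^\infty(\Omega)$ shows every $\varphi \in \cD(\kvnop^*)$ lies in $\pfsc$), but the second half fails: testing against general $\psi \in \pfszc$ does \emph{not} force $\tr_{F\nu}(\varphi) = 0$. Indeed, by \cref{thm:green_pfs}\,\cref{enum:green_pfsz} the boundary pairing $\int_{\partial\Omega}\psi\varphi F\nu\,\dH^{d-1}$ vanishes as soon as the \emph{first} argument has zero $F\nu$-trace, so for every $\varphi \in \pfsc$ (with no condition on its trace) one gets $(\kvnop\psi,\varphi)_{L^2_\bC(\Omega)} = -\bigl(\psi,\ \ddiv(\varphi F) - \tfrac{1}{2}\ddiv(F)\varphi\bigr)_{L^2_\bC(\Omega)}$ for all $\psi\in\pfszc$; no boundary condition on $\varphi$ can be extracted. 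Consequently $\cD(\kvnop^*) = \pfsc$, which in general strictly contains $\pfszc$. A concrete case: $d=1$, $\Omega=(0,1)$, $F\equiv 1$, where $\pfs = H^1(0,1)$, $\pfsz = H^1_0(0,1)$, $\kvnop\psi = -\psi'$, and $\kvnop^*\varphi = \varphi'$ with domain all of $H^1(0,1)$; there $\re(\kvnop^*\varphi,\varphi) = \tfrac{1}{2}(|\varphi(1)|^2 - |\varphi(0)|^2)$ is not $\leq 0$, the range of $\psi\mapsto\psi-\kvnop\psi$ on $\pfsz$ has codimension one, and $\kvnop$ is not a generator. So neither your domain-forcing step nor your Lax--Milgram fallback can succeed as stated: the missing ingredient is the no-outflow condition \cref{eq:F_no_outflow}, which your argument (like a literal reading of the statement) never uses, although the discussion in \cref{ssec:kvn_generator} makes clear it is needed to impose the zero $F\nu$-trace on the whole boundary.

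The correct repair keeps your structure but replaces the domain equality by a sign argument: with $\cD(\kvnop^*) = \pfsc$ and $\kvnop^*\varphi = \ddiv(\varphi F) - \tfrac{1}{2}\ddiv(F)\varphi$ identified as above, \cref{thm:green_pfs}\,\cref{enum:green_extension} applied with $\varphi=\psi$ gives, for real $\varphi\in\pfs$, $\re(\kvnop^*\varphi,\varphi)_{L^2(\Omega)} = \tfrac{1}{2}\int_{\partial\Omega}\varphi^2\,F\nu\,\dH^{d-1}$, which is $\leq 0$ under \cref{eq:F_no_outflow}; nonpositivity of the extended boundary form on all of $\pfs$ follows from its continuity together with the density of $C^\infty(\bar\Omega)$ from \cref{thm:pfs_smooth_density}\,\cref{enum:pfs_smooth_density_bdry}, and the complex case follows by splitting into real and imaginary parts. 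That is the dissipativity of $\kvnop^*$ which \cref{cor:lumer_phillips_cor} actually requires. For comparison, the paper's own proof does not attempt any identification of $\cD(\kvnop^*)$: it only records the skew-symmetry on $\pfszc\times\pfszc$ and reads off $(\kvnop)^* = -\kvnop$ from it, so you correctly flagged the crux that this alone only yields $-\kvnop \subseteq \kvnop^*$; but the way to close it is the boundary-sign argument above, not the equality $\cD(\kvnop^*)=\pfszc$, which fails whenever $F\cdot\nu$ is not identically zero on $\partial\Omega$. The remaining parts of your proposal (closedness, dissipativity of $\kvnop$, application of \cref{thm:evo_existence_uniqueness}, and norm conservation) coincide with the paper's proof.
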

\begin{proof}
	We verify the conditions of \cref{cor:lumer_phillips_cor}.
	As shown in \cref{lem:kvnop_domain_gph}, the KvN operator is densely defined in $L^2_\bC(\Omega)$ and has a closed graph in $L^2_\bC(\Omega) \times L^2_\bC(\Omega)$.
	From \cref{thm:kvn_dissipative}, we get that $\kvnop$ is dissipative.
	For $\psi, \varphi \in \pfszc$ we have
	\begin{eq*}
		(\kvnop \psi, \varphi)_{L^2_\bC(\Omega)}
		&= (\kvnop \re\psi, \re\varphi)_{L^2(\Omega)}
		+ (\kvnop \im\psi, \im\varphi)_{L^2(\Omega)}\\
		&\phantom{=}~ + \ic (\kvnop \re\psi, \im \varphi)_{L^2(\Omega)}
		- \ic (\kvnop\im\psi, \re\varphi)_{L^2(\Omega)}\\
		&= - (\kvnop \re\varphi, \re\psi)_{L^2(\Omega)}
		- (\kvnop \im\varphi, \im\psi)_{L^2(\Omega)}\\
		&\phantom{=}~ - \ic (\kvnop \im\varphi, \re\psi)_{L^2(\Omega)}
		+ \ic (\kvnop \re\varphi \im\psi)_{L^2(\Omega)} \\
		&= -(\kvnop\varphi, \psi)_{L^2_\bC(\Omega)}.
	\end{eq*}
	This shows also for the complex case that $(\kvnop)^* = -\kvnop$.
	By \cref{thm:kvn_dissipative}, we obtain that $\kvnop$ and its dual are both dissipative.
	Then, \cref{cor:lumer_phillips_cor} guarantees the existence of a contractive $C_0$-semigroup $(T(t))_{t \in [0,\infty)}$.
	By \cref{thm:evo_existence_uniqueness}, the function $\psi \colon [0,\infty) \to \pfscz$ and $t \mapsto T(t)\psi_0$ is the unique solution of \cref{eq:kvn_evo} and satisfies $\psi \in C^1([0,\infty), L^2_\bC(\Omega)) \cap C([0,\infty), \pfszc)$.
	Moreover, we deduce
	\begin{eq*}
		\frac{d}{dt} \|\psi(\cdot)\|_{L^2_\bC(\Omega)}^2 (t)
		&= (\partial_t \psi(t), \psi(t))_{L^2_\bC(\Omega)} + (\psi(t), \partial_t \psi(t))_{L^2_\bC(\Omega)}\\
		&= (\kvnop \psi (t), \psi(t))_{L^2_\bC(\Omega)} + (\psi(t), \kvnop\psi(t))_{L^2_\bC(\Omega)}\\
		&= 2 \re\left( (\kvnop \psi(t), \psi(t))_{L^2_\bC(\Omega)} \right) \\
		&= 0
	\end{eq*}
	and hence $\|\psi(t)\|_{L_\bC^2(\Omega)}^2 = \|\psi_0\|_{L^2_\bC(\Omega)}^2$ for all $t \geq 0$.
\end{proof}

%
%
\section{Conclusion and Outlook}\label{sec:conclusion}

In this article, we have proven the existence and uniqueness of solutions of the KvN equation associated with an autonomous initial value problem on a bounded, open domain with Lipschitz boundary.
To this end, we have introduced and analyzed an extension of Sobolev spaces and derived its properties.
The analysis turned out to be closely related to the one for the transport equation.

For nonautonomous ODEs, a transition to time-dependent KvN generators is required.
We are confident that this goal can be achieved for sufficiently regular vector fields.
Particularly, the density of smooth functions pointed out in \cref{thm:pfs_smooth_density} will very likely be useful for this purpose.
The latter might also be a valuable addition to the theory of transport equations.

As there is a great variety of dynamical systems, we must expect a great variety in the properties of the KvN semigroup, too.
For this reason, its spectral properties as well as the ones for the generator are of great interest.
\begin{envrevb}
	The connection with transport equations opens new avenues for the numerical solution of the KvN equation and its implementation on a quantum computer.
	For this we give a brief sketch:
	First, we discretize the KvN equation in space using methods such as finite differences, finite elements, or finite volumes.
	This results in a discretized Koopman--von Neumann operator represented as a matrix.
	Since the associated matrix exponential cannot generally be computed analytically, we employ approximation methods like the Trotter product formula.
	This approach may allow to decompose the operator into one- and two-qubit operations suitable for execution on a quantum computer.
	These aspects, however, will be analyzed in future work.
\end{envrevb}
%
%
\section*{Acknowledgments}
The authors thank Sebastian Knebel and Arwed Steuer for their constructive feedback on an earlier version of this manuscript. 

\section*{Funding and Competing Interests}
This study was funded within the \emph{Einstein Research Unit Perspectives of a quantum digital transformation: Near-term quantum computational devices and quantum processors} and the \emph{QuantERA II Programme} that has received funding from the European Union's Horizon 2020 research and innovation programme under Grant Agreement No. 101017733.
The authors have no competing interests to declare that are relevant to the content of this article.
%
%
\printbibliography

@book{bib:Teschl_ODE,
  title={Ordinary Differential Equations and Dynamical Systems},
  author={Teschl, G.},
  isbn={9780821883280},
  lccn={2012015024},
  series={Graduate studies in mathematics},
  year={2012},
  publisher={American Mathematical Society}
}

@article{bib:Kirszbraun,
  title={{\"U}ber die zusammenziehende und {L}ipschitzsche {T}ransformationen},
  author={Kirszbraun, M.},
  journal={Fundamenta Mathematicae},
  volume={22},
  number={1},
  pages={77--108},
  year={1934}
}

@book{bib:KuzminHaemaelaeinen,
  title={Finite Element Methods for Computational Fluid Dynamics: A Practical Guide},
  author={Kuzmin, D. and H\"am\"al\"ainen, J.},
  isbn={9781611973600},
  lccn={2014029504},
  series={Computational Science and Engineering},
  year={2014},
  publisher={Society for Industrial and Applied Mathematics}
}

@article{bib:BroersenDahmenStevenson,
  title={On the stability of DPG formulations of transport equations},
  author={Broersen, D. and Dahmen, W. and Stevenson, R.},
  journal={Mathematics of Computation},
  volume={87},
  number={311},
  pages={1051--1082},
  year={2018}
}

@article{bib:DahmenHuangSchwabWelper,
  title={Adaptive Petrov--Galerkin methods for first order transport equations},
  author={Dahmen, W. and Huang, C. and Schwab, C. and Welper, G.},
  journal={SIAM journal on numerical analysis},
  volume={50},
  number={5},
  pages={2420--2445},
  year={2012},
  publisher={SIAM}
}

@book{bib:AdamsFournier,
  title={Sobolev Spaces},
  author={Adams, R. A. and Fournier, J. J. F.},
  isbn={9780080541297},
  series={ISSN},
  year={2003},
  edition={Second},
  publisher={Elsevier Science}
}

@book{bib:EngelNagel,
  title={One-Parameter Semigroups for Linear Evolution Equations},
  author={Engel, K.-J. and Nagel, R.},
  isbn={9780387984636},
  lccn={99015366},
  series={Graduate Texts in Mathematics},
  year={1999},
  publisher={Springer New York}
}

@inproceedings{bib:Bony,
  title={{Principe du maximum, in{\'e}galit{\'e} de Harnack et unicit{\'e} du probleme de Cauchy pour les op{\'e}rateurs elliptiques d{\'e}g{\'e}n{\'e}r{\'e}s}},
  author={Bony, J.-M.},
  booktitle={Annales de l'institut Fourier},
  volume={19},
  number={1},
  pages={277--304},
  year={1969}
}

@article{bib:Redheffer,
  title={{The Theorems of Bony and Brezis on Flow-Invariant Sets}},
  author={Redheffer, R. M.},
  journal={The American Mathematical Monthly},
  volume={79},
  number={7},
  pages={740--747},
  year={1972},
  publisher={JSTOR}
}

@book{bib:WalterThompson_ODEs,
  title={{Ordinary Differential Equations}},
  author={Walter, W.},
  translator={Thompson, R.},
  isbn={978-0-387-98459-9},
  lccn={98004754},
  series={Graduate Texts in Mathematics},
  year={1998},
  publisher={Springer New York}
}

@book{bib:AubinCellina_DifferentiaInclusions,
  title={{Differential Inclusions: Set-Valued Maps and Viability Theory}},
  author={Aubin, J. P. and Cellina, A.},
  isbn={9783540131052},
  lccn={84001327},
  series={Grundlehren der mathematischen Wissenschaften},
  year={1984},
  publisher={Springer Berlin Heidelberg}
}

@inproceedings{bib:BoganovBogdanovaFastovetsLukichev,
  title={Quantum approach to the dynamical systems modeling},
  author={Bogdanov, Y. I. and Bogdanova, N. A. and Fastovets, D. V. and Lukichev, V. F.},
  booktitle={International Conference on Micro-and Nano-Electronics 2018},
  volume={11022},
  pages={728--739},
  year={2019},
  organization={SPIE}
}

@inproceedings{bib:BogdanovBognanova,
  title={The study of classical dynamical systems using quantum theory},
  author={Bogdanov, Yu. I. and Bogdanova, N. A.},
  booktitle={International Conference on Micro-and Nano-Electronics 2014},
  volume={9440},
  pages={476--484},
  year={2014},
  organization={SPIE}
}

@article{bib:vonNeumann_KoopmanErrata,
 ISSN = {0003486X},
 author = {J. von Neumann},
 journal = {Annals of Mathematics},
 number = {4},
 pages = {789--791},
 publisher = {Annals of Mathematics},
 title = {{Zusätze zur Arbeit ,,Zur Operatorenmethode...}},
 urldate = {2023-03-21},
 volume = {33},
 year = {1932}
}

@article{bib:vonNeumann_Koopman,
 ISSN = {0003486X},
 author = {J. von Neumann},
 journal = {Annals of Mathematics},
 number = {3},
 pages = {587--642},
 publisher = {Annals of Mathematics},
 title = {{Zur Operatorenmethode in der klassischen Mechanik}},
 urldate = {2023-03-21},
 volume = {33},
 year = {1932}
}

@article{bib:KoopmanvonNeumann_KvN,
  title={Dynamical Systems of Continuous Spectra},
  author={Koopman, B. O. and von Neumann, J.},
  journal={Proceedings of the National Academy of Sciences of the United States of America},
  volume={18},
  number={3},
  pages={255--263},
  year={1932}
}

@article{DJ99,
    author  = {M. Dellnitz and O. Junge},
    title   = {On the Approximation of Complicated Dynamical Behavior},
    journal = {SIAM Journal on Numerical Analysis},
    volume  = {36},
    number  = {2},
    pages   = {491--515},
    year    = {1999},
}

@inproceedings{Aharonov2003,
	author = {Aharonov, D. and Ta-Shma, A.},
	title = {Adiabatic Quantum State Generation and Statistical Zero Knowledge},
	year = {2003},
	publisher = {Association for Computing Machinery},
	doi = {10.1145/780542.780546},
	booktitle = {Proceedings of the Thirty-Fifth Annual ACM Symposium on Theory of Computing},
	pages = {20--29},
	series = {STOC '03}
}

@article{Childs2019,
	author = {Childs, A. and Liu, J.-P.},
	year = {2020},
	title = {Quantum spectral methods for differential equations},
	doi={10.1007/s00220-020-03699-z},
	journal={Commun. Math. Phys.},
	volume={375},
	pages={1427--1457},
	publisher={Springer}
}

@article{Bauer2020,
	author = {Bauer, B. and Bravyi, S. and Motta, M. and Chan, G. K.-L.},
	title = {Quantum Algorithms for Quantum Chemistry and Quantum Materials Science},
	journal = {Chemical Reviews},
	volume = {120},
	number = {22},
	pages = {12685-12717},
	year = {2020},
	doi = {10.1021/acs.chemrev.9b00829}
}

@article{Benioff1980,
	title = {The computer as a physical system: A microscopic quantum mechanical Hamiltonian model of computers as represented by Turing machines},
	author = {P. Benioff},
	journal = {Journal of Statistical Physics},
	volume = {22},
	issue = {5},
	pages = {563--591},
	year = {1980},
	publisher = {Springer},
	doi = {10.1007/BF01011339}
}

@article{Berry2007,
    author={Berry, D. W. and Ahokas, G. and Cleve, R. and Sanders, B. C.},
    title = {Efficient Quantum Algorithms for Simulating Sparse {H}amiltonians},
    journal = {Commun. Math. Phys.},
    volume = {70},
    pages={359--371},
    year={2007},
    doi={10.1007/s00220-006-0150-x}
}

@article{Berry2017,
	title={Quantum algorithm for linear differential equations with exponentially improved dependence on precision},
	author={Berry, D. W. and Childs, A. M. and Ostrander, A. and Wang, G.},
	journal={Communications in Mathematical Physics},
	volume={356},
	pages={1057--1081},
	year={2017},
	publisher={Springer},
	doi={10.1007/s00220-017-3002-y}
}

@article{Cerezo2022,
	author = {Cerezo, M. and Verdon, G. and Huang, H.-Y. and Cincio, L. and Coles, P.},
	year = {2022},
	pages = {567--576},
	title = {Challenges and opportunities in quantum machine learning},
	volume = {2},
	journal = {Nature Computational Science},
	doi = {10.1038/s43588-022-00311-3}
}

@article{Coyle2021,
	doi = {10.1088/2058-9565/abd3db},
	year = 2021,
	publisher = {{IOP} Publishing},
	volume = {6},
	number = {2},
	pages = {024013},
	author = {B. Coyle and M. Henderson and J. Chan Jin Le and N. Kumar and M. Paini and E. Kashefi},
	title = {Quantum versus classical generative modelling in finance},
	journal = {Quantum Science and Technology}
}

@article{Feynman1982,
	title = {Simulating physics with computers},
	author = {R. P. Feynman},
	journal = {International Journal of Theoretical Physics},
	volume = {21},
	issue = {6},
	pages = {467--488},
	year = {1982},
	doi = {10.1007/BF02650179}
}

@article{Giannakis2022,
	title = {Embedding classical dynamics in a quantum computer},
	author = {Giannakis, D. and Ourmazd, A. and Pfeffer, P. and Schumacher, J. and Slawinska, J.},
	journal = {Phys. Rev. A},
	volume = {105},
	issue = {5},
	pages = {052404},
	year = {2022},
	publisher = {American Physical Society},
	doi = {10.1103/PhysRevA.105.052404}
}

@article{Harrow2009,
	title = {Quantum Algorithm for Linear Systems of Equations},
	author = {Harrow, A. W. and Hassidim, A. and Lloyd, S.},
	journal = {Phys. Rev. Lett.},
	volume = {103},
	issue = {15},
	pages = {150502},
	year = {2009},
	publisher = {American Physical Society},
	doi = {10.1103/PhysRevLett.103.150502}
}

@article{Joseph20,
    IDS = {bib:Joseph},
	author    = {I. Joseph},
	title     = {Koopman--von {N}eumann approach to quantum simulation of nonlinear classical dynamics},
	journal   = {Physical Review Research},
	publisher = {American Physical Society},
	volume    = {2},
	issue     = {4},
	pages     = {043102},
	year      = {2020},
	doi       = {10.1103/PhysRevResearch.2.043102},
}

@article{Krovi2023,
	doi = {10.22331/q-2023-02-02-913},
	title = {Improved quantum algorithms for linear and nonlinear differential equations},
	author = {Krovi, H.},
	journal = {{Quantum}},
	publisher = {{Verein zur F{\"{o}}rderung des Open Access Publizierens in den Quantenwissenschaften}},
	volume = {7},
	pages = {913},
	year = {2023}
}

@article{Kyriienko2021,
	title = {Solving nonlinear differential equations with differentiable quantum circuits},
	author = {Kyriienko, O. and Paine, A. E. and Elfving, V. E.},
	journal = {Phys. Rev. A},
	volume = {103},
	issue = {5},
	pages = {052416},
	year = {2021},
	publisher = {American Physical Society},
	doi = {10.1103/PhysRevA.103.052416}
}

@article{Liu2021,
	author = {J.-P. Liu and H. Ø. Kolden  and H. K. Krovi  and N. F. Loureiro  and K. Trivisa  and A. M. Childs },
	title = {Efficient quantum algorithm for dissipative nonlinear differential equations},
	journal = {Proceedings of the National Academy of Sciences},
	volume = {118},
	number = {35},
	pages = {e2026805118},
	year = {2021},
	doi = {10.1073/pnas.2026805118}
}

@article{Lloyd1996,
	author = {S. Lloyd},
	title = {Universal Quantum Simulators},
	journal = {Science},
	volume = {273},
	number = {5278},
	pages = {1073--1078},
	year = {1996},
	doi = {10.1126/science.273.5278.1073}
}

@book{Manin1980,
	place = {Moskau},
	title = {Computable and Noncomputable (original title in Russian: \foreignlanguage{russian}{Вычислимое и невычислимое})},
	publisher = {Sovetskoe Radio},
	author = {Yu. I. Manin},
	year = {1980}
}

@article{Okounkov2010,
    author={Okounkov, A. and Pandharipande, R.},
    title={The quantum differential equation of the {H}ilbert scheme of points in the plane}, journal={Transformation Groups},
    volume={15},
    pages={965--982},
    year={2010},
    doi={10.1007/s00031-010-9116-3}
}

@article{Schlawin2022,
	author = {Schlawin, F. and Kennes, D. M. and Sentef, M. A.},
	title = {Cavity quantum materials},
	journal = {Applied Physics Reviews},
	volume = {9},
	number = {1},
	pages = {011312},
	year = {2022},
	doi = {10.1063/5.0083825}
}

@article{RMBSH09,
    author  = {C.~W. Rowley and I. Mezi{\'c} and S. Bagheri and P. Schlatter and D.~S. Henningson},
    title   = {Spectral analysis of nonlinear flows},
    journal = {Journal of Fluid Mechanics},
    volume  = {641},
    pages   = {115--127},
    year    = {2009},
}

@article{Shukla2023,
	title = {A hybrid classical-quantum algorithm for solution of nonlinear ordinary differential equations},
	journal = {Applied Mathematics and Computation},
	volume = {442},
	pages = {127708},
	year = {2023},
	doi = {10.1016/j.amc.2022.127708},
	author = {A. Shukla and P. Vedula}
}

@article{Stamatopoulos2022,
	doi = {10.22331/q-2022-07-20-770},
	title = {Towards quantum advantage in financial market risk using quantum gradient algorithms},
	author = {Stamatopoulos, N. and Mazzola, G. and Woerner, S. and Zeng, W. J.},
	journal = {{Quantum}},
	publisher = {{Verein zur F{\"{o}}rderung des Open Access Publizierens in den Quantenwissenschaften}},
	volume = {6},
	pages = {770},
	year = {2022}
}

@article{Xue2021,
	doi = {10.1088/1367-2630/ac3eff},
	year = {2021},
	publisher = {IOP Publishing},
	volume = {23},
	number = {12},
	pages = {123035},
	author = {C. Xue and Y.-C. Wu and G.-P. Guo},
	title = {Quantum homotopy perturbation method for nonlinear dissipative ordinary differential equations},
	journal = {New Journal of Physics}
}

@article{Zanger2021,
	doi = {10.22331/q-2021-07-13-502},
	title = {Quantum algorithms for solving ordinary differential equations via classical integration methods},
	author = {Zanger, B. and Mendl, C. B. and Schulz, M. and Schreiber, M.},
	journal = {{Quantum}},
	publisher = {{Verein zur F{\"{o}}rderung des Open Access Publizierens in den Quantenwissenschaften}},
	volume = {5},
	pages = {502},
	year = {2021}
}

@article{Zhang2020,
	author = {Zhang, Y. and Ni, Q.},
	title = {Recent advances in quantum machine learning},
	journal = {Quantum Engineering},
	volume = {2},
	number = {1},
	pages = {e34},
	doi = {https://doi.org/10.1002/que2.34},
	year = {2020}
}

@book{bib:Ciarlet,
  title={Linear and Nonlinear Functional Analysis with Applications},
  author={Ciarlet, P. G.},
  isbn={9781611972580},
  lccn={2013018736},
  series={Other Titles in Applied Mathematics},
  year={2013},
  publisher={Society for Industrial and Applied Mathematics}
}

@article{bib:MeyersSerrin,
	author = {Meyers, N. G. and Serrin, J.},
	title = {H = W},
	volume = {51},
	number = {6},
	pages = {1055--1056},
	year = {1964},
	doi = {10.1073/pnas.51.6.1055},
	publisher = {National Academy of Sciences},
	issn = {0027-8424},
	journal = {Proceedings of the National Academy of Sciences}
}

@article{bib:Bardos_Transport,
  title={{Probl{\`e}mes aux limites pour les {\'e}quations aux d{\'e}riv{\'e}es partielles du premier ordre {\`a} coefficients r{\'e}els; th{\'e}or{\`e}mes d'approximation; application {\`a} l'{\'e}quation de transport}},
  author={Bardos, C.},
  booktitle={Annales scientifiques de l'{\'E}cole normale sup{\'e}rieure},
  volume={3},
  number={2},
  pages={185--233},
  year={1970}
}

@book{bib:GiraultRaviart_FEMNavierStokes,
  title={{Finite Element Methods for Navier-Stokes Equations: Theory and Algorithms}},
  author={Girault, V. and Raviart, P. A.},
  isbn={9783540157960},
  lccn={85030387},
  series={Computational Mathematics Series},
  year={1986},
  publisher={Springer-Verlag}
}

@book{bib:EvansGariepy,
  title={{Measure Theory and Fine Properties of Functions, Revised Edition}},
  author={Evans, L. C. and Gariepy, R. F.},
  isbn={9781482242386},
  lccn={2015297289},
  series={Textbooks in Mathematics},
  year={2015},
  publisher={Taylor \& Francis}
}

@book{bib:AltNuernberg,
  title={{Linear Functional Analysis: An Application-Oriented Introduction}},
  author={Alt, H. W.},
  translator = {N{\"u}rnberg, R.},
  isbn={9781447172802},
  series={Universitext},
  year={2016},
  publisher={Springer London}
}

@book{bib:Pazy_Semigroup,
  title={{Semigroups of Linear Operators and Applications to Partial Differential Equations}},
  author={Pazy, A.},
  isbn={9780387908458},
  lccn={lc83010637},
  series={Applied Mathematical Sciences},
  year={1992},
  publisher={Springer New York}
}

@book{bib:LorenziRhandi_Semigroup,
  title={{Semigroups of Bounded Operators and Second-Order Elliptic and Parabolic Partial Differential Equations}},
  author={Lorenzi, L. and Rhandi, A.},
  isbn={9780429557668},
  lccn={2020039020},
  series={Chapman \& Hall/CRC Monographs and Research Notes in Mathematics},
  year={2021},
  publisher={CRC Press}
}

@article{GIANNAKIS2019,
	title = {Data-driven spectral decomposition and forecasting of ergodic dynamical systems},
	journal = {Applied and Computational Harmonic Analysis},
	volume = {47},
	number = {2},
	pages = {338-396},
	year = {2019},
	issn = {1063--5203},
	doi = {10.1016/j.acha.2017.09.001},
	author = {D. Giannakis}
}

@article{AM2017,
	author = {Arbabi, H. and Mezi\'{c}, I.},
	title = {Ergodic Theory, Dynamic Mode Decomposition, and Computation of Spectral Properties of the {K}oopman Operator},
	journal = {SIAM Journal on Applied Dynamical Systems},
	volume = {16},
	number = {4},
	pages = {2096--2126},
	year = {2017},
	doi = {10.1137/17M1125236}
}

@book{EFHN2001,
	title = {Operator Theoretic Aspects of Ergodic Theory},
	publisher = {Springer},
    address = {Cham},
	author = {T. Eisner and B. Farkas and M. Haase and R. Nagel},
	doi = {10.1007/978-3-319-16898-2},
	year = {2001}
}

@book{Fiedler2001,
    title = {Ergodic Theory, Analysis, and Efficient Simulation of Dynamical Systems},
    publisher = {Springer Berlin, Heidelberg},
    editor = {B. Fiedler},
    doi = {10.1007/978-3-642-56589-2},
    year = {2001}
}

@article{KNP22,
	doi = {10.1088/1751-8121/ac7d22},
	year = {2022},
	publisher = {IOP Publishing},
	volume = {55},
	number = {31},
	pages = {314002},
	author = {S. Klus and F. Nüske and S. Peitz},
	title = {Koopman analysis of quantum systems},
	journal = {Journal of Physics A: Mathematical and Theoretical}
}

@article{KNKWKSN18,
    author =        {S. Klus and F. N\"uske and P. Koltai and H. Wu and I. Kevrekidis and C. Sch\"utte and F. No\'e},
    journal =       {Journal of Nonlinear Science},
    pages =         {985--1010},
    title =         {Data-driven model reduction and transfer operator approximation},
    volume =        {28},
    year =          {2018},
    doi =           {10.1007/s00332-017-9437-7},
}

@article{KKS16,
    author  = {S. Klus and P. Koltai and C. Sch{\"u}tte},
    journal = {Journal of Computational Dynamics},
    number  = {1},
    pages   = {51--79},
    title   = {On the numerical approximation of the {P}erron--{F}robenius and {K}oopman operator},
    volume  = {3},
    year    = {2016},
    doi     = {10.3934/jcd.2016003},
}

@book{LaMa94,
    author    = {A. Lasota and M.~C. Mackey},
    title     = {Chaos, fractals, and noise: Stochastic aspects of dynamics},
    edition   = {2nd},
    publisher = {Springer},
    address   = {New York},
    series    = {Applied Mathematical Sciences},
    volume    = {97},
    year      = {1994},
}

@article{Mezic05,
    author  = {I. Mezi{\'{c}}},
    title   = {Spectral Properties of Dynamical Systems, Model Reduction and Decompositions},
    journal = {Nonlinear Dynamics},
    volume  = {41},
    number  = {1},
    pages   = {309--325},
    year    = {2005},
    doi     = {10.1007/s11071-005-2824-x},
}

@article{KM18a,
    author = {M. Korda and I. Mezi{\'{c}}},
    journal = {Automatica},
    pages = {149--160},
    title = {Linear predictors for nonlinear dynamical systems: {K}oopman operator meets model predictive control},
    volume = {93},
    year = {2018},
    doi = {10.1016/j.automatica.2018.03.046},
}

@article{PK19,
    author = {S. Peitz and S. Klus},
    journal = {Automatica},
    pages = {184--191},
    title = {Koopman operator-based model reduction for switched-system control of {PDE}s},
    volume = {106},
    year = {2019},
    doi = {10.1016/j.automatica.2019.05.016},
}

@article{Ko31,
    IDS = {bib:Koopman},
    author    = {B. Koopman},
    title     = {Hamiltonian systems and transformations in {H}ilbert space},
    publisher = {National Academy of Sciences},
    journal   = {Proceedings of the National Academy of Sciences},
    volume    = {17},
    number    = {5},
    pages     = {315},
    year      = {1931},
    doi       = {10.1073/pnas.17.5.315},
}

@article{KoMe18,
    author  = {M. Korda and I. Mezi{\'{c}}},
    title   = {{On Convergence of Extended Dynamic Mode Decomposition to the Koopman Operator}},
    journal = {Journal of Nonlinear Science},
    volume  = {28},
    number  = {2},
    pages   = {687--710},
    year    = {2018},
    doi     = {10.1007/s00332-017-9423-0},
}

@article{Pirandola2020,
	author = {S. Pirandola and Andersen, {U. L.} and L. Banchi and M. Berta and D. Bunandar and R. Colbeck and D. Englund and T. Gehring and C. Lupo and C. Ottaviani and J. Pereira and M. Razavi and Shaari, {J. S.} and M. Tomamichel and Usenko, {V. C.} and G. Vallone and P. Villoresi and P. Wallden},
	journal = {Advances in Optics and Photonics},
	number = {4},
	pages = {1012--1236},
	publisher = {OSA},
	title = {Advances in quantum cryptography},
	volume = {12},
	year = {2020},
	doi = {10.1364/AOP.361502}
}

@article{Portmann2022,
	title = {Security in quantum cryptography},
	author = {Portmann, C. and Renner, R.},
	journal = {Rev. Mod. Phys.},
	volume = {94},
	issue = {2},
	pages = {025008},
	year = {2022},
	publisher = {American Physical Society},
	doi = {10.1103/RevModPhys.94.025008}
}

@article{NKPMN14,
    author  = {F. N\"uske and B. G. Keller and G. P\'erez-Hern\'andez and A. S. J. S. Mey and F. No\'e},
    journal = {Journal of Chemical Theory and Computation},
    number  = {4},
    pages   = {1739--1752},
    title   = {Variational Approach to Molecular Kinetics},
    volume  = {10},
    year    = {2014},
    doi     = {10.1021/ct4009156},
}

@article{KNPNCS20,
    author  = {S. Klus and F. N\"uske and S. Peitz and J.-H. Niemann and C. Clementi and C. Sch\"utte},
    title   = {Data-driven approximation of the {K}oopman generator: {M}odel reduction, system identification, and control},
    journal = {Physica D: Nonlinear Phenomena},
    volume  = {406},
    pages   = {132416},
    year    = {2020},
    doi     = {10.1016/j.physd.2020.132416}
}

@misc{Sch99,
    author = {C. Sch{\"u}tte},
    title  = {Conformational Dynamics: Modelling, Theory, Algorithm, and Application to Biomolecules},
    school = {FU Berlin},
    year   = {1999},
    note   = {Habilitation Thesis},
}

@article{SKH23,
    author    = {C. Schütte and S. Klus and C. Hartmann},
    title     = {Overcoming the timescale barrier in molecular dynamics: {T}ransfer operators, variational principles and machine learning},
    journal   = {Acta Numerica},
    publisher = {Cambridge University Press},
    volume    = {32},
    pages     = {517–673},
    year      = {2023},
    doi       = {10.1017/S0962492923000016},
}

@article{POR20,
    author  = {S. Peitz and S. E. Otto and C. W. Rowley},
    title   = {Data-Driven Model Predictive Control using Interpolated {K}oopman Generators},
    journal = {SIAM Journal on Applied Dynamical Systems},
    number  = {3},
    pages   = {2162--2193},
    volume  = {19},
    year    = {2020},
    doi     = {10.1137/20M1325678},
}

@misc{LLASS22,
    author = {Y. T. Lin and R. B. Lowrie and D. Aslangil and Y. Subaşı and A. T. Sornborger},
    title = {Koopman von {N}eumann mechanics and the {K}oopman representation: A perspective on solving nonlinear dynamical systems with quantum computers},
    publisher = {arXiv},
    year = {2022},
    doi = {10.48550/ARXIV.2202.02188},
}

@article{KKB21,
	author    = {E. Kaiser and J. N. Kutz and S. L. Brunton},
	title     = {Data-driven discovery of Koopman eigenfunctions for control},
	journal   = {Machine Learning: Science and Technology},
	publisher = {IOP Publishing},
	volume    = {2},
	number    = {3},
	pages     = {035023},
	year      = {2021},
	doi       = {10.1088/2632-2153/abf0f5},
}

%
%
\appendix%
\counterwithin{thm}{section}%
\section{Appendix --- Proof of Theorem \ref{thm:pfs_smooth_density}}\label{apx:density}

This appendix is devoted to the proofs of the density statements regarding smooth functions presented in \cref{thm:pfs_smooth_density}.
First, we collect some additional tools from functional analysis.
We refer to \cite[Sections 4.13--4.15, 4.20, 4.21, 4.23, 4.24]{bib:AltNuernberg} and \cite[Section 4.2]{bib:EvansGariepy}.

%
%
Let $u \in L^1(\bR^d)$ and $G \colon \bR^d \times \bR^d \to \bR$ be Lebesgue measurable.
The \emph{convolution} {${G * u} \colon \bR^d \to \bR$} is defined by
\begin{eq*}
	(G * u)(x) := \int_{\bR^d} G(x,y) u(x - y) \,\mathrm{d}y.
\end{eq*}
We have the estimate
\begin{eq}\label{eq:convolution_estimate}
	\|G * u\|_{L^p(\bR^d)} \leq \|u\|_{L^1(\bR^d)} \sup_{h \in \supp(u)} \|G(\cdot + h, \cdot)\|_{L^p(\bR^d)},
\end{eq}
if the supremum on the right-hand side exists and is finite.
Let $g \colon \bR^d \to \bR$ only depend on one variable, then the convolution reads
\begin{eq*}
	(g * u)(x) = \int_{\bR^d} g(y) u(x - y) \,\mathrm{d}y.
\end{eq*}
By a change of variables, one can show that $g * u = u * g$.
In this case, for $g \in L^p(\bR^d)$ and $p \in [1,\infty)$, we get the estimate
\begin{eq}\label{eq:convolution_estimate_simple}
	\|u * g\|_{L^p(\bR^d)} \leq \|u\|_{L^1(\bR^d)} \|g\|_{L^p(\bR^d)}.
\end{eq}

%
%
Let $\rho \in C^\infty(\bR^d)$ with $\rho \geq 0$ on $\bR^d$ and $\supp \rho \subseteq \bB$ be given, where $\bB$ is the closed unit ball in $\bR^d$.
Further, we assume $\int_{\bR^d} \rho \, \dx = 1$.
Consider the \emph{standard Dirac sequence} associated with $\rho$, which is the sequence $(\rho_\eps)_{\eps > 0}$ of functions $\rho_\eps \colon \bR^d \to \bR$ defined by $\rho_\eps(x) := \eps^{-d} \rho\left(\frac{x}{\eps}\right)$.
The following properties are satisfied:
\begin{enum}
	\item $\rho_\eps \geq 0$ on $\bR^d$,
	\item $\supp \rho_\eps \subseteq \eps \bB$, and
	\item $\int_{\bR^d} \rho_\eps \, \dx = 1$.
\end{enum}
From now on, let $(\rho_\eps)_{\eps > 0}$ be a standard Dirac sequence.
%
%
One can show that the convolution of a function $u \in L^p(\Omega)$ for $p \in [1,\infty)$ with $\rho_\eps$ is again smooth, see \cite[Section 4.13 (4)]{bib:AltNuernberg}.
In this case, $\rho_\eps$ is also called \emph{mollifier} and the convolution process is called \emph{mollification} and we obtain the following property for the derivatives:
\begin{eq}\label{eq:mollifier:derivative}
	\partial_j(u * \rho) = u * \partial_j \rho.
\end{eq}
If $u$ has in fact more regularity, one can interchange the roles in \cref{eq:mollifier:derivative}.

%
%
Let $\Omega \subseteq \bR^d$ be an open, bounded domain.
A \emph{locally finite open covering} of $\Omega$ is a family of nonempty, open sets $(U_k)_{k \in \bN}$ in $\bR^d$ such that $\Omega \subseteq \bigcup_{k \in \bN} U_k$ and $U_k \cap \Omega \neq \emptyset$ and for all $x \in \bigcup_{k \in \bN} U_k$ exists $\eps > 0$ such that $\{k \in \bN : U_k \cap (x + \eps \bB)\}$ is finite.
%
%
A frequently used example (cf. \cite[Section 4.21, (3)]{bib:AltNuernberg}) is defined by
\begin{eq}\label{eq:open_covering_example}
	U_k = \left\{x \in \Omega : \diam{\Omega}\frac{1}{2^{k + 1}} < \dist(x, \partial \Omega) < \diam{\Omega}\frac{1}{2^{k - 1}}\right\},
\end{eq}
where $\diam{\Omega} = \sup\{ \|x - x'\| \colon  x, x' \in \Omega\}$ is the diameter (of $\Omega$) and $\dist(\cdot, \partial\Omega) \colon \bR^d \to \bR$ denotes the distance function (with respect to $\partial \Omega$).

%
%
A \emph{partition of unity} associated with a locally finite open covering is a family of smooth functions $(\eta_k)_{k \in \bN}$ with $\eta_k \in C_0^\infty(U_k)$, $\eta_k \geq 0$ and
\begin{eq*}
	\sum_{k \in \bN} \eta_k = 1 \text{~on~} \Omega.
\end{eq*}
In this sum, the functions $(\eta_k)_{k \in \bN}$ are smoothly extended by zero and for every argument only finitely many of them have nonzero values.
With these additional tools, we are ready to prove \cref{thm:pfs_smooth_density}.

%
%
For a function depending on two variables $x, y$, we denote by $\nabla_x, \ddiv_x$ and $\nabla_y, \ddiv_y$ the gradients and divergences with respect to $x$ and $y$, respectively.
We start by deriving the following auxiliary result.
%
%
\begin{lem}\label{lem:comparison}
	Let $\Omega \subseteq \bR^d$ be a bounded, open domain with Lipschitz boundary.
	Consider for fixed $\delta > 0$ a set $D_\delta := \{x \in \Omega: \dist(x,\partial\Omega) > \delta\}$.
	For $\eps < \delta$ and $u \in L^2(\Omega)$ define $u_\eps := \rho_\eps * u$ and $(uF)_\eps = \rho_\eps * (u F)$.
	Then, we obtain
	\begin{eq*}
		\|\ddiv(u_\eps F - (u F)_\eps )\|_{L^2(D_\delta)} \to 0 \text{~as~} \eps \to 0.
	\end{eq*}
\end{lem}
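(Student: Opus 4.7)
The plan is to decompose $\ddiv(u_\eps F - (uF)_\eps)$ into a Friedrichs-type commutator and a routine mollification remainder. Since $u_\eps \in C^\infty(\bR^d)$ and $F \in C^1(\bar\Omega)$, the product rule yields $\ddiv(u_\eps F) = F \cdot \nabla u_\eps + (\ddiv F) u_\eps$ on a neighbourhood of $\overline{D_\delta}$. Adding and subtracting $((\ddiv F) u)_\eps$, I would write
$$\ddiv\bigl(u_\eps F - (uF)_\eps\bigr) = R_\eps + \bigl[(\ddiv F) u_\eps - ((\ddiv F) u)_\eps\bigr],$$
with commutator $R_\eps := F \cdot \nabla u_\eps - \ddiv((uF)_\eps) + ((\ddiv F) u)_\eps$. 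The bracketed term is routine: since $\ddiv F \in L^\infty(\Omega)$ and $u \in L^2(\Omega)$, both $(\ddiv F) u_\eps$ and $((\ddiv F) u)_\eps$ converge to $(\ddiv F) u$ in $L^2(D_\delta)$, so their difference tends to $0$. Using $\nabla_x \rho_\eps(x - y) = -\nabla_y \rho_\eps(x - y)$, the commutator simplifies to
$$R_\eps(x) = -\int \nabla_y \rho_\eps(x - y) \cdot [F(x) - F(y)]\, u(y) \, \mathrm{d}y + \bigl((\ddiv F) u\bigr)_\eps(x).$$

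I would then show $R_\eps \to 0$ in $L^2(D_\delta)$ via the classical two-step Friedrichs argument. For step (i), since $F$ is globally Lipschitz on $\bar\Omega$ with some constant $L$, the commutator kernel obeys $|F(x) - F(y)|\,|\nabla_y \rho_\eps(x - y)| \le L\,\eps^{-d} K((x - y)/\eps)$ with $K(z) := |z|\,|\nabla \rho(z)| \in L^1(\bR^d)$. Setting $\tilde K_\eps(z) := \eps^{-d} K(z/\eps)$ (whose $L^1$-norm equals $\|K\|_{L^1(\bR^d)}$ independent of $\eps$), Young's inequality \cref{eq:convolution_estimate_simple} gives a uniform bound $\|R_\eps[u]\|_{L^2(D_\delta)} \le C\,\|u\|_{L^2(\Omega)}$ with $C$ independent of $\eps \in (0,\delta)$. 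For step (ii), I would verify convergence on the dense subspace $C^\infty(\bar\Omega) \subseteq L^2(\Omega)$: for smooth $u$ and $x \in D_\delta$ with $\eps < \delta$, the support of $\rho_\eps(x - \cdot)$ lies inside $\Omega$, so integration by parts in $y$ is legitimate. Using $\nabla_y \cdot \{[F(x) - F(y)] u(y)\} = -\ddiv F(y)\, u(y) + [F(x) - F(y)] \cdot \nabla u(y)$, the $\ddiv F$ contribution cancels against $((\ddiv F) u)_\eps$, leaving
$$R_\eps(x) = \int \rho_\eps(x - y)\, [F(x) - F(y)] \cdot \nabla u(y) \, \mathrm{d}y,$$
bounded pointwise by $L\eps\,(\rho_\eps * |\nabla u|)(x)$, hence tending to $0$ in $L^2$ at rate $\mathcal{O}(\eps)$.

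Pairing (i) and (ii) via the standard density argument concludes the proof: given $u \in L^2(\Omega)$ and $\eta > 0$, approximate $u$ by $u_n \in C^\infty(\bar\Omega)$ with $\|u - u_n\|_{L^2(\Omega)} < \eta/(2C)$, then invoke (ii) to make $\|R_\eps[u_n]\|_{L^2(D_\delta)} < \eta/2$ for $\eps$ sufficiently small. The principal obstacle is precisely the low regularity of $u$: if $u$ lay in $H^1$, step (ii) would settle the matter at once; for $u \in L^2$ alone, one must resort to Friedrichs' classical trick of pairing a uniform operator-norm bound with convergence on a dense subspace of smooth functions.
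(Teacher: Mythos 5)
Your proof is correct and takes essentially the same route as the paper's: a Friedrichs-type commutator argument combining a uniform $L^2\to L^2$ bound (Lipschitz continuity of $F$ played against the $\eps^{-1}$ scaling of $\nabla\rho_\eps$, via Young's inequality), exact convergence for smooth $u$, and a density argument. The only cosmetic difference is that you split off the $(\ddiv F)u_\eps - ((\ddiv F)u)_\eps$ remainder at the outset and dispose of it for general $u \in L^2(\Omega)$ by standard mollifier convergence, whereas the paper keeps these terms inside the estimated quantity and instead invokes the uniform continuity of $\ddiv F$ in its smooth-function step.
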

\begin{proof}
	The proof is split into several steps.
	%
	%
	\paragraph{Step 1.} There exists a constant $C > 0$ that depends only on $F$ and $\rho$ such that \linebreak \mbox{$\|\ddiv(u_\eps F - (u F)_\eps )\|_{L^2(D_\delta)} \leq C \|u\|_{L^2(D_\delta)}$}.\\

	\noindent For $x \in D_\delta$, we get
	\begin{eq*}
		\ddiv_x(u_\eps F &- (u F)_\eps)(x)
		= \ddiv_x\left( \int_\Omega \rho_\eps(x - y) u(y) F(x) \, \mathrm{d}y \right) - \ddiv_x\left( \int_\Omega \rho_\eps(x - y) u(y) F(y) \, \mathrm{d}y \right)\\
		&= \int_\Omega \nabla_x \rho_\eps(x - y)\cdot u(y) F(x) \, \mathrm{d}y + \int_\Omega \rho_\eps(x - y) u(y) \ddiv F(x) \, \mathrm{d}y\\
		&\phantom{=}~ - \int_\Omega \nabla_x \rho_\eps(x - y) \cdot u(y) F(y) \, \mathrm{d}y\\
		&= \sum_{j = 1}^d \int_\Omega \partial_j \rho_\eps(x - y) u(y)(F_j(x) - F_j(y)) \, \mathrm{d}y + \int_\Omega \rho_\eps(x - y) u(y) \ddiv F(x) \, \mathrm{d}y\\
		&= \sum_{j = 1}^d (\partial_j \rho_\eps * G_j) (x) + \ddiv F(x) (\rho_\eps * u)(x),
	\end{eq*}
	where $G_j(x,y) := u(y)(F_j(x) - F_j(y))$.
	By extending $F$ to a Lipschitz continuous function on $\bR^d$ by Kirszbraun's theorem and the other $L^2$-functions by zero outside $\Omega$, we get the following estimates by using \cref{eq:convolution_estimate_simple}.
	First, we obtain
	\begin{eq*}
		\| (\rho_\eps * u) \ddiv F\|_{L^2(D_\delta)}
		&\leq \|\ddiv F\|_{L^\infty(D_\delta)} \|\rho_\eps * u\|_{L^2(D_\delta)}
		\leq \|\ddiv F\|_{L^\infty(\Omega)} \|\rho_\eps * u\|_{L^2(\bR^d)}\\
		&\leq \|\ddiv F\|_{L^\infty(\Omega)} \|\rho_\eps\|_{L^1(\bR^d)} \|u\|_{L^2(\bR^d)}
		= \|\ddiv F\|_{L^\infty(\Omega)} \|u\|_{L^2(\Omega)}.
	\end{eq*}
	Further, we get for all $j = 1, \dots, d$ by \cref{eq:convolution_estimate} that
	\begin{eq*}
		\|\partial_j \rho_\eps * G_j\|_{L^2(D_\delta)}
		\leq \|\partial_j \rho_\eps * G_j\|_{L^2(\bR^d)}
		\leq \|\partial _j \rho_\eps\|_{L^1(\bR^d)} \sup_{h \in \supp(\rho_\eps)} \|G_j(\cdot + h, \cdot)\|_{L^2(\bR^d)}.
	\end{eq*}
	Regarding the first factor, we derive
	\begin{eq*}
		\int_{\bR^d} |\partial_j \rho_\eps(x)| \dx
		= \int_{\bR^d} \frac{1}{\eps^{d + 1}} \left|\partial_j \rho\left(\frac{x}{\eps}\right)\right| \dx
		= \frac{1}{\eps} \int_{\bR^d} |\partial_j \rho(x)| \dx
		= \frac{1}{\eps} \|\partial_j \rho\|_{L^1(\bR^d)}.
	\end{eq*}
	For the second part, we get $\supp(\rho_\eps) = \eps \bB$ and for $h \in \eps \bB$ we estimate
	\begin{eq*}
		\|G_j(\cdot + h, \cdot)\|_{L^2(\bR^d)}^2
		&= \int_{\bR^d} |G_j(x + h, x)|^2 \dx
		= \int_{\bR^d} u(x)^2 (F_j(x + h) - F_j(x))^2 \dx\\
		&\leq \Lip(F_j)^2 |h|^2 \int_\Omega u(x)^2 \dx
		= \Lip(F_j)^2 \eps^2 \|u\|_{L^2(\Omega)}^2.
	\end{eq*}
	This yields
	\begin{eq*}
		\|\partial_j \rho_\eps * G_j\|_{L^2(D_\delta)}
		\leq \Lip(F) \|\partial_j \rho\|_{L^1(\bR^d)} \|u\|_{L^2(\Omega)}.
	\end{eq*}
	In combination, we obtain
	\begin{eq*}
		\|\ddiv(u_\eps F - (uF)_\eps)\|_{L^2(D_\delta)}
		\leq \left( \| \ddiv F \|_{L^\infty(\Omega)} + \sum_{j = 1}^d \Lip(F) \|\partial_j \rho\|_{L^1(\bR^d)} \right) \|u\|_{L^2(\Omega)}.
	\end{eq*}
	This proves the assertion of Step 1.
	%
	%
	\paragraph{Step 2.} If $u \in C^1(\bar\Omega)$, then $\|\ddiv(u_\eps F - (u F)_\eps)\|_{L^2(D_\delta)} \to 0$.\\

	\noindent By \cref{eq:mollifier:derivative}, we get
	\begin{eq*}
		&\ddiv(u_\eps F - (u F)_\eps)(x)
		= \nabla u_\eps(x) \cdot F(x) + u_\eps(x) \ddiv F(x) - (\rho_\eps * \ddiv(u F))(x)\\
		&= \int_\Omega \rho_\eps(x - y) \nabla_y u(y) \cdot F(x) \, \mathrm{d}y + \int_\Omega \rho_\eps(x - y) u(y) \ddiv F(x) \, \mathrm{d}y - \int_\Omega \rho_\eps(x - y) \ddiv(u F)(y) \, \mathrm{d}y\\
		&= \int_\Omega \rho_\eps(x - y) \nabla_y u(y) \cdot (F(x) - F(y)) \, \mathrm{d}y + \int_\Omega \rho_\eps(x - y)u(y) (\ddiv F(x) - \ddiv F(y)) \, \mathrm{d}y\\
		&= \rho_\eps * G'(x) + \rho * G''(x),
	\end{eq*}
	where $G'(x,y) := \nabla u(y) \cdot (F(x) - F(y))$ and $G''(x,y) := u(y)(\ddiv F(x) - \ddiv F(y))$.
	Then, we obtain
	\begin{eq*}
		\|\rho_\eps * G'\|_{L^2(D_\delta)}
		\leq \|\rho_\eps\|_{L^1(\bR^d)} \sup_{h \in \supp(\rho_\eps)}\|G'(\cdot + h, \cdot)\|_{L^2(\bR^d)}.
	\end{eq*}
	Further, with $\supp(\rho_\eps) = \eps \bB$ and $h \in \eps \bB$, we get
	\begin{eq*}
		\|G'(\cdot + h, \cdot)\|_{L^2(\bR^d)}^2
		&= \int_{\bR^d} G'(x + h,x)^2 \dx
		\leq \int_{\bR^d} (\nabla u(x) \cdot (F(x + h) - F(x))^2 \dx\\
		&\leq \Lip(F)^2 \eps^2 \int_{\bR^d} |\nabla u(x)|^2 \dx.
	\end{eq*}
	Similarly, as $\ddiv F$ is uniformly continuous on $\bar\Omega$, we find for every $\eps' > 0$ an $\eps_0 > 0$ such that
	\begin{eq*}
		|\ddiv F(x) - \ddiv F(y)| \leq \eps' \text{~for all~} x,y \in \bar\Omega \text{~with~} |x - y| < \eps_0.
	\end{eq*}
	Then, we get for $\eps \leq \eps_0$ that $|h| < \eps_0$ and thus
	\begin{eq*}
		\|\rho_\eps * G''\|_{L^2(D_\delta)}
		&\leq \|\rho_\eps\|_{L^1(\bR^d)} \sup_{h \in \supp(\rho_\eps)} \|G(\cdot + h, \cdot)\|_{L^2(\bR^d)}\\
		&\leq \sup_{h \in \eps \bB} \left( \int_{\bR^d} u(x)^2 (\ddiv F(x + h) - \ddiv F(x))^2 \dx \right)^\frac{1}{2}
		\leq \eps' \|u\|_{L^2(\Omega)}.
	\end{eq*}
	Hence, for arbitrary $\eps' > 0$ we take $\eps := \min(\eps_0,\frac{1}{1 + \Lip(F)} \eps')$.
	Then, we get
	\begin{eq*}
		\|\ddiv(u_\eps F) - \ddiv((uF)_\eps)\|_{L^2(D_\delta)}
		&\leq \|\rho_\eps * G'\|_{L^2(D_\delta)} + \|\rho_\eps * G''\|_{L^2(D_\delta)}\\
		&\leq \Lip(F)\eps \|\nabla u\|_{L^2(\Omega;\bR^d)} + \eps' \|u\|_{L^2(\Omega)}\\
		&\leq \eps' \|u\|_{H^1(\Omega)}.
	\end{eq*}
	This shows that $\|\ddiv(u_\eps F - (u F)_\eps)\|_{L^2(D_\delta)} \to 0$ as $\eps \searrow 0$.
	%
	%
	\paragraph{Step 3.} $\| \ddiv(u_\eps F - (u F)_\eps) \|_{L^2(D_\delta)} \to 0$ as $\eps \to 0$ for all $u \in L^2(\Omega)$.\\

	\noindent As a first step, we use the density $C^\infty(\Omega) \subseteq L^2(\Omega)$, see \cite{bib:AltNuernberg}.
	Hence, for every $u \in L^2(\Omega)$ and every $\alpha > 0$, there exists a function $u^{(\alpha)} \in C^1(\bar \Omega)$ with $\|u - u^{(\alpha)}\|_{L^2(\Omega)} \leq \alpha$.
	Using the results from Step 1 and Step 2 we estimate that
	\begin{eq*}
		\|\ddiv\left(u_\eps F - (u F)_\eps\right)\|_{L^2(D_\delta)}
		&\leq \left\|\ddiv\left(\left(u - u^{(\alpha)}\right)_\eps F - \left(\left(u - u^{(\alpha)}\right)F\right)_\eps\right)\right\|_{L^2(D_\delta)}\\
		&\phantom{\leq}~ + \|\ddiv((u^{(\alpha)})_\eps F - (u^{(\alpha)} F)_\eps)\|_{L^2(D_\delta)}\\
		&\leq C \|u - u^{(\alpha)}\|_{L^2(\Omega)} + \|\ddiv((u^{(\alpha)})_\eps F - (u^{(\alpha)} F)_\eps)\|_{L^2(D_\delta)}.
	\end{eq*}
	Hence, if we let $\eps \searrow 0$, we get by Step 2 that
	\begin{eq*}
		\limsup_{\eps \to 0} \|\ddiv(u_\eps F - (u F)_\eps)\|_{L^2(D_\delta)} \leq C \|u - u^{(\alpha)}\|_{L^2(\Omega)} \leq C \alpha.
	\end{eq*}
	As $\alpha > 0$ was chosen arbitrarily, we obtain the assertion.
\end{proof}

\cref{lem:comparison} shows that the difference of divergences between the product of the mollification with the vector field and the mollification of the product vanishes asymptotically.
Next, the approximation on the subdomain $D_\delta$ is discussed.
%
%
\begin{lem}\label{lem:aux_apx}
	Let $\psi \in \pfs$.
	Consider for fixed $\delta > 0$ a set $D_\delta \subseteq \Omega$ with $\dist(D_\delta,\partial \Omega) =: \delta$.
	For $\eps < \delta$, let $\psi_\eps = \rho_\eps * \psi$ be the convolution with $\rho_\eps$.
	Then, we obtain
	\begin{eq*}
		\|\psi - \psi_\eps\|_{H(\cL^*,D_\delta)} \to 0 \text{~as~} \eps \to 0.
	\end{eq*}
\end{lem}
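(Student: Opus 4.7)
The plan is to control the two parts of the $\pfs$-norm separately, namely the $L^2$-part and the divergence part. The $L^2$-part is classical: since $\psi \in L^2(\Omega)$ (extended by zero outside $\Omega$ if needed), standard mollifier theory gives $\|\psi - \psi_\eps\|_{L^2(D_\delta)} \leq \|\psi - \psi_\eps\|_{L^2(\bR^d)} \to 0$ as $\eps \searrow 0$.

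For the divergence part, the natural idea is to insert the auxiliary mollification of the product, $(\psi F)_\eps := \rho_\eps * (\psi F)$, and split via the triangle inequality:
\begin{eq*}
	\|\ddiv(\psi F) - \ddiv(\psi_\eps F)\|_{L^2(D_\delta)}
	\leq \|\ddiv(\psi F) - \ddiv((\psi F)_\eps)\|_{L^2(D_\delta)}
	+ \|\ddiv((\psi F)_\eps) - \ddiv(\psi_\eps F)\|_{L^2(D_\delta)}.
\end{eq*}
The second term tends to zero by \cref{lem:comparison} applied to $u = \psi \in L^2(\Omega)$. For the first term, I would use the standard fact that mollification commutes with weak divergence away from the boundary: since $\dist(D_\delta, \partial \Omega) = \delta > \eps$, the test-function characterization of the distributional divergence, combined with $\rho_\eps(x - \cdot) \in C_0^\infty(\Omega)$ for $x \in D_\delta$, yields
\begin{eq*}
	\ddiv((\psi F)_\eps)(x) = (\rho_\eps * \ddiv(\psi F))(x) \text{~for all~} x \in D_\delta.
\end{eq*}
Since $\psi \in \pfs$ gives $\ddiv(\psi F) \in L^2(\Omega)$, classical mollifier convergence yields $\rho_\eps * \ddiv(\psi F) \to \ddiv(\psi F)$ in $L^2(D_\delta)$ as $\eps \searrow 0$.

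Putting the two pieces together gives $\|\psi - \psi_\eps\|_{\pfs(D_\delta)} \to 0$. The main obstacle has already been handled by \cref{lem:comparison}, whose purpose is exactly to resolve the noncommutativity between multiplication by the (non-constant) vector field $F$ and the mollification: without it, one cannot simply pull $F$ inside the convolution. Everything else reduces to routine properties of Dirac sequences and the fact that on $D_\delta$ the mollifications are well-defined and their supports stay inside $\Omega$.
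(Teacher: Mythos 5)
Your proposal is correct and follows essentially the same route as the paper's proof: the same triangle-inequality split through $(\psi F)_\eps$, with \cref{lem:comparison} handling the noncommutativity of mollification with multiplication by $F$, and the commutation of mollification with the weak divergence on $D_\delta$ handling the remaining term. Your extra remark on why $\ddiv((\psi F)_\eps) = \rho_\eps * \ddiv(\psi F)$ holds on $D_\delta$ (testing against $\rho_\eps(x - \cdot) \in C_0^\infty(\Omega)$) only makes explicit what the paper cites via \cref{eq:mollifier:derivative}.
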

\begin{proof}
	The convergence in $L^2(D_\delta)$ follows from standard theory, see, e.g.,~\cite[4.15(2)]{bib:AltNuernberg}.
	Hence, we are left to show that
	\begin{eq*}
		\|\ddiv(\psi F) - \ddiv(\psi_\eps F)\|_{L^2(D_\delta)} \to 0.
	\end{eq*}
	We estimate
	\begin{eq*}
		\|\ddiv(\psi_\eps F) - \ddiv(\psi F)\|_{L^2(D_\delta)}
		&\leq \|\ddiv(\psi_\eps F) - \ddiv((\psi F)_\eps)\|_{L^2(D_\delta)}\\
		&\phantom{\leq}~+ \|\ddiv((\psi F)_\eps) - \ddiv(\psi F)\|_{L^2(D_\delta)}.
	\end{eq*}
	The first term converges to zero according to \cref{lem:comparison}.
	For the second term, we observe by \cref{eq:mollifier:derivative} that $\ddiv(\rho_\eps * (\psi F)) = \rho_\eps * (\ddiv(\psi F))$ and hence
	\begin{eq*}
		\|\ddiv(\psi F) - \ddiv((\psi F)_\eps)\|_{L^2(D_\delta)} \to 0 \text{~as~} \eps \to 0.
	\end{eq*}
	This proves the assertion.
\end{proof}

With these preparations at hand, we are finally ready to prove the density of $C^\infty(\Omega) \cap \pfs$ in $\pfs$.
%
%
\begin{proof}[Proof of \cref{thm:pfs_smooth_density}\,\cref{enum:pfs_smooth_density}]
	Consider the locally finite open covering $(U_k)_{k \in \bN}$ of $\Omega$ in \cref{eq:open_covering_example} with $\bar U_k \subseteq \Omega$ and an associated partition of unity $(\eta_k)_{k \in \bN}$ with $\eta_k \in C_0^\infty(U_k)$ and $\sum_{k \in \bN} \eta_k = 1$ on $\Omega$.
	Take an arbitrary $\eps > 0$.
	Using \cref{lem:aux_apx} we select for all $k \in \bN$ a function
	$\psi_{\eps,k} \in C^\infty(U_k)$ with
	\begin{eq*}
		\|\psi - \psi_{\eps,k}\|_{H(\cL^*, U_k)} \leq \frac{\eps \cdot 2^{-k}}{(\|\eta_k\|_{C^1(\bar \Omega)} + 1)(1 + \|F\|_{L^\infty(\Omega)}^2)^{\frac{1}{2}}}.
	\end{eq*}
	We show that $\eta_k \psi \in \pfs$.
	To see this, take $\varphi \in C_0^\infty(\Omega)$.
	We note that
	\begin{eq*}
		\nabla (\eta_k \varphi) = \eta_k \nabla \varphi + \varphi \nabla \eta_k
	\end{eq*}
	and obtain by partial integration
	\begin{eq}\label{eq:div_chainrule}
		\int_\Omega \eta_k \psi F \nabla \varphi \, \dx
		&= \int_\Omega \psi F (\nabla (\eta_k \varphi) - \varphi \nabla \eta_k) \, \dx\\
		&= -\int_\Omega \ddiv(\psi F) \eta_k \varphi \, dx - \int_\Omega \psi F \nabla \eta_k \varphi \, \dx.
	\end{eq}
	This implies that $\eta_k \psi F \in H(\ddiv, \Omega)$ with $\ddiv(\eta_k \psi F) = \eta_k \ddiv(\psi F) + \psi F \nabla \eta_k$.
	We define $\psi_\eps = \sum_{k \in \bN} \eta_k \psi_{\eps,k}$.
	As in every sufficiently small neighborhood of a point $x \in \Omega$ only finitely many $\eta_k$ are nonzero, we have from the smoothness of $\eta_k$ and $\psi_{\eps,k}$ that $\psi_\eps \in C^\infty(\Omega)$.
	From \cref{eq:div_chainrule}, we deduce
	\begin{eq*}
		\ddiv((\psi - \psi_\eps)F) = \sum_{k \in \bN}\eta_k \ddiv((\psi - \psi_{\eps,k})F) + \sum_{k \in \bN} \nabla \eta_k (\psi - \psi_{\eps,k})F.
	\end{eq*}
	Further, we derive
	\begin{eq*}
		\|\psi - \psi_\eps\|_{L^2(\Omega)}
		&= \left\| \sum_{k \in \bN} \eta_k (\psi - \psi_{\eps,k}) \right\|_{L^2(\Omega)}
		\leq \sum_{k \in \bN} \|\eta_k\|_{L^\infty(\Omega)} \|\psi - \psi_{\eps,k}\|_{L^2(U_k)}\\
		&\leq \sum_{k \in \bN} \|\eta_k\|_{L^\infty(\Omega)} \frac{\eps \cdot 2^{-k}}{(\|\eta_k\|_{C^1(\bar \Omega)} + 1)(1 + \|F\|_{L^\infty(\Omega)}^2)^{\frac{1}{2}}} \leq \eps
	\end{eq*}
	and
	\begin{eq*}
		\|\ddiv((\psi - \psi_\eps)F)\|_{L^2(\Omega)} &\leq \sum_{k \in \bN} \|\eta_k\|_{L^\infty(\Omega)} \|\ddiv((\psi - \psi_{\eps,k})F)\|_{L^2(U_k)}\\
		&\phantom{\leq}~+ \sum_{k \in \bN} \|\nabla \eta_k\|_{L^\infty(\Omega;\bR^d)} \|F\|_{L^\infty(\Omega)} \|\psi - \psi_{\eps,k}\|_{L^2(U_k)}\\
		&\leq (1 + \|F\|_{L^\infty(\Omega)}^2)^{\frac{1}{2}} \sum_{k \in \bN} \|\eta_k\|_{C^1(\bar \Omega)} \|\psi - \psi_{\eps,k}\|_{H(\cL^*, U_k)}\\
		&\leq \eps \cdot (1 + \|F\|_{L^\infty(\Omega)}^2)^{\frac{1}{2}} \sum_{k \in \bN} \frac{2^{-k} \|\eta_k\|_{C^1(\bar\Omega)}}{(1 + \|\eta_k\|_{C^1(\bar \Omega)})(1 + \|F\|_{L^\infty(\Omega)}^2)^{\frac{1}{2}}}\\
		&\leq \eps.
	\end{eq*}
	This proves $\|\psi_\eps\|_{\pfs} \leq \|\psi\|_{\pfs} + \|\psi - \psi_\eps\|_{\pfs} < \infty$ and $\|\psi - \psi_\eps\|_{\pfs} \leq \sqrt{2}\eps$.
	Thus, we have shown that $C^\infty(\Omega) \cap \pfs$ is dense in $\pfs$.
\end{proof}

As we are also interested in boundary values, it is not enough to have the density of smooth functions in the domain.
Therefore, we show next the density of functions that are smooth to the boundary.
%
%
\begin{proof}[Proof of \cref{thm:pfs_smooth_density}\,\cref{enum:pfs_smooth_density_bdry}]
	The proof is split into several steps.
	%
	%
	\paragraph{Step 1.} Consider $x \in \partial \Omega$ with $r > 0$, $Q = Q(x,r)$, $Q' := Q(x, \frac{r}{2})$ such that $\Omega \cap Q = \{y : \gamma(y_1, \dots, y_{d - 1}) < y_d\}$ for a Lipschitz continuous function $\gamma$ in the respective coordinate system.
	Consider a function $\psi \in H(\cL^*, \Omega \cap Q)$ that is zero near $\partial Q' \cap \Omega$.
	Let $\alpha := \Lip(\gamma) + 2$.
	For $y \in Q'$ we define $y^\eps = y + \alpha \eps e_d$ with $e_d$ being the $d$-th unit vector in the coordinate system.
	Then, we get for $y \in \Omega \cap Q'$ that
	\begin{eq*}
		\gamma(y^\eps_1, \dots, y_{d - 1}^\eps) = \gamma(y_1, \dots, y_{d - 1}) < y_d < y_d + \alpha \eps = y_d^\eps
	\end{eq*}
	and for $\eps < \frac{r}{4 \alpha}$ that $|y_j^\eps - x_j| = |y_j - x_j|$ for $j = 1, \dots, d - 1$ and $|y_d^\eps - x_d| \leq |y_d - x_d| + \alpha \eps < \frac{r}{2} + \alpha \frac{r}{2 \alpha} = r$, which implies $y^\eps \in Q$.
	Thus, with $\eps \leq \frac{r}{2\alpha}$ we deduce that for all $y \in \Omega \cap Q'$ we have $y^\eps \in \Omega \cap Q$.
	Moreover, we get $B(y^\eps, \eps) \subseteq \Omega \cap Q$.
	To see this, take $z \in B(y^\eps, \eps)$ and observe
	\begin{eq*}
		\gamma(z_1, \dots, z_{d - 1})
		&\leq \gamma(y^\eps_1, \dots, y_{d - 1}^\eps) + \Lip(\gamma) |z - y^\eps|
		= \gamma(y_1, \dots, y_{d - 1}) + \Lip(\gamma) \eps\\
		&< y_d + \Lip(\gamma) \eps
		< y_d + (\alpha - 1) \eps
		= y_d^\eps - \eps
		< y_d^\eps + (z_d - y_d^\eps)
		= z_d.
	\end{eq*}
	Additionally, we get for $j = 1, \dots, d - 1$ that $|z_j - x_j| \leq |y_j^\eps - x_j| + |z_j - y_j^\eps| = |y_j - x_j| + |z_j - y_j| \leq \frac{r}{2} + \frac{r}{4\alpha} < r$.
	For $j = d$, we get
	\begin{eq*}
		|z_d - x_d| \leq |y_d^\eps - x_d| + |z_d - y_d^\eps| \leq |y_d - z_d| + \alpha \eps + \eps \leq \frac{r}{2} + \alpha \frac{r}{4 \alpha} + \frac{r}{4 \alpha} < r.
	\end{eq*}
	Thus, we deduce $B(y^\eps, \eps) \subseteq \Omega \cap Q$ with $\eps = \frac{r}{4\alpha}$, which is independent of the choice of $y$.
	%
	%
	\paragraph{Step 2.} Take a function $\psi \in H(\cL^*, \Omega \cap Q)$ that vanishes outside $Q'$.
	Let $\widetilde \psi$ be the extension of $\psi$ to $Q$ by zero.
	Set $Q'':= Q(x,\frac{3r}{4})$.
	This yields $\dist(Q'', \partial Q) = \frac{r}{4}$.
	Let the following sequence of functions $\widetilde\psi_\eps \colon Q'' \to \bR$ be defined by
	\begin{eq*}
		\widetilde \psi_\eps(y)
		:= \int_{\bR^d} \rho_\eps(z - y^\eps) \widetilde \psi(z) \, \mathrm{d}z
		= \int_{B(y, \eps)} \rho_\eps(z - y) \widetilde \psi(z + \alpha \eps e_d) \, \mathrm{d}z.
	\end{eq*}
	Since in particular $\widetilde \psi \in L^2(Q'')$, we get $\widetilde \psi_\eps \in C^\infty(Q'')$ for $\eps = \frac{r}{4\alpha} < \frac{r}{4}$.
	Restricting to $\overline{(\Omega \cap Q')}$ guarantees $\psi_\eps := \widetilde \psi_\eps|_{\overline{\Omega \cap Q'}} \in C^\infty(\overline{\Omega \cap Q'})$.
	Moreover, we can show with the same techniques as in the proofs of \cref{lem:comparison,lem:aux_apx} that
	\begin{eq*}
		\psi_\eps \to \psi \text{~in~} H(\cL^*, \Omega \cap Q').
	\end{eq*}
	As $\psi$ vanishes outside $Q'$, we can also guarantee $\psi_\eps$ to vanish outside $Q''$.
	Therefore, we can extend $\psi_\eps$ to $\Omega$ by zero on $\Omega \backslash Q''$ and obtain $\psi_\eps\in C^\infty(\bar \Omega)$.
	%
	%
	\paragraph{Step 3.}
	Since $\partial \Omega$ is a compact set, there exists a finite covering with cubes $Q_k' = Q(x_k, \frac{r_k}{2})$ for $k = 1, \dots, N$ as in Steps 1 and 2.
	Let $(\eta_k)_{k = 0}^N$ be a partition of unity such that $0 \leq \eta_k \leq 1$ for all $k = 1, \dots, N$ on $\Omega$ and $\supp(\eta_k) \subseteq Q_k'$ for $k = 1, \dots, N$ and $\supp(\eta_0) \subseteq \Omega$.
	Take $\eps > 0$ so small that the arguments in the previous steps are applicable.
	For $k = 1, \dots, N$, we apply Step 2 to $\eta_k \psi$ and find $\psi_{\eps, k} \in C^\infty(\bar \Omega)$ with
	\begin{eq*}
		\|\eta_k \psi - \psi_{\eps,k}\|_{H(\cL^*, \Omega)}
		= \|\eta_k \psi - \psi_{\eps,k}\|_{H(\cL^*, \Omega \cap Q_k')} \leq \frac{\eps}{N + 1}.
	\end{eq*}
	As $\eta_0 \psi \in H(\cL^*, \Omega)$ and $\supp \eta_0 \subseteq \Omega$, we can find by \cref{lem:aux_apx} a function $\psi_{\eps,0} \in C^\infty(\bar \Omega)$ with $\| \eta_0 \psi - \psi_{\eps,0}\|_{H(\cL^*, \Omega)} \leq \frac{\eps}{N + 1}$.
	We define
	\begin{eq*}
		\psi_\eps := \sum_{k = 0}^N \psi_{\eps,k}.
	\end{eq*}
	As this sum is finite, we get $\psi \in C^\infty(\bar \Omega)$ and
	\begin{eq*}
		\|\psi - \psi_\eps\|_{H(\cL^*,\Omega)}
		\leq \sum_{k = 0}^N \|\eta_k \psi - \psi_{\eps,k}\|_{H(\cL^*,\Omega)}
		\leq \eps.
	\end{eq*}
	This proves the assertion of \cref{thm:pfs_smooth_density}\,\cref{enum:pfs_smooth_density_bdry}.
\end{proof}

After showing the density of smooth functions in $\pfs$, we address next the subspace $\pfsz$.
In contrast to the previous constructive approach, we aim at an indirect approach as in the proof of \cite[Theorem 2.4]{bib:GiraultRaviart_FEMNavierStokes}.
For this sake, we use the following result.
%
%
\begin{thm}[cf. {\cite[Eq.\ (2.14)]{bib:GiraultRaviart_FEMNavierStokes}}]\label{thm:density_indirect}
	Consider a Banach space $X$ and a subspace $D \subseteq X$.
	Then $D$ is dense in $X$, if and only if all functionals in $X^*$ that vanish on $D$ also vanish on~$X$.
\end{thm}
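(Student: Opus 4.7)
The plan is to prove the two implications of this equivalence separately, with the nontrivial direction reduced to a standard application of the Hahn--Banach theorem. Throughout, write $\bar D$ for the closure of $D$ in $X$.

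For the forward direction, assume $D$ is dense in $X$ and let $f \in X^*$ satisfy $f|_D = 0$. Given any $x \in X$, by density there is a sequence $(x_n)_{n \in \bN} \subseteq D$ with $x_n \to x$. Continuity of $f$ gives $f(x) = \lim_{n \to \infty} f(x_n) = 0$, so $f$ vanishes on all of $X$. This step is entirely routine.

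For the converse, I would proceed by contraposition: assume $D$ is \emph{not} dense, i.e., $\bar D \subsetneq X$, and exhibit a functional in $X^*$ that vanishes on $D$ but not on $X$. Pick $x_0 \in X \setminus \bar D$. Since $\bar D$ is a closed subspace and $\{x_0\}$ is a disjoint compact convex set, the geometric Hahn--Banach theorem (or, equivalently, its corollary on extending a functional defined on $\bar D \oplus \mathrm{span}(x_0)$ by sending $x_0 \mapsto 1$ and $\bar D \mapsto 0$, then extending norm-preservingly to all of $X$) yields an $f \in X^*$ with $f|_{\bar D} = 0$ and $f(x_0) = 1$. In particular $f|_D = 0$, but $f \not\equiv 0$ on $X$, contradicting the hypothesis. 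Hence $D$ must have been dense.

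The main conceptual ingredient, and the only potential obstacle, is invoking Hahn--Banach correctly in the contrapositive step; everything else is a direct continuity argument. Since the Banach space $X$ in the intended application (namely $\pfs$ or $L^2_{\bC}(\Omega)$) is in fact a Hilbert space, one could alternatively argue via orthogonal projection onto the closed subspace $\bar D$ in that specific setting, but for the general Banach space formulation stated here, Hahn--Banach is the natural tool.
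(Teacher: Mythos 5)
Your proof is correct: the forward direction by continuity and the converse via Hahn--Banach (annihilating the closed proper subspace $\bar D$ while taking the value $1$ at some $x_0 \notin \bar D$) is exactly the standard argument behind this result, which the paper does not prove itself but simply imports from Girault--Raviart. No gaps; the alternative orthogonal-projection remark for the Hilbert-space applications is a fine observation but not needed.
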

%
%
\begin{proof}[Proof of \cref{thm:pfs_smooth_density}\,\cref{enum:pfsz_smooth_density}]
	We use \cref{thm:density_indirect}.
	Consider a functional $\xi \in \pfsz^*$ that vanishes on $C_0^\infty(\Omega)$.
	Since $\pfsz$ is a Hilbert space, there exist $u, v \in L^2(\Omega)$ that fulfill
	\begin{eq*}
		\langle \xi, \psi \rangle_{\pfsz^*, \pfsz} = (u,\psi)_{L^2(\Omega)} + (v, \ddiv(\psi F))_{L^2(\Omega)} \text{~for all~} \psi \in \pfsz
	\end{eq*}
	with $\ddiv(u F) = v$.
	Testing with arbitrary $\varphi \in C_0^\infty(\Omega)$, we get
	\begin{eq*}
		0
		&= \langle \xi , \varphi \rangle
		= (u, \varphi)_{L^2(\Omega)} + (v, \ddiv(\varphi F))_{L^2(\Omega)}\\
		&= (u, \varphi)_{L^2(\Omega)} + (v F, \nabla \varphi)_{L^2(\Omega; \bR^d)} + (\ddiv(F) v, \varphi)_{L^2(\Omega)}.
	\end{eq*}
	This implies that $v \in H(\cL^*,\Omega)$ with $\ddiv(v F) = u + \ddiv(F) v$.
	By \cref{thm:pfs_smooth_density}\,\cref{enum:pfs_smooth_density_bdry}, there exists a sequence $(v_n)_{n \in \bN}$ with $ v_n \in C^\infty(\bar\Omega)$ and $v_n \to v$ in $\pfs$.
	Taking $\psi \in \pfsz$ we get by \cref{thm:green_pfs_h1} that
	\begin{eq*}
		(v_n,\ddiv(\psi F))_{L^2(\Omega)}
		= -(\ddiv(v_n F), \psi)_{L^2(\Omega)} + (\ddiv F \psi, w)_{L^2(\Omega)} + \langle \tr_{F \nu}(\psi) , \tr(v_n) \rangle.
	\end{eq*}
	The last term is zero since $\psi \in \pfsz$.
	The application of this formula yields
	\begin{eq*}
		\langle \xi , \psi \rangle &= (u,\psi)_{L^2(\Omega)} + (v, \ddiv(\psi F))_{L^2(\Omega)} = (u,\psi)_{L^2(\Omega)} + \lim_{n \to \infty} (v_n,\ddiv(\psi F))_{L^2(\Omega)}\\
		&= (u,\psi)_{L^2(\Omega)} + \lim_{n \to \infty}\left( -(\ddiv(v_n F),\psi)_{L^2(\Omega)} + (\ddiv(F) \psi, v_n)_{L^2(\Omega)} \right)\\
		&= (u,\psi)_{L^2(\Omega)} - (\ddiv(v F), \psi)_{L^2(\Omega)} + (\ddiv(F) v, \psi)_{L^2(\Omega)} = 0.
	\end{eq*}
	Hence, $\xi$ vanishes on $\pfsz$.
	Together with \cref{thm:density_indirect} this proves the assertion.
\end{proof}

\end{document}